\newcommand{\eps}{\varepsilon}
\newcommand{\R}{\mathbf{R}}
\newcommand{\C}{\mathbf{C}}
\newcommand{\LV}{\mathrm{LV}}
\newcommand{\ii}{{\mathrm{i}}}
\newcommand{\A}{\mathrm{A}}
\newcommand{\Energy}{\mathcal{E}}
\declaretheorem{theorem}
\declaretheorem[sibling=theorem]{proposition}
\declaretheorem[sibling=theorem]{corollary}
\declaretheorem[sibling=theorem]{remark}
\declaretheorem[sibling=theorem]{lemma}
\declaretheorem[sibling=theorem]{definition}
\declaretheorem[sibling=theorem]{example}
\declaretheorem[sibling=theorem]{conjecture}
\declaretheorem[sibling=theorem]{heuristic}
\title[Exponent pairs and zero density estimates]{New exponent pairs, zero density estimates, and zero additive energy estimates: a systematic approach}
\author[T. Tao]{Terence Tao}
\address{UCLA Department of Mathematics, Los Angeles, CA 90095-1555.}
\email{tao@math.ucla.edu}
\thanks{Supported by NSF grant DMS-2347850.}
\author[T. S. Trudgian]{Tim Trudgian}
\address{School of Science, The University of New South Wales, Canberra, Australia}
\email{timothy.trudgian@unsw.edu.au}
\thanks{Supported by ARC grant DP240100186.}
\author[A. Yang]{Andrew Yang}
\address{School of Science, The University of New South Wales, Canberra, Australia}
\email{andrew.yang1@unsw.edu.au}
\begin{document}
\date\today
\subjclass[2010]{Primary: 11L07, 11M06, 11T23.}
\keywords{Exponent pair, exponetial sums, Riemann zeta-function, zero-density estimate, additive energy, computer assisted proof.}

\begin{abstract}
    We obtain several new bounds on exponents of interest in analytic number theory, including four new exponent pairs, new zero density estimates for the Riemann zeta-function, and new estimates for the additive energy of zeroes of the Riemann zeta-function.  These results were obtained by creating the \emph{Analytic Number Theory Exponent Database} (ANTEDB) to collect results and relationships between these exponents, and then systematically optimising these relationships to obtain the new bounds.  We welcome further contributions to the database, which aims to allow easy conversion of new bounds on these exponents into optimised bounds on other related exponents of interest.
    \end{abstract}
\maketitle
\section{Introduction}
In this paper we introduce the \emph{Analytic Number Theory Exponent Database} (ANTEDB)\footnote{\url{https://teorth.github.io/expdb/}.}, which aims to be a living database that systematically collects, abstracts, develops code for, and optimises results and relations for, a number of exponents that arise in analytic number theory.   By an \emph{exponent}, we mean one or more real numbers, possibly depending on other exponent parameters, that occur as an exponent in an analytic number theory estimate, for instance as the exponent in some scale parameter $T$ that bounds some other quantity of interest.

For the initial launch of the ANTEDB, we focused on exponents relating to the Riemann zeta-function $\zeta(s)$. More specifically, in this paper we shall restrict attention to the exponents listed in the following table.

\begin{center}
    \begin{tabular}{lll}
        Exponent & Informal description & Key conjecture \\
        \hline
        $(k,\ell)$ & Exponent pairs & Exponent pair conj. \\
        $\beta(\alpha)$ & Dual function to exponent pairs & Exponent pair conj. \\
        $\mu(\sigma)$ & Growth exponent for $\zeta$ & Lindel\"{o}f Hypothesis \\
        $\LV(\sigma,\tau)$ & Large value exponents & Montgomery conj. \\
        $\LV_\zeta(\sigma,\tau)$ & Large value exponents for $\zeta$ & Lindel\"{o}f Hypothesis \\
        $\A(\sigma)$ & Zero density exponent for $\zeta$ & Density Hypothesis \\
        $\LV^*(\sigma,\tau)$ & Large value additive energy exp. & --- \\
        $\LV^*_\zeta(\sigma,\tau)$ & Large value additive energy exp. for $\zeta$ & Lindel\"{o}f Hypothesis \\
        $\A^*(\sigma)$ & Zero density additive energy exp. for $\zeta$ & --- \\
        $\Energy$ & Energy / double zeta sum tuples & --- \\
        $\Energy_\zeta$ & Energy / double zeta sum tuples for $\zeta$ & --- \\
        \hline
    \end{tabular}
\end{center}

We will review the definitions of these exponents in Section \ref{notation-sec}.  The exponents $(k,\ell)$, $\mu(\alpha)$, $\A(\sigma)$, (and to a lesser extent, $\beta(\alpha)$) are well known\footnote{There are some minor variations in notation in the literature regarding whether to permit ``epsilon losses'' or to define the class of phase functions to which exponent pairs apply.  See \Cref{epsilon-loss} and Example \ref{phase-ex} below.} and extensively studied as explicit quantities of interest in many places in the literature (e.g., in \cite{ivic}).  However, many of the other exponents in the above table are only studied \emph{implicitly}, with specific values or bounds on these exponents often appearing in auxiliary estimates  used to control one of the more explicitly studied exponents listed above.  One of the objectives of the ANTEDB is to make these exponents more explicit, and to identify and abstract the relationships between them that often only appear in the literature in special cases (for instance, with certain parameters set to specific numerical values).

The ANTEDB also contains some preliminary data on several other related exponents, such as those relating to gaps between primes. However these are less developed at present, and we will not report on them in depth here.

There is a vast literature on bounding these various exponents, and on relating them to each other.  One  resource for this literature is the text of Ivi\'{c} \cite{ivic}, and many of the most recent bounds may be found in a paper \cite{trudgian-yang} by the second and third authors.  However, due to a steady stream of new results in the field (for instance, the spectacular zero density estimates of Guth and Maynard \cite{guth-maynard}), any given list of bounds in the literature will eventually become out of date.  Furthermore, many of the key relations between exponents are expressed in the literature in a way that is tailored to the best numerical exponents available at the time of publication, rather than in a more general and abstract fashion that could be easily applied to subsequent work that can utilise improved exponent bounds.

To address these issues, we have attempted not only to collect all known bounds on the above exponents in the ANTEDB, but also to ``unpack'' the proofs of these exponents by abstracting out general relations between exponents that are implicit in the literature, but not stated in the most general form suitable for future applications.  A simple and well known example is the abstraction of the functional equation of the Riemann zeta-function as a symmetry relation
\begin{equation}\label{mu-func}
    \mu(1-\sigma) = \mu(\sigma) + \sigma - 1/2
\end{equation}
for the growth rates
\begin{equation}\label{mu-def}
    \mu(\sigma) \coloneqq \limsup_{t \to \infty} \frac{\log |\zeta(\sigma+it)|}{\log t}
\end{equation}
of the Riemann zeta-function (see, e.g., \cite[(1.23), (1.25)]{ivic}).  A more implicit, but also well known, example is the technique of \emph{Huxley subdivision}, which is a commonly used method in the theory of zero density estimates for extending large value estimates on short intervals to large value estimates on long intervals.  By introducing the large value exponent $\LV(\sigma,\tau)$, which we define in \Cref{lv-def}, this subdivision method can be abstracted as the upper Lipschitz bound
\begin{equation*}\label{subdivision}
 \LV(\sigma,\tau') \leq \LV(\sigma,\tau) + \tau' - \tau
\end{equation*}
whenever $1/2 \leq \sigma \leq 1$ and $\tau \leq \tau'$; see \Cref{hux-sub}.  By identifying these abstract relations, one can automate some of the more ``routine'' processes in the literature, such as the ``standard arguments'' appearing in \cite{guth-maynard} to pass from large value theorems to zero density estimates (and thence to bounds on gaps between primes).

Such abstract relations combine to produce a complicated network of implications, which often hinders manual optimisation an exponent bound given a set of known results. For instance, an exponent pair $(k, \ell)$ (defined in Definition \ref{exp-pair-def}) may be implied by other exponent pairs via a number of processes, as well as bounds on various exponential sums that themselves depend on exponent pairs. However, this paradigm appears to be well-suited for machine-assisted optimisation. In ANTEDB we implement a python module that implements the set of known results and their relations, as well as routines to find new results implied by a set of hypotheses. A number of results recorded in this article were found in this way. 

Thus, each theorem and conjecture in ANTEDB is represented both in human-readable form as well as python code. As the literature of exponent bounds grows, one may increasingly rely on machine-assistance to  maintain an up-to-date record of the best known bounds of various exponents, which may either be explicitly derived or implied by novel upstream results. The aim of the ANTEDB python module is to abstract away the tedious task of computing the consequences of a novel intermediate result, thereby freeing up resources for researchers. For instance, if a new large value estimate is discovered, one may use the ANTEDB python module to find efficiently the best-possible new zero density estimates that are implied, and compare these to zero density estimates already in the database.

\begin{remark}[General note on notation]\label{epsilon-loss}  Throughout this paper, we adopt notational conventions for exponents that allow for ``epsilon losses'' in the estimates.  This is done in order to simplify the presentation of the results (which would otherwise require various additional $\eps$ terms), and for compatibility with the code in the ANTEDB, in which exponents are frequently stored as rational numbers to facilitate exact arithmetic.  Several of our the results  may permit one or more of these epsilons to be replaced with more precise losses such as a logarithmic loss, or even with a completely ``log-free'' estimate, but we have not pursued these refinements here or in the ANTEDB.  It is possible however that a future expansion of the ANTEDB could also address these more refined questions.
\end{remark}

\subsection{New results}

As a demonstration of the effectiveness of this systematic approach, we were able to obtain new exponent bounds purely from combining and optimising existing methods, without the introduction of new analytic number theory inputs.  The discovery of these bounds was very much computer assisted; however, in many cases we were able to extract human-readable proofs from these computations, which we will present in this paper.  We expect that future improvements on these exponents coming from new techniques in analytic number theory can be similarly combined with the machinery in the ANTEDB to allow these improvements to propagate in an optimised and largely automated fashion.

Our new results can be summarised as follows:
\begin{itemize}
\item Four new exponent pairs (\Cref{new-exp-pair});
\item Several new zero density estimates (\Cref{zero_density_estimates_table}); and
\item New estimates for the additive energy of zeroes (see \Cref{Add-est}).
\end{itemize}
\subsection{Future directions}

We welcome contributions from the analytic number theory community to maintain and expand the ANTEDB.  Some natural directions of expansion could include recording analogous exponents for various $L$-functions, or to record estimates that only lose logarithmic factors rather than epsilon exponent losses, or even ``log-free'' or explicit estimates.  In the latter aspect, it may eventually be possible to unify this database with the Th\'eorie Multiplicative Explicite des nombres / Explicit Multiplicative Number Theory website at \url{https://tmeemt.github.io/Chest/}.

Currently, the ANTEDB python module performs computations using routines that are not formally certified to be error-free.  A natural future direction of the project would be to incorporate formal verification in languages such as \emph{Lean}.  Completely formalizing the estimates in the ANTEDB would be a significant challenge\footnote{For instance, the ongoing ``Prime Number Theorem and more'' project at \url{https://github.com/AlexKontorovich/PrimeNumberTheoremAnd}, launched in January 2024, took approximately four months to formalise the prime number theorem with qualitative error term, with the classical error term bound still a work in progress at this time of writing.}; however, several conditional calculations deriving one exponent from other exponents in the literature could conceivably be formalised within the ANTEDB.

\section{Notation}

\subsection{Asymptotic notation}\label{notation-sec}

It is convenient to use a ``cheap non-standard analysis'' framework for asymptotic notation, in the spirit of \cite{tao-cheap}, as this will reduce the amount of ``epsilon management'' one has to do in the arguments.  This framework is inspired by non-standard analysis, but we will avoid explicitly using non-standard constructions such as ultraproducts in the discussion below, relying instead on the more familiar notion of sequential limits.  Any result stated here using this non-standard framework can be converted without difficulty to a result in standard ``epsilon-delta'' form; see for instance \Cref{beta-asymp} for a typical example.

In this framework, we assume there is some ambient index parameter $\ii$, which ranges over some ambient sequence of natural numbers going to infinity.  All mathematical objects $X$ (numbers, sequences, sets, functions, etc.), will either be \emph{fixed} --- i.e., independent of $\ii$ --- or \emph{variable}\footnote{These correspond to the notions of \emph{standard} and \emph{non-standard} objects in non-standard analysis.} - i.e., dependent on $\ii$.    Of course, fixed objects can be considered as special cases of variable objects, in which the dependency is constant.  By default, objects should be understood to be variable if not explicitly declared to be fixed. For emphasis, we shall sometimes write $X = X_{\ii}$ to indicate explicitly that an object $X$ is variable; however, to reduce clutter, we shall generally omit mention of the parameter $\ii$ in most of our arguments. We will often reserve the right to refine the ambient sequence to a subsequence as needed, usually in order to apply a compactness theorem such as the Bolzano--Weierstrass theorem; we refer to this process as ``passing to a subsequence if necessary''.  When we say that a statement involving variable objects is true, we mean that it is true for all $\ii$ in the ambient sequence.
For instance, a variable set $E$ of real numbers is a set $E = E_{\ii}$ indexed by the ambient parameter $\ii$, and by an element of such a set, we mean a variable real number $x = x_{\ii}$ such that $x_{\ii} \in E_{\ii}$ for all $\ii$ in the ambient sequence.

We isolate some special types of variable numerical quantities $X = X_{\ii}$ (which could be a natural number, real number, or complex number):
\begin{itemize}
\item $X$ is \emph{bounded} if there exists a fixed $C$ such that $|X| \leq C$. In this case we also write $X = O(1)$.
\item $X$ is \emph{unbounded} if $|X_{\ii}| \to \infty$ as $\ii \to \infty$; equivalently, for every fixed $C$, one has $|X| \geq C$ for $\ii$ sufficiently large.
\item $X$ is \emph{infinitesimal} if $|X_{\ii}| \to 0$ as $\ii \to \infty$; equivalently, for every fixed $\eps>0$, one has $|X| \leq \eps$ for $\ii$ sufficiently large. In this case we also write $X = o(1)$.
\end{itemize}

Note that any quantity $X$ will be either bounded or unbounded, after passing to a subsequence if necessary; similarly, by the Bolzano--Weierstrass theorem, any bounded (variable) quantity $X$ will be of the form $X_0+o(1)$ for some fixed $X_0$, after passing to a subsequence if necessary.  Thus, for instance, if $T, N > 1$ are (variable) quantities with $N = T^{O(1)}$ (or equivalently, $T^{-C} \leq N \leq T^C$ for some fixed $C$), then, after passing to a subsequence if necessary, we may write $N = T^{\alpha+o(1)}$ for some fixed real number $\alpha$.  Note that any further passage to subsequences does not affect these concepts; quantities that are bounded, unbounded, or infinitesimal remain so under any additional restriction to subsequences.

We observe the \emph{underspill principle}: if $X,Y$ are (variable) real numbers, then the relation
$$ X \leq Y + o(1)$$
is equivalent to the relation
$$ X \leq Y + \eps + o(1)$$
holding for all fixed $\eps>0$.

We can develop other standard asymptotic notation in the natural fashion: given two (variable) quantities $X,Y$, we write $X = O(Y)$, $X \ll Y$, or $Y \gg X$ if $|X| \leq CY$ for some fixed $C$, and $X = o(Y)$ if $|X| \leq cY$ for some infinitesimal $c$.  We also write $X \asymp Y$ for $X \ll Y \ll X$.

A convenient property of this asymptotic formalism, analogous to the property of \emph{$\omega$-saturation} in non-standard analysis, is that certain asymptotic bounds are automatically uniform in variable parameters.

\begin{proposition}[Automatic uniformity]\label{auto}  Let $E = E_{\ii}$ be a non-empty variable set, and let $f = f_{\ii}: E \to \C$ be a variable function.  \begin{itemize}
    \item[(i)] Suppose that $f(x)=O(1)$ for all (variable) $x \in E$.  Then after passing to a subsequence if necessary, the bound is uniform, that is to say, there exists a fixed $C$ such that $|f(x)| \leq C$ for all $x \in E$.
    \item[(ii)] Suppose that $f(x)=o(1)$ for all (variable) $x \in E$.  Then after passing to a subsequence if necessary, the bound is uniform, that is to say, there exists an infinitesimal $c$ such that $|f(x)| \leq c$ for all $x \in E$.
\end{itemize}
\end{proposition}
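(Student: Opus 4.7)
The plan is to handle both parts by contradiction via a diagonalization argument, using the axiom of (countable) choice to build a single variable element $x \in E$ that witnesses failure of the uniform conclusion whenever it fails. The key framework-level fact being exploited is that being $O(1)$ or $o(1)$ is preserved under passing to subsequences, so we are free to thin the ambient sequence at will, and hypotheses of the form ``$f(x) = O(1)$ for every variable $x \in E$'' transfer automatically to subsequences (any variable $x$ defined on the subsequence extends to one on the original sequence, using nonemptiness of the $E_\ii$).

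For (i), I would introduce the variable quantity $M := \sup_{x \in E} |f(x)| \in [0,\infty]$ and argue that $M = O(1)$ after passing to a subsequence, which is exactly the desired uniform bound. Indeed, if this failed then after passing to a subsequence I could arrange $M_\ii \to \infty$ (or $M_\ii = \infty$ throughout). Invoking choice, I would select, for each $\ii$, an element $x_\ii \in E_\ii$ with $|f_\ii(x_\ii)| \geq \min(M_\ii/2, \ii)$; this is possible by definition of supremum. The resulting variable $x \in E$ satisfies $|f(x)| \to \infty$, contradicting the hypothesis $f(x) = O(1)$ applied to this particular selection.

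For (ii), I would first observe that $o(1)$ implies $O(1)$, so part (i) already reduces matters to the case $M_\ii \leq C$ for some fixed $C$ after a preliminary subsequence extraction. By Bolzano--Weierstrass, a further subsequence yields $M_\ii = M_0 + o(1)$ for some fixed $M_0 \in [0,C]$. If $M_0 > 0$, I could choose $x_\ii \in E_\ii$ with $|f_\ii(x_\ii)| \geq M_0/2$ for all sufficiently large $\ii$, producing a variable $x \in E$ with $f(x) \neq o(1)$, contradicting the hypothesis. Hence $M_0 = 0$, i.e.\ $M = o(1)$, and then the infinitesimal $c := M$ furnishes the desired uniform bound $|f(x)| \leq c$ for all $x \in E$.

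The main obstacle is essentially bookkeeping rather than mathematical depth: one must verify that the selections $\ii \mapsto x_\ii$ define legitimate variable elements of $E$ (invoking choice and nonemptiness) and that the hypothesis survives the successive subsequence refinements. No analytic input beyond Bolzano--Weierstrass is needed; the statement is essentially the ``$\omega$-saturation'' property alluded to in the paragraph preceding the proposition.
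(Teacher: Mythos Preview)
Your proof is correct and follows essentially the same diagonalization-by-contradiction strategy as the paper: in both cases one manufactures a single variable $x \in E$ witnessing the failure of uniformity, contradicting the hypothesis. Your packaging via the supremum $M = \sup_{x\in E}|f(x)|$ and the appeal to Bolzano--Weierstrass in part (ii) is a mild stylistic variant of the paper's more direct ``for each $n$, eventually $\sup \le 1/n$'' argument, but the underlying idea is identical.
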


\begin{proof} We begin with (i).  Suppose that there is no uniform bound.  Then for any fixed natural number $n$, one can find arbitrarily large $\ii_n$ in the sequence and $x_{\ii_n} \in E_{\ii_n}$ such that $|f_{\ii_n}(x_{\ii_n})| \geq n$.  Clearly one can arrange matters so that the sequence $\ii_n$ is increasing.  If one then restricts to this sequence and sets $x$ to be the variable element $x_{\ii_n}$ of $E$, then $f(x)$ is unbounded, a contradiction.

Now we prove (ii).  We can assume for each fixed $n$ that there exists $\ii_n$ in the ambient sequence such that $|f_{\ii}(x_{\ii})| \leq 1/n$ for all $\ii \geq \ii_n$ and $x_{\ii} \in E_{\ii}$, since if this were not the case one can construct an $x = x_{\ii} \in E$ such that $|f_{\ii}(x_{\ii})| \geq 1/n$ for $\ii$ sufficiently large, contradicting the hypothesis.  Again, we may take the $\ii_n$ to be increasing.  Restricting to this sequence, and writing $c_{\ii_n} \coloneqq 1/n$, we see that $c=o(1)$ and $|f(x)| \leq c$ for all $x \in E$, as required.
\end{proof}

\begin{remark} It is important in Proposition \ref{auto} that the hypotheses in (i), (ii) are assumed for all \emph{variable} $x \in E$, rather than merely the \emph{fixed} $x \in E$.  For instance, let $E = \R$ and consider the variable function $f_{\ii}(x) \coloneqq x/\ii$.  Then $f(x) = o(1)$ for any fixed $x \in E$, but the decay rate is not uniform, and we do not have $f(x) = o(1)$ for all variable $x \in E$ (e.g., $x_\ii \coloneqq \ii$ is a counterexample).
\end{remark}

\begin{remark} There are two caveats with this asymptotic formalism.  First, the law of the excluded middle is only valid after passing to subsequences.  For instance, it is possible for a non-standard natural number to be neither even nor odd, since it could be even for some $\ii$ and odd for others.  However, one can pass to a subsequence in which it becomes either even or odd.  Second, one cannot combine the ``external'' concepts of asymptotic notation with the ``internal'' framework of (variable) set theory.  For instance, one cannot view the collection of all bounded (variable) real numbers as a variable set, since the notion of boundedness is not ``pointwise'' to each index $\ii$, but instead describes the ``global'' behavior of this index set.  Thus, for instance, set builder notation such as $\{ x: x = O(1) \}$ should be avoided.
\end{remark}

\subsection{Phase functions}

As a first use of this asymptotic notation, we define our notion of phase functions used in the concept of an exponential pair.  For simplicity we restrict attention to phase functions that are (asymptotically) ``monomial functions'' in the sense of \cite[Chapter 3]{huxley_area_1996}, although many of the results discussed here can likely be generalised to larger classes of phase functions.

\begin{definition}[Phase function]\label{phase-def}
    A \emph{phase function} is a (variable) smooth function $F \colon [1,2] \to \R$.  A phase function $F$ will be called a \emph{model phase function} if there exists a fixed exponent $\sigma > 0$ with the property that
    \begin{equation}\label{fpu}
    F^{(p+1)}(u) - \frac{d^p}{du^p} u^{-\sigma} = o(1)
    \end{equation}
    for all (variable) $u \in [1,2]$ and all fixed $p \geq 0$, where $F^{(p+1)}$ denotes the $(p+1)^{\mathrm{st}}$ derivative of $F$.
    \end{definition}

    \begin{example}\label{phase-ex} The function $u \mapsto \log u$ is a model phase function (with $\sigma=1$), and for any fixed $\sigma \neq 1$, $u \mapsto u^{1-\sigma}/(1-\sigma)$ is also a model phase function.  Informally, a model phase function is a function which asymptotically behaves like $u \mapsto \log u$ (for $\sigma = 1$) or $u \mapsto u^{1-\sigma}/(1-\sigma)$ (for $\sigma \neq 1$), up to constants.  This turns out to be a good class for exponential sum estimates, as it is stable under Weyl differencing and Legendre transforms, which show up in the van der Corput A-process and B-process respectively.  Results on exponential pairs can often extend to more general classes of phase functions, but this class is sufficient for applications to the Riemann zeta-function, and so we will not study any class beyond the model phase function class here or in the ANTEDB.  (However, such generalisations could be addressed in a potential future expansion of the ANTEDB.)
    \end{example}

    Note from Proposition \ref{auto} that the $o(1)$ decay rate in \eqref{fpu} can be made uniform, after passing to a subsequence if necessary.

\subsection{Additive energy}\label{add-sec}

Additive energy is a concept that appears implicitly in the work of Heath-Brown \cite{heath_brown_consecutive_II}, although the terminology of additive energy\footnote{Strictly speaking, only the scale zero energy $E_0(W)$ was introduced in \cite{tao-vu}, but the concepts are related.  For instance, if $W'$ is the (multi-)set formed from $W$ by rounding to the nearest integer, then $E_1(W)$ can be shown to be comparable to $E_0(W')$, as can be seen from \eqref{energy-asymp} below.} first appears in \cite{tao-vu}.  As observed in that paper, controlling the additive energy of zeroes of the Riemann zeta-function has implications for the distributions of primes in short intervals.  Additive energy of large values of Dirichlet series also played a key role in the recent zero density estimates in \cite{guth-maynard}.

\begin{definition}[Additive energy]\label{energy-def}  Let $W$ be a finite multi-set of real numbers.  The \emph{additive energy} $E_1(W)$ of such a set is defined to be the number of quadruples $(t_1,t_2,t_3,t_4) \in W$ such that
    $$    |t_1 + t_2 - t_3 - t_4| \leq 1.$$
\end{definition}

One could more generally define $E_r(W)$ for any $r>0$ to be the number of quadruples $(t_1,t_2,t_3,t_4) \in W$ such that
$$    |t_1 + t_2 - t_3 - t_4| \leq r,$$
but from \cite[Lemma 3.1]{cladek-tao} we have the comparability relation
\begin{equation}\label{energy-asymp}
    E_1(W) \asymp E_r(W) \asymp \bigg\| \sum_{t \in W} 1_{[t-r,t+r]} \bigg\|_{U^2}^4
\end{equation}
for any fixed $r>0$, where the \emph{Gowers uniformity norm} $\|f\|_{U^2}$ was defined as
$$\|f\|_{U^2}^4 \coloneqq \int_\R \int_\R \int_\R f(x) \bar{f}(x+h) \bar{f}(x+k) f(x+h+k)\ dx\ dh\ dk.$$
As a consequence, one has a quasi-triangle inequality
\begin{equation}\label{energy-triangle}
    E_1\left(\bigcup_{j=1}^J W_j\right)^{1/4} \ll \sum_{j=1}^J E_1(W_j)^{1/4}
\end{equation}
for any (multi-)sets $W_1,\dots,W_J \subset \R$.

\subsection{Double zeta sums}

Double zeta sums are a type of exponential sum that arise frequently in the study of large value theorems and zero density estimates.
Given a (multi-)set $W$ and a scale $N>1$, let $S(N,W)$ denote the \emph{double zeta sum}
\begin{equation*}\label{snw-def}
     S(N,W) \coloneqq \sum_{t,t' \in W} \bigg|\sum_{n \in [N,2N]} n^{-i(t-t')} \bigg|^2.
\end{equation*}
We caution that this normalisation differs from the one in \cite{ivic}, where $n^{-1/2-i(t-t')}$ is used in place of $n^{-i(t-t')}$.  This sum may also be rearranged as
\begin{equation}\label{snw}
 S(N,W) = \sum_{n,m \in [N,2N]} |R_W(n/m)|^2
\end{equation}
where $R_W$ is the exponential sum
$$ R_W(x) \coloneqq \sum_{t \in W} x^{it}.$$
From this formulation of the double zeta sum, we have the triangle inequality
\begin{equation*}\label{snw-triangle}
    S\left(N,\bigcup_{j=1}^J W_j\right)^{1/2} \leq \sum_{j=1}^J S(N,W_j)^{1/2}
\end{equation*}
for any (multi-)sets $W_1,\dots,W_J \subset \R$.

\begin{remark} In \cite[Section 2]{guth-maynard}, triple zeta sums of the form
$$ \sum_{t_1,t_2,t_3 \in W} \sum_{n_1,n_2,n_3 \in [N,2N]} n_1^{i(t_1-t_2)} n_2^{i(t_2-t_3)} n_3^{i(t_3-t_1)}$$
play a key role in the arguments (actually for technical reasons some diagonal portions of this sum, in which two of the $t_1,t_2,t_3$ are close to each other, are removed).  In principle, estimates on such sums could also be systematically collected by the ANTEDB, but we have not attempted to do this.
\end{remark}

\section{Exponential sum exponents, and exponent pairs}

We now use the concept of a phase function to define the exponent sum growth exponent function $\beta(\alpha)$.

    \begin{definition}[Exponent sum growth exponent]\label{beta-def}  For any fixed $\alpha \geq 0$, let $\beta(\alpha) \in \R$ denote the least possible (fixed) exponent for which the following claim holds: whenever $N, T \geq 1$ are (variable) quantities with $T$ unbounded and $N = T^{\alpha+o(1)}$, $F$ is a model phase function, and $I \subset [N, 2N]$ is an interval, then
        $$ \sum_{n \in I} e(T F(n/N)) \ll T^{\beta(\alpha)+o(1)}.$$
    \end{definition}

    It is easy to see that the set of possible candidates for $\beta(\alpha)$ is closed (thanks to underspill), non-empty, and bounded from below, so $\beta$ is well-defined as a (fixed) function from $[0,+\infty)$ to $\R$.  Specializing to the logarithmic phase $F(u) = \log u$, and performing a complex conjugation, we see in particular that
\begin{equation*}\label{beta-alpha}
    \sum_{n \in I} n^{-iT} \ll T^{\beta(\alpha)+o(1)}
\end{equation*}
whenever $T$ is unbounded, $N = T^{\alpha+o(1)}$, and $I$ is an interval in $[N,2N]$.  Thus it is clear that knowledge of $\beta$ is  relevant to understanding the Riemann zeta-function.

The quantity $\beta(\alpha)$ can also be formulated without asymptotic notation, but at the cost of introducing some ``epsilon and delta'' parameters:

\begin{lemma}[Non-asymptotic definition of $\beta$]\label{beta-asymp}  Let $\alpha \geq 0$ and $\overline{\beta} \in \R$ be fixed.  Then the following are equivalent:
\begin{itemize}
\item[(i)] $\beta(\alpha) \leq \overline{\beta}$.
\item[(ii)] For every (fixed) $\eps>0$ and $\sigma > 0$ there exists (fixed) $\delta>0$, $P \geq 1$, $C \geq 1$ with the following property: if $T \geq C$, $T^{\alpha-\delta} \leq N \leq T^{\alpha+\delta}$ are (fixed) real numbers, $I \subset [N,2N]$ is a (fixed) interval, and $F$ is a (fixed) phase function such that
\begin{equation}\label{fpu-bound}
\bigg|F^{(p+1)}(u) - \frac{d^p}{du^p} u^{-\sigma}\bigg| \leq \delta
\end{equation}
for all (fixed) $0 \leq p \leq P$ and $u \in [1,2]$, then
$$ \bigg|\sum_{n \in I} e(T F(n/N))\bigg| \leq C T^{\overline{\beta}+\eps}.$$
\end{itemize}
\end{lemma}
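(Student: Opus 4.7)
The plan is to prove the two directions separately, using the automatic uniformity principle (Proposition \ref{auto}) and the underspill principle to bridge the non-standard and standard formulations. Both directions are essentially formal translations: (i) is the ``$\ii$-indexed'' packaging of (ii), and vice versa.

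For the direction (ii) $\Rightarrow$ (i), I would take variable data $T, N, F, I$ satisfying the hypotheses of Definition \ref{beta-def}; to show $\sum_{n \in I} e(TF(n/N)) \ll T^{\overline{\beta}+o(1)}$, it suffices by underspill to verify that for every fixed $\eps > 0$ the sum is at most $T^{\overline{\beta}+\eps+o(1)}$. Fix such an $\eps$, let $\sigma > 0$ be the exponent attached to the model phase function $F$, and let $\delta, P, C$ be supplied by (ii) for this pair $\eps, \sigma$. The model phase condition \eqref{fpu} gives $F^{(p+1)}(u) - \frac{d^p}{du^p} u^{-\sigma} = o(1)$ for each fixed $p \leq P$ and all variable $u \in [1,2]$; applying Proposition \ref{auto}(ii) to each of these finitely many (in $p$) functions and passing to a subsequence, the decay becomes uniform in $u$, so \eqref{fpu-bound} holds for all $0 \leq p \leq P$ and $u \in [1,2]$ once $\ii$ is large. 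Likewise $T \geq C$ and $T^{\alpha-\delta} \leq N \leq T^{\alpha+\delta}$ for $\ii$ large, since $T$ is unbounded and $N = T^{\alpha+o(1)}$. Applying the standard statement (ii) at each such $\ii$ gives $|\sum_{n \in I} e(TF(n/N))| \leq CT^{\overline{\beta}+\eps} \leq T^{\overline{\beta}+\eps+o(1)}$, which closes the argument via underspill.

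For the direction (i) $\Rightarrow$ (ii), I would argue by contradiction: if (ii) fails, then there exist fixed $\eps, \sigma > 0$ so that, for every choice of fixed $\delta, P, C$, some fixed witness $T, N, I, F$ refutes the conclusion. Take $\delta = 1/n$, $P = n$, $C = n$ for each positive integer $n$ and collect corresponding witnesses $T_n, N_n, I_n, F_n$; regard $n$ as the ambient index $\ii$ of a variable sequence. Then $T \geq n$ is unbounded, $T^{\alpha - 1/n} \leq N \leq T^{\alpha + 1/n}$ gives $N = T^{\alpha+o(1)}$, and $I \subset [N, 2N]$; moreover, for any fixed $p \geq 0$, once $n \geq p$ we have $|F^{(p+1)}(u) - \frac{d^p}{du^p} u^{-\sigma}| \leq 1/n$ uniformly in $u$, so $F$ is a model phase function with exponent $\sigma$. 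On the other hand, $|\sum_{n \in I} e(TF(n/N))| > n T^{\overline{\beta}+\eps}$, which taking logarithms and dividing by $\log T$ shows this sum is not $\ll T^{\overline{\beta}+o(1)}$, contradicting (i).

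The main delicate point is the construction in the second direction: one must simultaneously produce a single variable $F$ that qualifies as a \emph{model} phase function (i.e., the $o(1)$ bounds in \eqref{fpu} hold for \emph{every} fixed $p$) while violating bounds that are getting more and more stringent. The trick of coupling the three parameters $\delta = 1/n$, $P = n$, $C = n$ simultaneously, so that for each fixed $p$ the bound kicks in for all sufficiently large $n$, is the crucial combinatorial ingredient; the rest of the proof is just unpacking definitions and applying Proposition \ref{auto} and underspill.
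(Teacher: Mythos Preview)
Your proof is correct and follows essentially the same approach as the paper: the (ii) $\Rightarrow$ (i) direction is handled by expanding definitions and invoking Proposition~\ref{auto} for uniformity (the paper dismisses this as ``easy to see'' with a brief reference to that proposition, while you spell out the use of underspill and the finite-$p$ uniformisation more explicitly), and the (i) $\Rightarrow$ (ii) direction proceeds by contradiction with the same diagonal choice $\delta = P^{-1} = C^{-1} = 1/\ii$ to manufacture a single variable counterexample that is simultaneously a model phase function and violates the bound. Your discussion of the ``main delicate point'' correctly identifies the key maneuver, which the paper executes in exactly the same way.
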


\begin{proof}  It is easy to see that (ii) implies (i) by expanding out all the definitions (and using Proposition \ref{auto} to resolve any uniformity issues).  Conversely, suppose that (ii) fails.  Carefully negating all the quantifiers, we conclude that there exists a fixed $\eps, \sigma > 0$ such that for any (fixed) natural number $\ii$, one can find real numbers $T = T_{\ii} \geq \ii$, $T^{\alpha-1/\ii} \leq N = N_{\ii} \leq T^{\alpha+1/\ii}$, an interval $I = I_{\ii} \subset [N_\ii, 2N_\ii]$, and a phase function $F = F_{\ii}$ such that
$$ \bigg|F_\ii^{(p+1)}(u) - \frac{d^p}{du^p} u^{-\sigma}\bigg| \leq 1/\ii$$
for all (fixed) $0 \leq p \leq \ii$ and $u \in [1,2]$, but that
$$ \bigg|\sum_{n \in I} e(T F(n/N))\bigg| \geq \ii T^{\overline{\beta}+\eps}.$$
But then $F = F_\ii$ is a model phase function which gives a counterexample to the claim $\beta(\alpha) \leq \overline{\beta}$.
\end{proof}

It will be slightly more convenient technically to work with the asymptotic formulation in this paper. Furthermore, we will primarily be interested in $\beta$ in the range $0 \leq \alpha \leq 1$; for $\alpha > 1$ one can use the Euler--Maclaurin formula (see e.g. \cite[(2.1.2)]{titchmarsh_theory_1986}) to obtain that $\beta(\alpha)=\alpha-1$ in this regime.

From the $L^2$ mean value theorem (see, e.g. \cite[Theorem 9.1]{ik}) we have
$$ \int_{T}^{2T} \bigg|\sum_{n \in [N,2N]} e(t \log(n/N)) \bigg|^2\ dt \asymp T N$$
from which we can conclude the lower bound
$$\beta(\alpha) \geq \alpha/2$$
for $0 \leq \alpha \leq 1$.  Conjecturally, this lower bound is sharp; see \Cref{exp-pair-conj} below.  From the van der Corput inequalities (see \cite[Theorem 8.20]{ik}) we can verify this conjecture at the endpoints, thus
\begin{equation}\label{beta-end}
    \beta(0) = 0; \quad \beta(1) = 1/2
\end{equation}
while from the van der Corput $B$-process (see, e.g., \cite[p 370]{huxley_area_1996}) we have the symmetry
\begin{equation}\label{beta-reflect}
\beta(1-\alpha) = \frac{1}{2} - \alpha + \beta(\alpha)
\end{equation}
for $0 \leq \alpha \leq 1$.

The function $\beta$ is traditionally studied through the concept of an \emph{exponent pair}.

\begin{definition}[Exponent pair]\label{exp-pair-def}  An exponent pair is a (fixed) element $(k,\ell)$ of the triangle
    \begin{equation}\label{exp-pair-triangle}
        \{ (k,\ell) \in \R^2: 0 \leq k \leq 1/2 \leq \ell \leq 1, k+\ell \leq 1 \}
    \end{equation}
    with the following property: for all model phase functions $F$, all $T \geq N \leq 1$, and all intervals $I \subset [N,2N]$, one has
    \begin{equation*}\label{ntf}
     \sum_{n \in I} e(T F(n/N)) \ll (T/N)^{k+o(1)} N^{\ell+o(1)}
    \end{equation*}
    whenever $T \geq N \geq 1$, $I$ is an interval in $[N,2N]$, and $F \in {\mathcal U}$.
\end{definition}

Again, we can phrase this concept non-asymptotically:

\begin{lemma}[Non-asymptotic definition of exponent pair]  Let $(k,\ell)$ be a fixed element of \eqref{exp-pair-triangle}.  Then the following are equivalent:
    \begin{itemize}
    \item[(i)] $(k,\ell)$ is an exponent pair.
    \item[(ii)] For every (fixed) $\eps>0$ there exist (fixed) $C, P > 0$ such that, whenever $T \geq N \geq 1$, $I \subset [N,2N]$, and $F$ is a phase function obeying \eqref{fpu-bound} for for all (fixed) $0 \leq p \leq P$ and $u \in [1,2]$, then
    \begin{equation*}\label{tfn}
        \bigg|\sum_{n \in I} e(T F(n/N))\bigg| \leq C (T/N)^{k+\eps} N^{\ell+\eps}.
    \end{equation*}
   \end{itemize}
\end{lemma}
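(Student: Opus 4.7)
The plan is to follow the proof of Lemma \ref{beta-asymp} almost line for line, with the understanding that $\sigma > 0$ in \eqref{fpu-bound} should be a universally quantified parameter in (ii) (on which the constants $C, P, \delta$ are permitted to depend), exactly as in the non-asymptotic formulation of $\beta(\alpha)$. For the (ii) $\Rightarrow$ (i) direction I would unpack all the definitions. Let $F$ be a model phase function with fixed exponent $\sigma > 0$, and let $T \geq N \geq 1$ be variable with $T$ unbounded. The family of $o(1)$ estimates \eqref{fpu}, indexed by $u \in [1,2]$ and fixed $p \leq P$, can be made uniform by Proposition \ref{auto}(ii) after passing to a subsequence, so the modulus in \eqref{fpu-bound} is eventually smaller than any prescribed $\delta$. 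Given any fixed $\eps > 0$, invoking (ii) with parameters $\eps$ and $\sigma$ produces fixed $C, P, \delta$; hence for $\ii$ sufficiently large the non-asymptotic bound applies, giving
\[
\bigg|\sum_{n \in I} e(T F(n/N))\bigg| \leq C (T/N)^{k+\eps} N^{\ell+\eps}.
\]
Since $\eps>0$ was arbitrary and $C$ is fixed, underspill converts this into $(T/N)^{k+o(1)} N^{\ell+o(1)}$, which is the asymptotic bound demanded by Definition \ref{exp-pair-def}.

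For the converse (i) $\Rightarrow$ (ii), I would argue by contrapositive, again mimicking the proof of Lemma \ref{beta-asymp}. Suppose (ii) fails. Carefully negating the nested quantifiers produces fixed $\eps, \sigma > 0$ such that, for every fixed natural number $\ii$, one can find real numbers $T_\ii \geq N_\ii \geq 1$ with $T_\ii \geq \ii$, an interval $I_\ii \subset [N_\ii, 2N_\ii]$, and a phase function $F_\ii$ with
\[
\bigg|F_\ii^{(p+1)}(u) - \frac{d^p}{du^p} u^{-\sigma}\bigg| \leq 1/\ii
\]
for all fixed $0 \leq p \leq \ii$ and $u \in [1,2]$, yet
\[
\bigg|\sum_{n \in I_\ii} e(T_\ii F_\ii(n/N_\ii))\bigg| \geq \ii (T_\ii/N_\ii)^{k+\eps} N_\ii^{\ell+\eps}.
\]
Reinterpreted as variable quantities $T, N, I, F$, we have $T$ unbounded and $F$ a model phase function in the sense of Definition \ref{phase-def}. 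Applying (i) would give $|\sum_{n \in I} e(T F(n/N))| \ll (T/N)^{k+o(1)} N^{\ell+o(1)}$, which contradicts the lower bound above (the leading factor $\ii \to \infty$ eventually beats any $T^{o(1)}$ correction to the exponents).

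The only real subtlety, as in Lemma \ref{beta-asymp}, is the bookkeeping involved in negating nested quantifiers and verifying that the constructed variable function $F = F_\ii$ is genuinely a model phase function. One small point that differs from the $\beta$-case: Definition \ref{exp-pair-def} imposes no asymptotic relationship between $\log N$ and $\log T$, so there is no need to extract an $\alpha \in [0,1]$ with $N = T^{\alpha+o(1)}$ via Bolzano--Weierstrass; the bound in (i) is uniform over all $T \geq N \geq 1$, and the argument goes through without any auxiliary selection of parameters. I do not anticipate a genuine obstacle beyond routine manipulation of the asymptotic formalism.
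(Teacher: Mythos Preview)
Your proposal is correct and is exactly the approach the paper intends: the paper simply writes ``The proof of this lemma is similar to that of Lemma \ref{beta-asymp} and is omitted,'' and your argument carries out precisely that adaptation. Your observation that $\sigma$ and $\delta$ must be implicitly quantified in (ii) (as in Lemma \ref{beta-asymp}) is a genuine clarification of the paper's statement, and your remark that no Bolzano--Weierstrass extraction of $\alpha$ is needed here is also correct.
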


The proof of this lemma is similar to that of Lemma \ref{beta-asymp} and is omitted.

We record some known processes that preserve exponent pairs:

\begin{lemma}[Exponent pair processes]\label{exp-process}  The space of exponent pairs is preserved under the following processes:
    \begin{itemize}
    \item[(A)] The $A$-process $A: (k,\ell) \mapsto (\frac{k}{2k+2}, \frac{\ell}{2k+2} + \frac{1}{2})$;
    \item[(B)] The $B$-process $B: (k,\ell) \mapsto (\ell-\frac{1}{2}, k+\frac{1}{2})$;
    \item[(C)] The $C$-process $C: (k,\ell) \mapsto (\frac{k}{12(1+4k)}, \frac{11(1+4k)+\ell}{12(1+4k)})$.
    \end{itemize}
\end{lemma}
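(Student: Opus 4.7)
The plan is to handle each process separately, but with a uniform strategy: given $S = \sum_{n \in I} e(TF(n/N))$, apply a classical analytic transformation to $S$, reduce to a new exponential sum whose phase is still (after appropriate rescaling) a model phase function, and then invoke the exponent pair hypothesis for $(k,\ell)$ on the new sum. In each case there are two things to verify: that the image pair still lies in the triangle \eqref{exp-pair-triangle}, and that the analytic argument produces the claimed exponent. The triangle verification is elementary algebra from $0 \leq k \leq 1/2 \leq \ell \leq 1$ and $k + \ell \leq 1$; for the $A$-process, for example, $k/(2k+2) \leq 1/6$, $\ell/(2k+2) + 1/2 \leq 1$, and $(k+\ell)/(2k+2) + 1/2 \leq 1$, and the $B$ and $C$ cases are similar.

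For the $A$-process, I would use Weyl differencing: for $1 \leq H \leq N$, Cauchy--Schwarz gives
\[ |S|^2 \ll \frac{N^2}{H} + \frac{N}{H} \sum_{1 \leq h \leq H} \bigg| \sum_{n \in I_h} e\bigl(T[F((n+h)/N) - F(n/N)]\bigr) \bigg|, \]
and writing $F((n+h)/N) - F(n/N) = (h/N)\int_0^1 F'((n+sh)/N)\,ds$, each inner sum has the form $\sum_n e((Th/N)\,\tilde F_h(n/N))$, where $\tilde F_h$ is, up to a constant, the derivative of $F$ and hence a model phase function with the parameter $\sigma$ shifted by $1$. Applying the hypothesis to each inner sum with new parameters $T' = Th/N$ and $N' = N$ gives a bound $\ll (Th/N^2)^{k+o(1)} N^{\ell+o(1)}$; summing over $h \leq H$ and optimising $H$ to balance $N^2/H$ against $N\,(TH/N^2)^k N^\ell$ produces $|S| \ll (T/N)^{k/(2k+2)+o(1)} N^{\ell/(2k+2)+1/2+o(1)}$, which is the $A$-process output.

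For the $B$-process, I would use the van der Corput Poisson / stationary phase expansion: up to a lower-order remainder,
\[ S = \frac{N}{T^{1/2}} \sum_{m \in J} e(T\,G(m/M)) + O(T^{o(1)}), \]
where $M \asymp T/N$, $J \subset [M,2M]$, and $G$ is the Legendre-dual phase. Stability of the model phase function class under Legendre transformation (recorded in Example \ref{phase-ex}) ensures that $G$ is again a model phase function, so the hypothesis on $(k, \ell)$ applied to the dual sum with $(T', N') = (T, M)$ yields a bound $\ll (T/M)^{k+o(1)} M^{\ell+o(1)} \asymp N^k (T/N)^\ell$ up to $o(1)$ losses. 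Multiplying by $N/T^{1/2}$ and reorganising exponents gives $|S| \ll (T/N)^{\ell - 1/2+o(1)} N^{k+1/2+o(1)}$, as required.

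The $C$-process is the main obstacle: it is not one of the two classical van der Corput processes, but a more delicate variant (due to Heath-Brown) combining a second $A$-type differencing with a $B$-process step and a careful treatment of the diagonal contribution. I would follow that argument in detail, verifying as I go that the intermediate phase functions produced remain in the model phase function class so that the hypothesis on $(k,\ell)$ may be reapplied; the factor $12(1+4k)$ appearing in the denominators of the $C$-process output reflects the combinatorial losses from balancing the differencing parameters against the $B$-process scale. As a safer alternative, one can simply cite the derivation from the original literature and restrict the work here to checking that the phase-function conventions of \Cref{phase-def} are compatible with the hypotheses under which the $C$-process is derived there.
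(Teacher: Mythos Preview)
Your overall strategy matches the paper's: the paper's own proof is simply a pair of citations (Ivi\'{c} for the $A$- and $B$-processes, Sargos \cite{sargos_analog_2003} for the $C$-process), and your sketches for $A$ and $B$ are the standard Weyl-differencing and Poisson/stationary-phase arguments that those references contain, so there is nothing to fault there.

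For the $C$-process, however, there is a factual slip. You attribute it to Heath-Brown and describe it as ``a second $A$-type differencing with a $B$-process step''; in fact the process with these specific exponents $(k,\ell) \mapsto \bigl(\tfrac{k}{12(1+4k)}, \tfrac{11(1+4k)+\ell}{12(1+4k)}\bigr)$ is due to Sargos \cite{sargos_analog_2003}, where it is developed as an analogue of a fourth-order van der Corput process (the factor $12 = \binom{4}{2}\cdot 2$ in the denominator reflects a double-large-sieve / fourth-moment bound rather than an ordinary Cauchy--Schwarz step). Your informal description of the mechanism does not match this, so if you were to flesh it out rather than cite, you would be heading down the wrong track. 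Your ``safer alternative'' of citing the original literature is exactly what the paper does, and is the right call here; just make sure you cite Sargos rather than Heath-Brown.
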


\begin{proof} For (A), (B), see for instance \cite[Lemmas 2.8, 2.9]{ivic}.  For (C), see \cite[Theorem 5]{sargos_analog_2003}. 
\end{proof}

There exists a further process which almost preserves the space of exponent pairs, which may be stated in terms of bounds on $\beta$. 

\begin{lemma}[Sargos $D$-process]\label{D-process} If $(k, \ell)$ is an exponent pair, then $$\beta(\alpha) \le \max\left(k_1 + \alpha(\ell_1 - k_1), \frac{1}{12} + \frac{2}{3}\alpha\right)$$ for $0\le \alpha \le 1$, where 
\[
(k_1, \ell_1) = D(k, \ell) := \Big(\frac{5k+\ell+2}{8(5k+3\ell+2)}, \frac{29k+21\ell+10}{8(5k+3\ell+2)}\Big).
\]
\end{lemma}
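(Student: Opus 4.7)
The plan is to follow Sargos's $D$-process as in \cite{sargos_analog_2003}: reduce the sum $S = \sum_{n \in I} e(TF(n/N))$ (with $N = T^{\alpha+o(1)}$) via a carefully chosen differencing scheme to shorter exponential sums to which the hypothesized exponent pair $(k,\ell)$ can be applied, then optimize the parameters.

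The first step is a Weyl differencing with a parameter $H \in [1, N]$: by Cauchy--Schwarz,
$$|S|^2 \ll \frac{N^2}{H} + \frac{N}{H}\sum_{0 < h \leq H}\Big|\sum_{n} e(T \Phi_h(n/N))\Big|,$$
where $\Phi_h(u) = F(u+h/N)-F(u)$. Using \eqref{fpu} and \Cref{auto}, after rescaling by $h/N$ each $\Phi_h$ is itself (approximately) a model phase function with effective $T$-parameter of size $Th/N$, so the exponent pair $(k,\ell)$ bounds each inner sum by $(Th/N^2)^{k+o(1)} N^{\ell+o(1)}$. Sargos's $D$-process uses a further refinement of this scheme (an additional differencing step tailored to the behavior of the fourth and higher derivatives of $F$, controlled through \eqref{fpu}); summing over $h$ and optimizing $H$ then produces the exponent $k_1 + \alpha(\ell_1 - k_1)$, with the rational transformation $D(k,\ell)$ arising from the explicit arithmetic of this optimization.

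The max-structure reflects the fact that the optimal $H$ may lie outside the admissible range $[1,N]$ for certain $\alpha$; in that regime one substitutes an alternative, unconditional bound of exponent-pair type, yielding the floor $\tfrac{1}{12} + \tfrac{2}{3}\alpha$. The main obstacle is the algebraic bookkeeping on two fronts: verifying that each rescaled iterated difference remains a valid model phase function in the sense of \Cref{phase-def} (so that the hypothesis applies without extra loss), which is handled by Taylor expanding \eqref{fpu} and using \Cref{auto} to convert pointwise $o(1)$ errors into uniform ones; and extracting precisely the rational coefficients of $D(k,\ell)$ and the crossover line $\tfrac{1}{12}+\tfrac{2}{3}\alpha$ from the optimization, which can be cross-referenced against the calculations carried out by Sargos in \cite{sargos_analog_2003}.
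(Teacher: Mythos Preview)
The paper does not supply an argument here at all: its proof is the one-line citation ``See \cite[Theorem 7.1]{sargos_points_1995}.''  So there is no paper-proof to compare against beyond that reference.  That said, two things in your sketch warrant correction.

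First, the citation is wrong.  You point to \cite{sargos_analog_2003}, but that paper is the source of the $C$-process (\Cref{exp-process}(C)); the $D$-process comes from the earlier paper \cite{sargos_points_1995}.  The two processes are based on genuinely different mechanisms, and the mix-up propagates into the method you describe.

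Second, and more substantively, the scheme you outline --- Cauchy--Schwarz, then Weyl differencing in a parameter $H$, then apply $(k,\ell)$ to the differenced sums and optimise $H$ --- is precisely the van der Corput $A$-process, and after optimisation it produces $A(k,\ell) = (\tfrac{k}{2k+2},\tfrac{\ell}{2k+2}+\tfrac{1}{2})$, not $D(k,\ell)$.  Your phrase ``a further refinement of this scheme (an additional differencing step tailored to the behavior of the fourth and higher derivatives)'' is where the actual content of the $D$-process would have to live, but it is left as a placeholder.  Sargos's $D$-process in \cite{sargos_points_1995} is not an iterated Weyl differencing; it rests on estimates for \emph{short} exponential sums via counting lattice points near a curve (this is the subject of that paper), and the rational transformation $D(k,\ell)$ and the auxiliary term $\tfrac{1}{12}+\tfrac{2}{3}\alpha$ arise from that lattice-point machinery, not from an $H$-optimisation of the type you describe.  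As written, your sketch would not reach the stated bound; to make it a proof you would need to replace the differencing template with the short-sum/lattice-point argument from \cite{sargos_points_1995}.
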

\begin{proof}
See \cite[Theorem 7.1]{sargos_points_1995}.
\end{proof}

In practice, this lemma  implies that if $(k, \ell)$ is an exponent pair, then so is $D(k, \ell)$. 

The relation between exponent pairs and the function $\beta$ can be summarised as follows.

\begin{lemma}[Duality between exponent pairs and $\beta$]\label{beta-duality}  Let $(k,\ell)$ be in the triangle \eqref{exp-pair-triangle}.  Then the following are equivalent:
    \begin{itemize}
    \item[(i)] $(k,\ell)$ is an exponent pair.
    \item[(ii)] $\beta(\alpha) \leq k + (\ell-k)\alpha$ for all $0 \leq \alpha \leq 1$.
    \end{itemize}
\end{lemma}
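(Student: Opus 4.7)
The plan is to prove the two directions by translating between the ``$T \geq N \geq 1$'' parametrisation used in Definition \ref{exp-pair-def} and the ``$N = T^{\alpha+o(1)}$'' parametrisation used in Definition \ref{beta-def}, using Bolzano--Weierstrass on the ratio $\log N / \log T$ to connect them.

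For (i)$\Rightarrow$(ii), I fix $\alpha \in [0,1]$ and set up the hypothesis of \Cref{beta-def}: take $T$ unbounded, $N = T^{\alpha+o(1)}$, $F$ a model phase function, and $I \subset [N,2N]$. Since $T$ is unbounded and $\alpha \leq 1$, we have $T \geq N \geq 1$ (passing to a subsequence and handling the $\alpha=0$ edge case by noting that if $N$ fails to be $\geq 1$ the sum $\sum_{n \in I}$ is empty or trivial). Then the exponent pair bound from Definition \ref{exp-pair-def} yields
$$ \sum_{n \in I} e(TF(n/N)) \ll (T/N)^{k+o(1)} N^{\ell+o(1)}.$$
Substituting $N = T^{\alpha+o(1)}$ and using that $T^{o(1)}$ is an infinitesimal perturbation of the exponent when $T$ is unbounded, the right-hand side becomes $T^{k(1-\alpha)+\ell \alpha + o(1)} = T^{k+(\ell-k)\alpha + o(1)}$. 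By the definition of $\beta(\alpha)$ as the least admissible exponent, this yields $\beta(\alpha) \leq k + (\ell-k)\alpha$.

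For (ii)$\Rightarrow$(i), I consider arbitrary variable $T \geq N \geq 1$, a model phase function $F$, and an interval $I \subset [N,2N]$, and must produce the exponent pair estimate. If $T$ is bounded then $N$ is bounded, so the sum is bounded by a fixed constant, while the right-hand side $(T/N)^{k+o(1)} N^{\ell+o(1)}$ is $\gg 1$ (since $T/N \geq 1$, $N \geq 1$, and $k, \ell - k \geq 0$), so the bound holds trivially. If $T$ is unbounded, then the ratio $\log N / \log T$ lies in $[0,1]$, so after passing to a subsequence it converges to a fixed $\alpha \in [0,1]$, and we have $N = T^{\alpha+o(1)}$. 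Applying the hypothesis \Cref{beta-def},
$$ \sum_{n \in I} e(TF(n/N)) \ll T^{\beta(\alpha) + o(1)} \leq T^{k + (\ell-k)\alpha + o(1)} = (T/N)^{k+o(1)} N^{\ell+o(1)},$$
which is the desired exponent pair bound. The standard contradiction argument in the ``cheap nonstandard'' framework (as in the proof of Lemma \ref{beta-asymp}) justifies that proving the bound after passing to a subsequence suffices: any putative violating sequence can be restricted to a subsequence along which $\log N / \log T$ converges, yielding a contradiction with the $\beta(\alpha)$ bound.

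The main obstacle is a technical rather than conceptual one: keeping the asymptotic bookkeeping straight, in particular verifying that the rewriting $(T/N)^{k+o(1)} N^{\ell+o(1)} = T^{k+(\ell-k)\alpha+o(1)}$ is valid under the asymptotic conventions of Section \ref{notation-sec} (it is, because $T$ is unbounded so $\log T$ is unbounded and $o(1)\cdot \log T$ can be absorbed into $o(1) \cdot \log T$), and checking the edge cases where $T$ or $N$ is bounded, or where $\alpha$ lies at the boundary of $[0,1]$.
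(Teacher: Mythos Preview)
Your approach is essentially the same as the paper's, and the (ii)$\Rightarrow$(i) direction is fine. However, there is a small gap in your (i)$\Rightarrow$(ii) direction at the endpoint $\alpha = 1$. You claim that since $T$ is unbounded and $\alpha \leq 1$, we have $T \geq N$; but when $\alpha = 1$ and $N = T^{1+o(1)}$ with the $o(1)$ term positive (which Definition~\ref{beta-def} does not exclude), one has $N > T$, and no passage to a subsequence can repair this if the $o(1)$ is positive for all $\ii$. In that situation the hypothesis $T \geq N$ of Definition~\ref{exp-pair-def} fails, so the exponent pair bound cannot be invoked directly.

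The paper avoids this by restricting the main argument to $0 < \alpha < 1$ and treating the endpoints separately via \eqref{beta-end}: since $(k,\ell)$ lies in the triangle \eqref{exp-pair-triangle}, one has $k \geq 0$ and $\ell \geq 1/2$, so the known values $\beta(0) = 0$ and $\beta(1) = 1/2$ immediately give $\beta(0) \leq k = k + (\ell-k)\cdot 0$ and $\beta(1) \leq \ell = k + (\ell-k)\cdot 1$. Adding this one-line observation to your argument closes the gap.
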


\begin{proof}  If (i) holds, then for any $0 < \alpha < 1$, any unbounded $T \geq 1$, any $N = T^{\alpha+o(1)}$, interval $I \subset [N,2N]$, and model phase function $F$, we have from (i) that
$$ \sum_{n \in I} e(T F(n/N)) \ll (T/N)^{k+o(1)} N^{\ell+o(1)} = T^{k + (\ell-k)\alpha + o(1)}.$$
From Definition \ref{beta-def} we conclude that $\beta(\alpha) \leq k + (\ell-k) \alpha$.  Also since $(k,\ell)$ lies in \eqref{exp-pair-triangle}, we see from \eqref{beta-end} that we also have $\beta(\alpha) \leq k + (\ell-k) \alpha$ for $\alpha=0,1$.

Now suppose that (ii) holds.  Let $F, T, N, I$ be as in Definition \ref{exp-pair-def}.  By underspill it suffices to show that
$$ \sum_{n \in I} e(T F(n/N)) \ll (T/N)^{k+\eps+o(1)} N^{\ell+\eps+o(1)}$$
for any fixed $\eps>0$.  We may assume that $T \leq N^{1/\eps+1}$, since the claim follows from the trivial bound $\sum_{n \in I} e(T F(n/N)) \ll N$ otherwise.  We may also assume that $N$ is unbounded, since the claim is clear for $N$ bounded; this forces $T$ to be unbounded as well.

By passing to a subsequence we may assume that $N = T^{\alpha+o(1)}$ for some fixed $0 \leq \alpha \leq 1$.  By Definition \ref{beta-def} we then have
$$ \sum_{n \in I} e(T F(n/N)) \ll T^{\beta(\alpha)+o(1)}$$
and hence by (ii)
$$ \sum_{n \in I} e(T F(n/N)) \ll (T/N)^{k+o(1)} N^{\ell+o(1)}$$
giving the claim.
\end{proof}

\begin{remark}  When applying \Cref{beta-duality} in the case $\ell-k \geq 1/2$, it suffices in (ii) to restrict to the range $0 \leq \alpha \leq 1/2$, as the remaining range $1/2 \leq \alpha \leq 1$ is then covered by the symmetry \eqref{beta-reflect}.  Similarly, when applying \Cref{beta-duality} in the case $\ell-k \leq 1/2$, it suffices to restrict to the range $1/2 \leq \alpha \leq 1$.
\end{remark}

The main conjecture on exponent pairs and $\beta$ is then

\begin{conjecture}[Exponent pair conjecture]\label{exp-pair-conj}\
    \begin{itemize}
    \item[(i)] Every point in the triangle \eqref{exp-pair-triangle} is an exponent pair.  In particular, $(0,1/2)$ is an exponent pair.
    \item[(ii)]  We have
\[
\beta(\alpha) = \begin{cases}
\alpha/2,& 0 \leq \alpha \leq 1\\
\alpha - 1,&\alpha > 1
\end{cases}.
\]
    \end{itemize}
\end{conjecture}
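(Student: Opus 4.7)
The plan is first to note that (i) and (ii) are equivalent, so the entire conjecture reduces to a single assertion. One direction is immediate from \Cref{beta-duality}: if $(0,1/2)$ is an exponent pair, then $\beta(\alpha) \le \alpha/2$ on $[0,1]$, which combined with the lower bound $\beta(\alpha) \ge \alpha/2$ from the $L^2$ mean value theorem recalled above, together with the Euler--Maclaurin treatment of the range $\alpha > 1$, gives (ii). Conversely, for any $(k,\ell)$ in \eqref{exp-pair-triangle} the affine function $\alpha \mapsto k + (\ell-k)\alpha$ dominates $\alpha/2$ on $[0,1]$ iff it dominates at the endpoints, i.e.\ $k \ge 0$ and $\ell \ge 1/2$, which is exactly the triangle constraint; so (ii) combined with \Cref{beta-duality} yields (i). Hence one is reduced to proving the single endpoint exponent pair $(0,1/2)$, or equivalently the single bound $\beta(\alpha) \le \alpha/2$ on $[0,1]$; by the symmetry \eqref{beta-reflect} one can further concentrate on the critical point $\beta(1/2) \le 1/4$.

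My actual strategy for this reduced assertion would be to iterate the $A$-, $B$-, $C$-, and $D$-processes of \Cref{exp-process} and \Cref{D-process} on all known seed exponent pairs (Weyl, van der Corput, Bombieri--Iwaniec, Huxley, and the recent contributions captured in the ANTEDB), and to search for an orbit that either reaches $(0,1/2)$ or accumulates there, possibly augmented by additional direct exponential sum inputs feeding through \Cref{beta-duality}. In principle the ANTEDB framework is well suited to performing exactly such an automated search over the closure of all available seeds under all available operations.

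The main obstacle, however, is that Conjecture \ref{exp-pair-conj} already implies the Lindel\"of Hypothesis: inserting the exponent pair $(0,1/2)$ with $F(u) = \log u$ and $N \asymp T^{1/2}$ into \Cref{exp-pair-def}, combined with partial summation and the approximate functional equation, bounds the short Dirichlet polynomials representing $\zeta(1/2+it)$ by $T^{o(1)}$, giving $\zeta(1/2 + it) \ll T^{o(1)}$. Consequently no combination of the presently catalogued processes and seed pairs can be expected to close the remaining gap, since none of them is known to yield Lindel\"of either; numerically the $\{A,B,C,D\}$-closure of the known seeds stays a positive distance from $(0,1/2)$. A genuinely new analytic input -- beyond the van der Corput and Bombieri--Iwaniec families of methods, and beyond the Guth--Maynard circle of ideas -- would be required. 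Absent such an input I have no concrete proof plan, and the statement is flagged here as a conjecture whose partial progress is being systematically tracked rather than as a theorem within present reach.
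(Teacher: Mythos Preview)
Your treatment is appropriate and matches the paper's: this statement is a \emph{conjecture}, not a theorem, and the paper offers no proof of it either. The only content the paper supplies immediately after the statement is the one-line remark ``In view of \Cref{beta-duality}, the two components of this conjecture are equivalent,'' which is exactly the equivalence you spell out in your first paragraph (and your argument for it via the endpoint check $k\ge 0$, $\ell\ge 1/2$ is correct). Your further observation that the conjecture implies Lindel\"of, and hence lies beyond the reach of the catalogued $A,B,C,D$-processes and known seed pairs, is also consistent with the paper (which notes the Lindel\"of implication after \Cref{exp-pair-mu}); there is nothing more to be proved here.
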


In view of \Cref{beta-duality}, the two components of this conjecture are equivalent.

A large number of bounds on exponential sums can be interpreted as piecewise-linear upper bounds on $\beta(\alpha)$ in various ranges of $\alpha$; see e.g. \cite[Table 3]{trudgian-yang} (which in turn builds upon \cite[Tables 17.1, 19.2]{huxley_area_1996}). For many of these results, a larger class of phase functions can be treated but we ignore this direction of generalisation for simplicity.  There are additional bounds in the literature that are not covered by these tables.  To give just one example, we present

\begin{lemma}[Heath-Brown exponent bound]\label{heath-brown-2017}
$$ \beta(\alpha) \leq \alpha + \max\left( \frac{1-k\alpha}{k(k-1)}, -\frac{\alpha}{k(k-1)}, -\frac{2\alpha}{k(k-1)} - \frac{2(1-k\alpha)}{k^2(k-1)}\right)$$
for any $\alpha>0$ and any natural number $k \geq 3$.
\end{lemma}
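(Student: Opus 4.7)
The plan is to reduce to Heath-Brown's $k$th derivative estimate from his 2017 paper, and translate the resulting bound into the language of $\beta$. First, from the model phase function hypothesis \eqref{fpu} with parameter $\sigma > 0$, one has for every fixed $j \geq 1$ that
\[
F^{(j)}(u) = (-1)^{j-1}\sigma(\sigma+1)\cdots(\sigma+j-2)\, u^{-\sigma-j+1} + o(1)
\]
on $[1,2]$. Since $\sigma > 0$, the coefficient here is a nonzero fixed constant and $u^{-\sigma-j+1} \asymp 1$ on $[1,2]$, so $|F^{(j)}(u)| \asymp 1$ uniformly for $u \in [1,2]$. Writing $f(n) \coloneqq TF(n/N)$ and setting $\lambda \coloneqq T/N^k$, the chain rule then yields $|f^{(j)}(n)| \asymp T/N^j$ for each fixed $j \geq 1$ and $n \in [N,2N]$, and in particular $|f^{(k)}(n)| \asymp \lambda$.

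Next, I would invoke Heath-Brown's main theorem, which under these derivative bounds asserts that for any fixed $k \geq 3$,
\[
\sum_{n \in I} e(f(n)) \ll N^{1+o(1)} \left( \lambda^{\frac{1}{k(k-1)}} + N^{-\frac{1}{k(k-1)}} + \lambda^{-\frac{2}{k^2(k-1)}} N^{-\frac{2}{k(k-1)}} \right)
\]
for any interval $I \subset [N, 2N]$, after absorbing the $N^\eps$-loss in its original statement into $o(1)$ via \Cref{auto} and underspill.

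Substituting $N = T^{\alpha + o(1)}$ and $\lambda = T^{1-k\alpha + o(1)}$, a direct computation shows that the three competing terms have logarithms base $T$ equal respectively to
\[
\alpha + \frac{1-k\alpha}{k(k-1)} + o(1),\qquad \alpha - \frac{\alpha}{k(k-1)} + o(1),\qquad \alpha - \frac{2\alpha}{k(k-1)} - \frac{2(1-k\alpha)}{k^2(k-1)} + o(1).
\]
Taking the maximum and applying \Cref{beta-def} gives the claimed bound on $\beta(\alpha)$.

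The only real work is the routine verification that the model phase function class satisfies the derivative hypotheses required by Heath-Brown's theorem, which is automatic once one notes that all derivatives of $u^{-\sigma}$ on $[1,2]$ are bounded above and away from zero. No new analytic input beyond Heath-Brown's theorem is required, and no substantive obstacle arises.
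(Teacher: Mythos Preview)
Your proposal is correct and follows essentially the same route as the paper: set $f(x)=TF(x/N)$, use the model phase hypothesis \eqref{fpu} to show $|f^{(k)}|\asymp T/N^k$, apply Heath-Brown's 2017 $k$th derivative theorem, and read off the three exponents. The paper's version is slightly terser (it records the signed bound $0<\lambda_k<f^{(k)}(x)\le A\lambda_k$ directly rather than computing $F^{(j)}$ explicitly), but there is no substantive difference.
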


\begin{proof}
Let $T \geq 1$ be unbounded, let $N = T^{\alpha+o(1)}$, let $F$ be a model phase function, and $I \subset [N, 2N]$ be an interval.  Writing $f(x) \coloneqq TF(x/N)$, we see from \eqref{fpu} that
$$ 0 < \lambda_k < f^{(k)}(x) \leq A \lambda_k$$
for all $x \in [N,2N]$ and some $\lambda_k \asymp T N^{-k} = T^{1-k\alpha+o(1)}$ and $A \asymp 1$.  Applying \cite[Theorem 1]{heathbrown_new_2017}, we conclude that
$$ \sum_{n \in I} e(T F(n/N)) \ll N^{1+\eps} ( \lambda_k^{1/k(k-1)} +N^{-1/k(k-1)} + N^{-2/k(k-1)} \lambda_k^{-2/k^2(k-1)})$$
for any fixed $\eps>0$, and hence by underspill
$$ \sum_{n \in I} e(T F(n/N)) \ll N^{1+o(1)} ( \lambda_k^{1/k(k-1)} +N^{-1/k(k-1)} + N^{-2/k(k-1)} \lambda_k^{-2/k^2(k-1)}).$$
Expressing the right-hand side as a power of $T$, we obtain
$$ \sum_{n \in I} e(T F(n/N)) \ll T^{\beta+o(1)}$$
where
$$ \beta \coloneqq \alpha + \max\left( \frac{1-k\alpha}{k(k-1)}, -\frac{\alpha}{k(k-1)}, -\frac{2\alpha}{k(k-1)} - \frac{2(1-k\alpha)}{k^2(k-1)}\right).$$
The claim now follows from \Cref{beta-def}.
\end{proof}

\begin{remark}
From the proof of \Cref{heath-brown-2017} we see that the result in \cite[Theorem 1]{heathbrown_new_2017} is somewhat stronger than the bound on $\beta$ presented here, because that result handles more general phase functions than model phase functions.  However, in order to combine results from different papers as easily as possible, we have restricted attention to model phase functions, as discussed in \Cref{phase-ex}.
\end{remark}

In \cite{bourgain_decoupling_2017}, the exponent pair $\left(\dfrac{13}{84}, \dfrac{55}{84}\right)$ was established, while in
\cite[Lemma 1.1]{trudgian-yang}, the exponent pairs $\left(\dfrac{715}{10238}, \dfrac{7955}{10238}\right)$, $\left(\dfrac{4742}{38463}, \dfrac{35731}{51284}\right)$ were established.
One can then combine these with the $A,D$ processes from \Cref{exp-process} and \Cref{D-process} to obtain further pairs, which give bounds on $\beta$ thanks to \Cref{beta-duality}.  When combined with the bounds from \cite[Table 3]{trudgian-yang}, one obtains the upper bounds on $\beta(\alpha)$ in \Cref{beta-table} (with the bounds already listed in \cite[Table 3]{trudgian-yang} marked with an asterisk).

With this improved table of $\beta$ estimates, one obtains the following four new exponent pairs, which are plotted in \Cref{fig1} and \Cref{fig2}.

\begin{table}[ht]
    \resizebox{\textwidth}{!}{
    \def\arraystretch{1.8}
    \centering
    \begin{tabular}{|c|c|c|}
    \hline
    $\beta(\alpha)$ bound & $\alpha$ range & Reference\\
    \hline
    $\dfrac{1}{20} + \dfrac{3}{4}\alpha$ & $0\leq \alpha < \dfrac{1}{4}$ & Lemma \ref{heath-brown-2017} with $k = 5$\\
    \hline
    $\dfrac{19}{20}\alpha$ & $\dfrac{1}{4}\leq \alpha < \dfrac{890}{3277}$ & Lemma \ref{heath-brown-2017} with $k = 5$\\
    \hline
    $\dfrac{89}{2706} + \dfrac{2243}{2706}\alpha$ & $\dfrac{890}{3277}\leq \alpha < \dfrac{199}{716}$ & \cite[Table 17.1]{huxley_area_1996}*\\
    \hline
    $\dfrac{1}{66} + \dfrac{235}{264}\alpha$ & $\dfrac{120}{419}\leq \alpha < \dfrac{754}{2579}$ & \cite[Table 17.1]{huxley_area_1996}*\\
    \hline
    $\dfrac{9}{217} + \dfrac{1389}{1736}\alpha$ & $\dfrac{754}{2579}\leq \alpha < \dfrac{251324}{841245}$ & Exponent pair $\left(\dfrac{9}{217}, \dfrac{1461}{1736}\right) = AD\left(\dfrac{13}{84}, \dfrac{55}{84}\right)$*\\
    \hline
    $\dfrac{2371}{43205} + \dfrac{52209}{69128}\alpha$ & $\dfrac{251324}{841245}\leq \alpha < \dfrac{861996}{2811205}$ & \begin{tabular}{@{}c@{}}Exponent pair $\left(\dfrac{2371}{43205}, \dfrac{280013}{345640}\right)$ \\
    $= A\left(\dfrac{4742}{38463}, \dfrac{35731}{51284}\right)$\end{tabular}\\
    \hline
    $\dfrac{13}{146} + \dfrac{47}{73}\alpha$ & $\dfrac{861996}{2811205}\leq \alpha < \dfrac{87}{275}$ & \cite[Table 17.1]{huxley_area_1996}*\\
    \hline
    $\dfrac{11}{244} + \dfrac{191}{244}\alpha$ & $\dfrac{87}{275}\leq \alpha < \dfrac{423}{1295}$ & \cite[Table 17.1]{huxley_area_1996}*\\
    \hline
    $\dfrac{89}{1282} + \dfrac{454}{641}\alpha$ & $\dfrac{423}{1295}\leq \alpha < \dfrac{227}{601}$ & \cite[Table 17.1]{huxley_area_1996}*\\
    \hline
    $\dfrac{29}{280} + \dfrac{173}{280}\alpha$ & $\dfrac{227}{601}\leq \alpha < \dfrac{12}{31}$ & \cite[Table 17.1]{huxley_area_1996}*\\
    \hline
    $\dfrac{1}{32} + \dfrac{103}{128}\alpha$ & $\dfrac{12}{31}\leq \alpha < \dfrac{1508}{3825}$ & \cite[Table 17.1]{huxley_area_1996}*\\
    \hline
    $\dfrac{18}{199} + \dfrac{521}{796}\alpha$ & $\dfrac{1508}{3825}\leq \alpha < \dfrac{62831}{155153}$ & Exponent pair $\left(\dfrac{18}{199}, \dfrac{593}{796}\right) = D\left(\dfrac{13}{84}, \dfrac{55}{84}\right)$\\
    \hline
    $\dfrac{569}{2800} + \dfrac{1053}{2800}\alpha$ & $\dfrac{62831}{155153}\leq \alpha < \dfrac{143}{349}$ & \cite[Table 19.2]{huxley_area_1996}*\\
    \hline
    $\dfrac{491}{5530} + \dfrac{1812}{2765}\alpha $ & $\dfrac{143}{349}\le\alpha < \dfrac{263}{638}$ & \cite[Table 19.2]{huxley_area_1996}*\\
    \hline
    $\dfrac{113}{1345} + \dfrac{897}{1345}\alpha $ & $ \dfrac{263}{638} \le\alpha < \dfrac{1673}{4038}$ & \cite[Table 19.2]{huxley_area_1996}*\\
    \hline
    $\dfrac{2}{9} + \dfrac{1}{3} \alpha $ & $ \dfrac{1673}{4038} \le\alpha < \dfrac{5}{12}$ & \cite[(3.18)]{bourgain_decoupling_2017}* \\
    \hline
    $\dfrac{1}{12} + \dfrac{2}{3}\alpha$ & $\dfrac{5}{12}\leq \alpha < \dfrac{3}{7}$ & \cite[(3.18)]{bourgain_decoupling_2017}* \\
    \hline
    $\dfrac{13}{84} + \dfrac{1}{2}\alpha$ & $\dfrac{3}{7}\leq \alpha \leq \dfrac{1}{2}$ & Exponent pair $\left(\dfrac{13}{84}, \dfrac{55}{84}\right)$* \\
    \hline
    \end{tabular}
    }
    \caption{Bounds on $\beta(\alpha)$. With slight abuse of notation, $D(k, \ell)$ means the exponent pair derived from $\beta$ estimates obtained by applying the $D$ process to $(k, \ell)$. In each case one may verify that an exponent pair is in fact generated.}
    \label{beta-table}
\end{table}

\begin{theorem}[New exponent pairs]\label{new-exp-pair} The following are exponent pairs:
\[
\left(\frac{89}{1282}, \frac{997}{1282}\right),\quad \left(\frac{652397}{9713986}, \frac{7599781}{9713986}\right),
\]
\[
\left(\frac{10769}{351096}, \frac{609317}{702192}\right),\quad \left(\frac{89}{3478}, \frac{15327}{17390}\right).
\]
\end{theorem}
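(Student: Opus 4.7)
The plan is to apply the duality established in Lemma \ref{beta-duality}: to verify that a candidate $(k,\ell)$ from the triangle \eqref{exp-pair-triangle} is an exponent pair, it suffices to prove the linear upper bound $\beta(\alpha) \leq k + (\ell-k)\alpha$ for all $0 \leq \alpha \leq 1$. One first checks that each of the four candidate pairs lies in \eqref{exp-pair-triangle} (a straightforward arithmetic check on the given rationals, e.g.\ $89/1282 + 997/1282 = 1086/1282 < 1$, etc.), and that each has slope $\ell - k > 1/2$. By the remark immediately following Lemma \ref{beta-duality}, it is then enough to verify the bound on the subrange $0 \leq \alpha \leq 1/2$; the remaining range is handled automatically by the functional symmetry \eqref{beta-reflect}.

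Next, I would invoke the table of piecewise-linear upper bounds on $\beta(\alpha)$ recorded in Table \ref{beta-table}. On each interval $[\alpha_j, \alpha_{j+1}] \subset [0,1/2]$ appearing in the table, the known upper bound $\beta(\alpha) \leq a_j + b_j \alpha$ is linear, and the candidate majorant $\alpha \mapsto k + (\ell-k)\alpha$ is linear as well. The inequality between two affine functions on a closed interval reduces to its evaluation at the two endpoints, so the whole claim collapses to a finite collection of rational inequalities of the form
\[
    a_j + b_j \alpha_j \;\leq\; k + (\ell - k)\alpha_j
\]
at the breakpoints $\alpha_j$ of Table \ref{beta-table} lying in $[0,1/2]$. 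For each of the four candidate pairs one carries out this comparison; whenever all such endpoint inequalities hold, Lemma \ref{beta-duality} together with the reflection \eqref{beta-reflect} certifies that $(k,\ell)$ is an exponent pair.

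The four specific pairs listed in the theorem are not random: they are produced by the ANTEDB optimisation routine as lines supporting the piecewise-linear upper hull of Table \ref{beta-table} from above, i.e.\ each is (within the rational precision used) the infimum of lines of the form $\alpha \mapsto k + (\ell-k)\alpha$ that dominate $\beta(\alpha)$ and are tangent to two of the pieces in the table. Equivalently, one can describe them as arising from appropriate compositions of the $A$, $B$, $C$, $D$ processes in Lemmas \ref{exp-process} and \ref{D-process} applied to the already-known pairs $(13/84, 55/84)$ of \cite{bourgain_decoupling_2017} and the Trudgian--Yang pairs $(715/10238, 7955/10238)$ and $(4742/38463, 35731/51284)$, possibly together with entries from Huxley's Tables 17.1 and 19.2; for example one checks that $D$ applied to $(13/84,55/84)$ contributes the piece $\frac{18}{199} + \frac{521}{796}\alpha$, and the new pair $(89/1282, 997/1282)$ is forced by the requirement that the candidate line lie above exactly two neighbouring pieces of the table.

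The only real obstacle is arithmetic bookkeeping: the denominators are large, and there are of order fifteen breakpoints on $[0,1/2]$ to check for each of four candidate pairs. This is precisely the routine but error-prone step that the ANTEDB python module is designed to automate, and where I would rely on it. Conceptually, once Lemma \ref{beta-duality}, Table \ref{beta-table}, and the symmetry \eqref{beta-reflect} are in place, the proof is purely mechanical verification of linear inequalities at a finite list of rational points.
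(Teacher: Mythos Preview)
Your proposal is correct and matches the paper's approach: the paper's own proof is the single sentence ``The claim follows from \Cref{beta-duality} after some further computer calculation,'' and what you have written is exactly a correct unpacking of that calculation---reduce via Lemma~\ref{beta-duality} (and the reflection remark, since $\ell-k>1/2$ in all four cases) to checking that the affine line $k+(\ell-k)\alpha$ dominates each piece of Table~\ref{beta-table} on $[0,1/2]$, which is a finite list of rational endpoint inequalities. Your third paragraph's speculation about realising the pairs via $A,B,C,D$ compositions is extraneous to the argument (the paper does not claim this), but it does no harm.
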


\begin{proof} The claim follows from \Cref{beta-duality} after some further computer calculation.
\end{proof}

The results in Table~\ref{beta-table} were found systematically using a programmable part of the ANTEDB implemented in Python, which we briefly introduce here. The library records theorems and conjectures from the literature as \texttt{Hypothesis} objects. Each \texttt{Hypothesis} either directly references a result from the literature, or contains a dependency set of other \texttt{Hypothesis} objects as well as a \texttt{proof} attribute that specifies how the dependencies are assembled into a proof. For instance, the exponent pair $(1/6, 2/3)$ can be represented in the database as a \texttt{Hypothesis} with two dependencies: one representing the $A$ process (with no dependencies of its own) and another representing the $(1/2, 1/2)$ exponent pair.

Equipped with a (large) set of \texttt{Hypothesis} objects representing known exponent pairs and bounds on $\beta$, as well as how they relate to one another, one may use a number of routines in the ANTEDB to combine those objects optimally. Here, we describe a polytope-based computation that may be easily generalised for optimising many other exponents recorded in the database. First, the starting \texttt{Hypothesis} set is repeatedly expanded using codified versions of \Cref{exp-process}, \Cref{D-process} and \Cref{beta-duality} until an equilibrium is reached. The key idea is then to represent each $\beta$ estimate as a polygon containing feasible $(\alpha, \beta)$ tuples, and then to take the intersection of all such polygons, which is easily achieved via standard boolean polytope operations. The resulting polygon may then be converted back into one or more \texttt{Hypothesis} objects representing a piecewise-linear bound on $\beta$. 

This machine-assisted approach uncovers a number of improvements to manually-derived $\beta$ bounds, particularly those bounds depending on multiple applications of exponent pair processes.

\begin{figure}
\centering
\includegraphics[width=9cm]{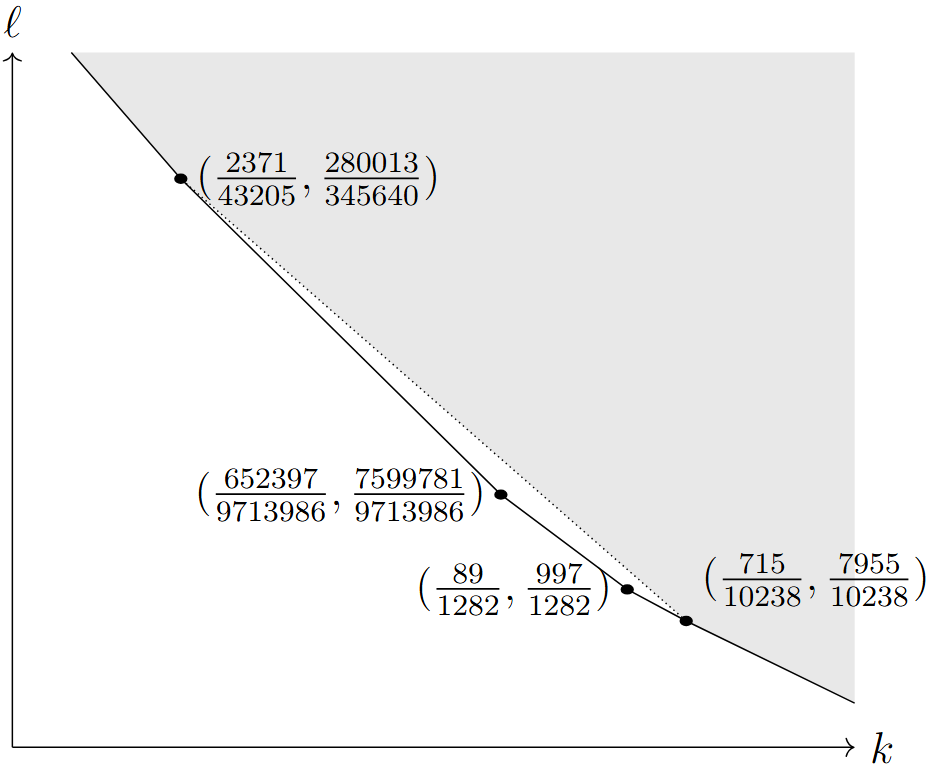}
\caption{Indicative plot of the new exponent pairs $(\frac{89}{1282}, \frac{997}{1282})$ and $(\frac{652397}{9713986}, \frac{7599781}{9713986})$ compared to previously known set of exponent pairs (shaded grey).}
\label{fig1}
\end{figure}

\begin{figure}
\centering
\includegraphics[width=9cm]{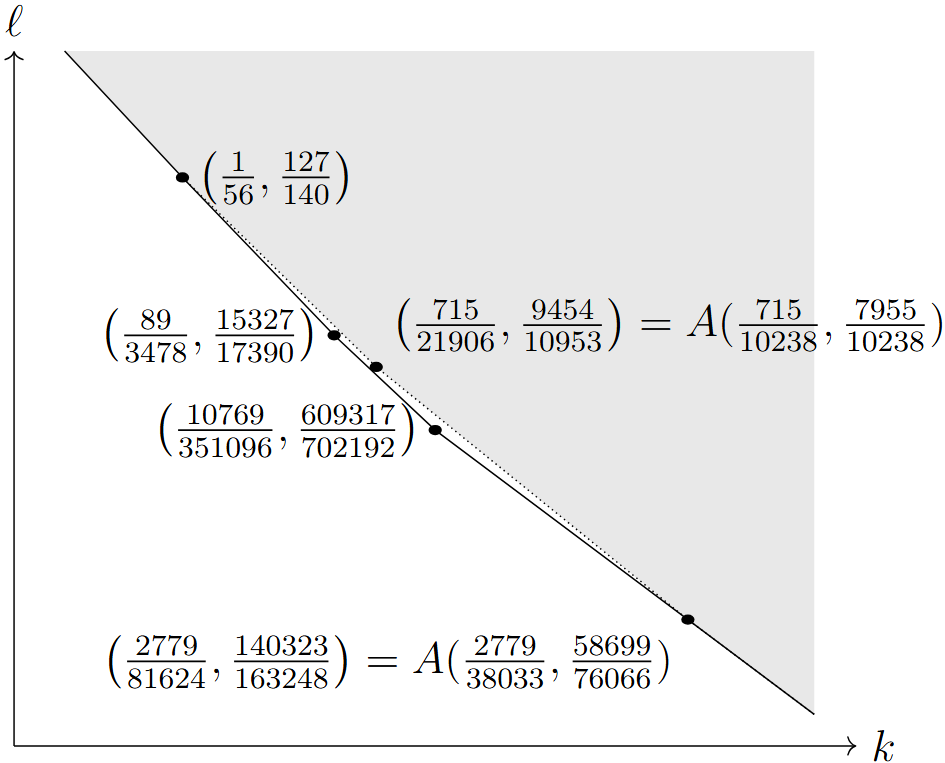}
\caption{Indicative plot of the new exponent pairs $\left(\frac{10769}{351096}, \frac{609317}{702192}\right)$ and $\left(\frac{89}{3478}, \frac{15327}{17390}\right)$ compared to previously known set of exponent pairs (shaded grey).}
\label{fig2}
\end{figure}
Lastly, for future reference, we also record a weaker exponent pair, needed for \Cref{hb-density2} below, that can be obtained from older results in the literature:

\begin{theorem}[Old exponent pair]\label{old-exp-pair} $(\frac{3}{40}, \frac{31}{40})$ is an exponent pair.
\end{theorem}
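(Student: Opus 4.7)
The plan is to invoke \Cref{beta-duality}, which reduces the claim that $(3/40, 31/40)$ is an exponent pair to the pointwise inequality
\[
\beta(\alpha) \le \frac{3}{40} + \frac{7}{10}\alpha \qquad \text{for all } 0 \le \alpha \le 1.
\]
Since $\ell - k = 28/40 = 7/10 \ge 1/2$, the Remark following \Cref{beta-duality} allows us to restrict attention to the range $0 \le \alpha \le 1/2$; the bound on $[1/2,1]$ then follows automatically from the functional equation \eqref{beta-reflect}.

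The most direct route is to compare the candidate linear bound $3/40 + 7\alpha/10$ with the piecewise-linear upper bound on $\beta(\alpha)$ tabulated in \Cref{beta-table}. On each subinterval of $[0, 1/2]$ in that partition, both the tabulated $\beta$ estimate and the target function are linear in $\alpha$, so the required comparison reduces to checking the inequality at the (finitely many) breakpoints of the table. This is a mechanical verification.

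Alternatively, and in keeping with the paper's description of this as an ``old'' exponent pair, one can exhibit $(3/40, 31/40)$ as the output of a short composition of the $A$- and $B$-processes of \Cref{exp-process} applied to a classical pair such as $(1/2,1/2)$, $(1/6,2/3)$, or $(11/30, 8/15)$, which would verify the required $\beta$-bound via \Cref{beta-duality} applied to the intermediate pairs. Either route is essentially bookkeeping: the main (minor) obstacle is simply to produce the explicit chain of processes or the explicit verification at each breakpoint without arithmetic error, and this is precisely the kind of task that the polytope-based routine in the ANTEDB Python module is designed to carry out automatically.
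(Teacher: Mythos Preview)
Your first route, via \Cref{beta-duality} and the piecewise bounds in \Cref{beta-table}, does work if one actually carries out the endpoint checks, but it takes a genuinely different path from the paper: \Cref{beta-table} draws on inputs as recent as Bourgain's pair $(13/84,55/84)$ and the $D$-process, whereas the point of labeling this result an ``old'' exponent pair is that it is needed upstream of \Cref{hb-density2} and must be derivable from pre-existing literature. The paper instead produces an explicit convex combination
\[
\left(\tfrac{3}{40},\tfrac{31}{40}\right)=xy\,AW+(1-x)y\,ABAW+(1-y)\,W,\qquad x=\tfrac{37081}{40415},\ y=\tfrac{476897}{493711},
\]
where $W=(\tfrac{89}{560},\tfrac12+\tfrac{89}{560})$ is Watt's 1989 exponent pair, and then appeals to \Cref{exp-process} and convexity. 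Your route has the advantage of being conceptually uniform (one inequality for all $\alpha$); the paper's has the advantage of using the minimal historical input, which matters for the narrative around \Cref{hb-density2} and \Cref{mu-bound}.

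Your second route, however, has a genuine gap. You assert that $(3/40,31/40)$ can be reached by $A$- and $B$-processes from ``a classical pair such as $(1/2,1/2)$, $(1/6,2/3)$, or $(11/30,8/15)$'', but you do not exhibit the chain, and in fact the purely classical van der Corput region (the convex hull of $A,B$-iterates of $(0,1)$) does not contain $(3/40,31/40)$. For example, at $\alpha=1/3$ the target bound is $\beta(1/3)\le 37/120$, and one can check that the nearby classical pairs $(1/14,11/14)$, $(1/9,13/18)$, $(1/20,33/40)$ give at best $37/120$ with no slack, while failing for $\alpha$ slightly larger; no convex combination of these closes the gap on all of $[1/3,13/32]$. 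Watt's pair $W$ lies strictly inside the classical region and is precisely what is needed here --- this is why the paper singles it out. So if you want to stay in the spirit of an ``old'' proof, you must invoke $W$ (or something of comparable strength), not merely the van der Corput iterates.
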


\begin{proof} The following proof was obtained by computer assistance, using the earliest possible result in the literature.

It was shown in \cite{watt_exponential_1989} that $W \coloneqq (\frac{89}{560}, \frac{1}{2} + \frac{89}{560})$ is an exponent pair.  Direct calculation shows that
$$ \left(\frac{3}{40}, \frac{31}{40}\right) = xy AW + (1-x)y ABAW + (1-y) W$$
with $x = 37081/40415$ and $y = 476897/493711$.  The claim now follows from \Cref{exp-process} and the evident convexity of exponent pairs.
\end{proof}

\section{Growth exponents for the Riemann zeta-function}

We record our notation for the growth exponents of the Riemann zeta-function:

\begin{definition}[Growth rate of $\zeta(s)$]\label{zeta-grow-def}  For any fixed $\sigma \in \R$, let $\mu(\sigma)$ denote the least possible (fixed) exponent for which one has the bound
    $$ |\zeta(\sigma+it)| \ll |t|^{\mu(\sigma)+o(1)}$$
    for all unbounded $t$.
\end{definition}

Non-asymptotically, $\mu(\sigma)$ is the least quantity such that for every $\eps>0$ there exists $C_\eps>0$ for which one has the bound
$$ |\zeta(\sigma+it)| \leq C_\eps |t|^{\mu(\sigma)+\eps}$$
whenever $|t| \geq C_\eps$.  Equivalently, one can define $\mu$ by the formula \eqref{mu-def}.

It is classical (see, e.g., \cite[Chapter 5]{titchmarsh_theory_1986}) that $\mu$ is convex, obeys the functional equation \eqref{mu-func}, and obeys the lower bound
$$
\mu(\sigma) \geq \max(0, 1/2-\sigma)
$$
for all $\sigma \in \R$.  The Lindel\"of hypothesis asserts that this lower bound is always sharp; this is known for $\sigma \leq 0$ or $\sigma \geq 1$.  Many partial results towards this conjecture, in the form of upper bounds for $\mu$, are known; we refer the reader to the ANTEDB for a current listing of all known bounds.

We have the following well-known relation between exponent pairs and $\mu$:

\begin{lemma}[Exponent pairs and $\mu$]\label{exp-pair-mu} If $(k,\ell)$ is an exponent pair, then $\mu(\ell-k) \leq k$.
\end{lemma}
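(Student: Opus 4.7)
The plan is to reduce the bound on $|\zeta(\sigma+it)|$ with $\sigma = \ell - k$ to a standard dyadic estimate on a truncated Dirichlet polynomial, and then apply the exponent pair bound to each dyadic block. First, I would invoke a standard Euler--Maclaurin truncation (see e.g.\ \cite[Theorem 4.11]{titchmarsh_theory_1986}) to write, for unbounded $t$ and $\sigma \in [0,1]$,
\begin{equation*}
    \zeta(\sigma+it) = \sum_{n \leq t} n^{-\sigma-it} + O(1),
\end{equation*}
so it suffices to bound the main sum by $t^{k+o(1)}$. (Note $\sigma \in [0,1]$ is forced by the triangle \eqref{exp-pair-triangle}: $\sigma = \ell - k \geq 0$ since $k \leq 1/2 \leq \ell$, and $\sigma \leq \ell \leq 1$.)

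Next, I would dyadically decompose $\sum_{n \leq t} n^{-\sigma-it}$ into $O(\log t)$ blocks of the form $\sum_{n \in I_N} n^{-\sigma-it}$ for intervals $I_N \subset [N,2N]$ with $N \leq t$ a dyadic scale. For each fixed block, the key observation is that $F(u) \coloneqq \log u$ is a model phase function (Example \ref{phase-ex}, with parameter $\sigma=1$ in \Cref{phase-def}), and $e(TF(n/N)) = (n/N)^{2\pi i T}$. Taking $T \coloneqq t/(2\pi)$, which is unbounded and satisfies $T \geq N \geq 1$ once $t$ is large enough, Definition \ref{exp-pair-def} combined with complex conjugation gives
\begin{equation*}
    \bigg| \sum_{n \in J} n^{-it} \bigg| \ll (t/N)^{k+o(1)} N^{\ell+o(1)}
\end{equation*}
uniformly for all subintervals $J \subset [N,2N]$ (uniformity being automatic by \Cref{auto}).

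I would then apply Abel summation to the block, using the smooth weight $n^{-\sigma}$ (of derivative $\ll N^{-\sigma-1}$): this gives
\begin{equation*}
    \bigg|\sum_{n \in I_N} n^{-\sigma-it}\bigg| \ll N^{-\sigma}\cdot (t/N)^{k+o(1)}N^{\ell+o(1)} = t^{k+o(1)} N^{\ell-k-\sigma+o(1)} = t^{k+o(1)},
\end{equation*}
where the exponent of $N$ vanishes thanks to the choice $\sigma = \ell-k$. Summing over the $O(\log t) = t^{o(1)}$ dyadic scales absorbs the logarithm into $o(1)$ and yields $|\zeta(\sigma+it)| \ll t^{k+o(1)}$, which by \Cref{zeta-grow-def} is exactly $\mu(\ell-k) \leq k$.

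The proof is essentially routine, so there is no genuine obstacle; the only points requiring mild care are (i) verifying that $\log u$ qualifies as a model phase function (which is \Cref{phase-ex}) and that the exponent pair bound applies to arbitrary subintervals of $[N,2N]$ with uniform implied constants across the dyadic scales, handled by the underspill principle and \Cref{auto}; and (ii) checking that the error term $O(1)$ from the Euler--Maclaurin truncation and the logarithmic loss from dyadic summation are both absorbed into the $t^{o(1)}$ slack permitted by the asymptotic convention of \Cref{epsilon-loss}.
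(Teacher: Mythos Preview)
Your proof is correct and is precisely the standard argument referenced by the paper, which simply cites \cite[(7.57)]{ivic} for this lemma; your approximate functional equation plus dyadic decomposition plus Abel summation against the exponent pair bound is exactly how that reference proceeds. There is nothing to add.
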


\begin{proof} See e.g. \cite[(7.57)]{ivic}.
\end{proof}

Thus for instance the exponent pair conjecture implies the Lindel\"of conjecture.  Of course, one could combine this lemma with \Cref{new-exp-pair} or \Cref{old-exp-pair}.  We just record the latter combination below.

\begin{corollary}[Bound on $\mu$]\label{mu-bound}  We have $\mu(7/10) \leq 3/40$.
\end{corollary}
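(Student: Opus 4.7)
The proof is an immediate substitution, so the plan is short. I would apply Lemma \ref{exp-pair-mu} to the exponent pair supplied by Theorem \ref{old-exp-pair}, namely $(k,\ell) = (3/40, 31/40)$. Then $\ell - k = 31/40 - 3/40 = 28/40 = 7/10$, and the lemma yields $\mu(\ell-k) \leq k$, i.e., $\mu(7/10) \leq 3/40$, which is exactly the claim.

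There is essentially no obstacle here: the only nontrivial input is Theorem \ref{old-exp-pair} itself (whose proof is carried out just above by decomposing $(3/40,31/40)$ as a convex combination of images of Watt's pair $W$ under the $A$- and $B$-processes), and Lemma \ref{exp-pair-mu}, which is the classical passage from an exponent pair to a bound on $\mu$ via an approximate functional equation / Perron-type argument as in \cite[(7.57)]{ivic}. So the step I would write is literally one line verifying the arithmetic $31/40 - 3/40 = 7/10$ and invoking the two results in sequence.
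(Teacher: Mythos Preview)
Your proposal is correct and matches the paper's intended argument exactly: the corollary is stated immediately after the remark that one combines Lemma~\ref{exp-pair-mu} with Theorem~\ref{old-exp-pair}, and your one-line verification $\ell-k=31/40-3/40=7/10$, hence $\mu(7/10)\le k=3/40$, is precisely that combination.
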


\section{Large value theorems}

It is well known that zero density estimates for $\zeta(s)$ (and other $L$-functions) are closely tied to large value theorems for Dirichlet polynomials; see for instance \cite{matomaki_teravainen_2024} for some recent discussion of this relationship.  We can formalise the latter by introducing the concept of a \emph{large value pattern}, as follows.  Call a sequence $a_n$ of complex numbers \emph{$1$-bounded} if $|a_n| \leq 1$ for all $n$ in the sequence, and call a set $W$ of real numbers \emph{$1$-separated} if $|t-t'| \geq 1$ for all distinct $t,t' \in W$.

\begin{definition}[Large value pattern]\label{large-pattern-def} A \emph{large value pattern} is a tuple $$(N, T, V, (a_n)_{n \in [N,2N]}, J, W),$$ where $N > 1$ and $T, V > 0$ are real numbers, $a_n$ is a $1$-bounded sequence on $[N,2N]$, $J$ is an interval of length $T$, and $W$ is a $1$-separated subset of $J$ such that
    \begin{equation}\label{V-large}
            \left|\sum_{n \in [N,2N]} a_n n^{-it} \right| \geq V
    \end{equation}
    for all $t \in W$.

    A \emph{zeta large value pattern} is a large value pattern in which $J = [T,2T]$ and $a_n = 1_I(n)$ for some interval $I \subset [N,2N]$, so that
    \begin{equation*}\label{V-large-zeta}
        \left|\sum_{n \in I} n^{-it} \right| \geq V
\end{equation*}
for all $t \in W$.
\end{definition}

\begin{remark}  In the literature, one also considers large value patterns in which the sequence $a_n$ is not controlled in an $\ell^\infty$ sense, but rather in an $\ell^2$ sense.  However, the $\ell^\infty$ control is sufficient for many applications of interest, particularly as we are permitting epsilon losses in the exponents (which in particular allows one to use the elementary divisor bound $\tau(n) \ll n^{o(1)}$ to ignore any losses relating to divisor sums).  Such $\ell^2$-variants of large value patterns are not currently studied in the ANTEDB, but could conceivably be addressed in a future expansion of the database.
\end{remark}

The key quantity of interest is then

\begin{definition}[Large value exponent]\label{lv-def} Let $1/2 \leq \sigma \leq 1$ and $\tau \geq 0$ be fixed. We define $\LV(\sigma,\tau) \in [-\infty,\infty)$ to be the least fixed quantity for which the following claim is true: whenever $(N,T,V,(a_n)_{n \in [N,2N]},J,W)$ is a large value pattern with $N>1$ unbounded, $T = N^{\tau+o(1)}$, and $V = N^{\sigma+o(1)}$, then
$$ |W| \ll N^{\LV(\sigma,\tau)+o(1)}.$$
We define $\LV_\zeta(\sigma,\tau)$ similarly, but where $(N,T,V,(a_n)_{n \in [N,2N]},J,W)$ is now required to be a zeta large value pattern.
\end{definition}

As with previous definitions, this exponent can be expressed non-asymptotically: $\LV(\sigma,\tau)$ is the infimum of all real numbers $\rho$ such that for every (fixed)  $\eps>0$ there exists $C, \delta>0$ such that if
$$(N,T,V,(a_n)_{n \in [N,2N]},J,W)$$
is a large value pattern with $N \geq C$ and $N^{\tau-\delta} \leq T \leq N^{\tau+\delta}$, $N^{\sigma-\delta} \leq V \leq N^{\sigma+\delta}$, then one has
$$ |W| \leq C N^{\rho+\eps}.$$
One can define $\LV_\zeta(\sigma,\tau)$ non-asymptotically in a similar fashion.
In the definition of $\LV(\sigma,\tau)$, one has the freedom to translate the interval $J$ by any shift $t_0$ by modulating the coefficients $a_n$ by $n^{-it_0}$.  As such, one can normalise $J$ to be $[0,T]$ without loss of generality for this quantity.  However, we do \emph{not} have this freedom for $\LV_\zeta(\sigma,\tau)$, since in that case the coefficients $a_n$ are required to equal $1$.

We begin with a discussion of the function $\LV(\sigma,\tau)$.
It is easy to see that $\LV(\sigma,0)=0$ for all $1/2 \leq\sigma \leq 1$.  We can also formalise the \emph{Huxley subdivision method} (see, e.g., \cite[Corollary 9.9]{ik}) as a functional inequality for $\LV$:

\begin{lemma}[Huxley subdivision]\label{hux-sub} If $1/2 \leq \sigma \leq 1$ and $0 \leq \tau \leq \tau'$, then
    $$ \LV(\sigma,\tau) \leq \LV(\sigma,\tau') \leq \LV(\sigma,\tau) + \tau'-\tau.$$
\end{lemma}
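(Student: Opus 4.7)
Both inequalities follow directly from the definition of $\LV(\sigma,\tau)$, with the first being an embedding argument and the second being the standard Huxley subdivision. Neither step involves any analytic input; the entire content is bookkeeping of parameters in the definition of a large value pattern, combined with the uniformity principle (\Cref{auto}) to manage the $o(1)$ errors.

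For the first inequality $\LV(\sigma,\tau) \leq \LV(\sigma,\tau')$, I would start from an arbitrary large value pattern $(N,T,V,(a_n)_{n \in [N,2N]},J,W)$ realising the $\LV(\sigma,\tau)$ regime, i.e.\ with $T = N^{\tau+o(1)}$ and $V = N^{\sigma+o(1)}$, and then simply enlarge the host interval. Concretely, pick any interval $J' \supset J$ of length $T' \coloneqq N^{\tau'+o(1)}$ (for instance by extending $J$ to the right). Since $\tau \leq \tau'$, this is possible, and the tuple $(N,T',V,(a_n)_{n\in[N,2N]},J',W)$ is again a large value pattern with the same set $W$. By the definition of $\LV(\sigma,\tau')$ one therefore has $|W| \ll N^{\LV(\sigma,\tau')+o(1)}$, and since the starting pattern was arbitrary this gives $\LV(\sigma,\tau) \leq \LV(\sigma,\tau')$.

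For the second inequality, I would take a large value pattern $(N,T',V,(a_n),J',W)$ with $T' = N^{\tau'+o(1)}$ and $V = N^{\sigma+o(1)}$, and partition $J'$ into $J \asymp N^{\tau'-\tau+o(1)}$ subintervals $J_1,\dots,J_J$ each of length $T = N^{\tau+o(1)}$ (we may assume $T \geq 1$, otherwise the claim is trivial since $W$ is $1$-separated). Setting $W_j \coloneqq W \cap J_j$, each tuple $(N,T,V,(a_n),J_j,W_j)$ is a large value pattern in the $\LV(\sigma,\tau)$ regime, so by definition $|W_j| \ll N^{\LV(\sigma,\tau)+o(1)}$. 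Summing over $j$ gives
\[
|W| = \sum_{j=1}^{J} |W_j| \ll N^{\tau'-\tau+o(1)} \cdot N^{\LV(\sigma,\tau)+o(1)} = N^{\LV(\sigma,\tau)+\tau'-\tau+o(1)},
\]
and the claim follows.

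The only subtlety, and what I would call the ``main obstacle'' (though it is really only a bookkeeping issue), is ensuring that the $o(1)$ error from the bound $|W_j| \ll N^{\LV(\sigma,\tau)+o(1)}$ can be chosen uniformly in the index $j$, since the number of subintervals grows with $N$. This is precisely what \Cref{auto}(ii) provides after passing to a subsequence: since each $W_j$ is itself a variable object parametrised by $j \in \{1,\dots,J\}$, the decay rate in the bound is automatically uniform, and so summing over the $N^{\tau'-\tau+o(1)}$ subintervals produces only a further $N^{o(1)}$ loss that is absorbed into the final $o(1)$ term.
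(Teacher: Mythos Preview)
Your proof is correct and follows the same approach as the paper, which dispatches the lemma in a single sentence by noting that an interval of length $N^{\tau'+o(1)}$ subdivides into $N^{\tau'-\tau+o(1)}$ intervals of length $N^{\tau+o(1)}$. You have simply filled in the details the paper omits, including the embedding argument for the first inequality and the uniformity bookkeeping via \Cref{auto} (which could alternatively be handled by passing to the index $j$ maximising $|W_j|$).
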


\begin{proof} This is clear since any interval of length $N^{\tau'+o(1)}$ can be subdivided into $N^{\tau'-\tau+o(1)}$ intervals of length $N^{\tau+o(1)}$.
\end{proof}

We stress that the subdivision inequality is only available for $\LV(\sigma,\tau)$ and not for $\LV_\zeta(\sigma,\tau)$, due to the aforementioned lack of freedom to translate the interval $J$ in the latter case.

\Cref{hux-sub} leads to a lower bound for $\LV(\sigma,\tau)$ in general:

\begin{lemma}[Lower bound]\label{lv-lower} For $\sigma=1/2$, one has $\LV(\sigma,\tau)=\tau$.  For $1/2 < \sigma \leq 1$, one has $\LV(\sigma,\tau) \geq \min(2-2\sigma,\tau)$.
\end{lemma}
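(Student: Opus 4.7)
The plan is to establish the trivial upper bound $\LV(1/2,\tau)\leq\tau$ and then construct large value patterns realising the claimed lower bounds; for $1/2<\sigma\leq 1$, this further reduces to a single endpoint via Huxley subdivision.

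For $\sigma=1/2$, the upper bound is immediate: $W$ is $1$-separated in an interval of length $T=N^{\tau+o(1)}$, so $|W|\leq T+1\ll N^{\tau+o(1)}$. For the matching lower bound, I take $J=[0,T]$, let $W_0\subset J$ be a $1$-separated set of $\asymp T$ integers, and take $a_n$ to be i.i.d. random signs in $\{\pm 1\}$. A short calculation gives $\mathbb{E}|D(t)|^2=N$ and $\mathbb{E}|D(t)|^4\ll N^2$, so Paley--Zygmund yields $\mathbb{P}(|D(t)|\geq \sqrt N/2)\geq c$ for some absolute $c>0$ and each fixed $t$. Linearity of expectation then produces a realisation in which at least $c|W_0|\gg N^{\tau+o(1)}$ points $t\in W_0$ satisfy $|D(t)|\geq N^{1/2+o(1)}$, giving the required large value pattern.

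For $1/2<\sigma\leq 1$, \Cref{hux-sub} reduces matters to proving $\LV(\sigma,2-2\sigma)\geq 2-2\sigma$: monotonicity handles $\tau\geq 2-2\sigma$, while rearranging the Lipschitz upper bound gives $\LV(\sigma,\tau)\geq\LV(\sigma,2-2\sigma)-(2-2\sigma-\tau)\geq\tau$ for $\tau\leq 2-2\sigma$. To build a pattern at the endpoint, set $T=N^{2-2\sigma}$, $K=\lfloor T\rfloor$, $t_k=k$ for $k=1,\ldots,K$, and consider the resonating coefficient
\[
 a_n = A^{-1}\sum_{k=1}^K \eps_k\, n^{it_k},
\]
with i.i.d. random signs $\eps_k\in\{\pm 1\}$ and normalisation $A=C\sqrt{K\log N}$. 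A Hoeffding-type bound (or Khintchine's inequality) ensures $|a_n|\leq 1$ simultaneously for all $n\in[N,2N]$ with probability $\geq 1/2$. At $t=t_j$ one has $D(t_j)=A^{-1}(\eps_j N+R_j)$, where $R_j=\sum_{k\neq j}\eps_k S_{k,j}$ and $S_{k,j}=\sum_{n\in[N,2N]} n^{i(t_k-t_j)}$. The van der Corput-type bound $|S_{k,j}|\ll N/|t_k-t_j|$ together with $1$-separation of the $t_k$ yields $\sum_{k\neq j}|S_{k,j}|^2\ll N^2$, and a second Paley--Zygmund argument then gives $|D(t_j)|\geq cN/A = cN^{\sigma}/\sqrt{\log N}=N^{\sigma+o(1)}$ with probability $\geq c'$. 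Linearity of expectation, combined with the $|a_n|\leq 1$ event, produces a realisation with $|W|\gg N^{2-2\sigma+o(1)}$ good points $t_j$.

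The main obstacle is this last step: simultaneously ensuring uniform $\ell^\infty$ control $|a_n|\leq 1$ and a large-values event holding at many $t_j$ in a single realisation of the random signs. This is handled by a joint moment or conditional probability argument but is the most technical ingredient; all other steps are either routine bookkeeping or standard applications of Huxley subdivision and van der Corput estimates.
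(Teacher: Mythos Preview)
Your proposal is correct, and for $\sigma=1/2$ it matches the paper's argument (random $\pm1$ coefficients plus a second-moment/concentration bound). For $1/2<\sigma\leq 1$, however, you take a genuinely different route.

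The paper's construction is simpler: it partitions $[N,2N]$ into $\asymp N^{2-2\sigma}$ blocks of length $\asymp N^{2\sigma-1}$, assigns a single random sign $\epsilon_I$ to each block, and observes that for $t=o(N^{2-2\sigma})$ the phase $t\log n$ is essentially constant on each block, so $\sum_{n\in I} a_n n^{-it}\approx \epsilon_I \cdot c_{t,I}$ with $|c_{t,I}|\asymp N^{2\sigma-1}$. The Dirichlet polynomial is then a random signed sum of $N^{2-2\sigma}$ terms of size $N^{2\sigma-1}$, and Chernoff gives size $\gg N^\sigma$ with positive probability. Crucially, the coefficients are $\pm1$ by construction, so the $\ell^\infty$ constraint is automatic and the ``main obstacle'' you identify simply does not arise.

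Your resonating construction $a_n=A^{-1}\sum_k \eps_k n^{it_k}$ is a valid dual approach (reminiscent of Riesz-type extremisers), and your Paley--Zygmund and Hoeffding steps go through; the joint event can indeed be handled by taking $C$ large so that $P(|a_n|>1\text{ for some }n)$ is $o(1)$, then using $\mathbb{E}[X\cdot 1_{E_a}]\geq \mathbb{E}X-K\cdot P(E_a^c)$. But this machinery is heavier than needed: the paper's block-sign trick sidesteps the normalisation and the simultaneous-event issue entirely.
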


\begin{proof}  For the $\sigma=1/2$ case, one can set $a_n = \pm 1$ to be random signs, and then a routine application of the Chernoff inequality shows that $\left|\sum_{n \in [N,2N]} a_n n^{-it} \right| = N^{1/2+o(1)}$ with high probability for any given $t$, which gives the claim. 

For $1/2 < \sigma \leq 1$, we can modify this argument as follows.  By \Cref{hux-sub} it suffices to show that $\LV(\sigma,2-2\sigma) \geq 2-2\sigma$.  Divide the interval $[N,2N]$ into $\asymp N^{2-2\sigma}$ intervals  $I$ of length $\asymp N^{2\sigma-1}$.  On each interval $I$, we choose $a_n$ to equal some randomly chosen sign $\epsilon_I \in \{-1,+1\}$, with the $\epsilon_I$ chosen independently in $I$.  If $t = o(N^{2-2\sigma})$, then $\sum_{n \in I} a_n n^{-it}$ is equal to $\epsilon_I$ times a deterministic quantity $c_{t,I}$ of magnitude $\asymp N^{2\sigma-1}$ (the point being that the phase $t \log n$ is close to constant in this range).  By the Chernoff bound, we thus see that for any such $t$, $\sum_{n \in [N,2N]} a_n n^{it}$ will have size $\gg N^{(2\sigma-1) + (2-2\sigma)/2} = N^\sigma$ with probability $\gg 1$. By linearity of expectation, we thus see that with positive probability, a $\gg 1$ fraction of integers $t$ with $t = o(N^{2-2\sigma})$ will have this property, giving the claim.
\end{proof}

The \emph{Montgomery conjecture} essentially asserts that this lower bound is sharp:

\begin{conjecture}[Montgomery conjecture]\label{montgomery-conj'}
One has $\LV(\sigma, \tau) = \min(2 - 2 \sigma,\tau)$ for all $1/2 < \sigma \leq 1$ and $\tau \geq 0$.  Equivalently (thanks to \Cref{lv-lower} and the trivial bound $\LV(\sigma,\tau) \leq \tau$), one has $\LV(\sigma,\tau) \leq 2-2\sigma$ for all $1/2 < \sigma \leq 1$ and $\tau \geq 0$.
\end{conjecture}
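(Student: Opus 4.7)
The Montgomery conjecture is a famous open problem, so I can only describe a plausible line of attack rather than a complete plan. First I would reduce to the key endpoint. By the trivial bound $\LV(\sigma,\tau) \le \tau$ one already has the conjecture in the range $0 \le \tau \le 2-2\sigma$, and by Huxley subdivision (\Cref{hux-sub}) the inequality $\LV(\sigma,\tau) \le 2-2\sigma$ at any single value $\tau = \tau_0 \ge 2-2\sigma$ propagates \emph{backward} (monotonicity gives it for all smaller $\tau$), but not forward. So subdivision alone cannot bridge the regimes, and the real content of the conjecture is the uniform bound $\LV(\sigma,\tau) \le 2-2\sigma$ for \emph{all} large $\tau$. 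Any attack must therefore produce an estimate whose strength does not degrade as $\tau \to \infty$.

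The natural tool is the $2k$-th moment method. Given a large value pattern with $|D(t)| \ge V = N^{\sigma+o(1)}$ on a $1$-separated set $W \subset J$, the plan is to expand
\[
\sum_{t \in W} |D(t)|^{2k} = \sum_{\substack{n_1,\dots,n_k \\ m_1,\dots,m_k \in [N,2N]}} a_{n_1}\cdots a_{m_k} \sum_{t \in W} \Bigl(\tfrac{n_1\cdots n_k}{m_1\cdots m_k}\Bigr)^{-it},
\]
apply the $1$-separation of $W$ inside $J$ (of length $T = N^{\tau+o(1)}$) to bound the inner sum by the number of near-solutions of $n_1\cdots n_k = m_1\cdots m_k (1+O(N^k/T))$, and then use the divisor bound $\tau(n) \ll n^{o(1)}$ and a count of multiplicative energy among $k$-fold products to control this. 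Balancing $V^{2k} |W|$ against the resulting combinatorial bound, one formally gets $|W| \ll N^{2k-2k\sigma + o(1)}$ provided $k$ can be taken large enough (depending on $\sigma$) that the error $N^k/T$ becomes negligible — which is precisely the range $\tau \ge k$ or so.

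The main obstacle — and the reason the conjecture remains open — is the multiplicative energy count. Once $k$ becomes large enough to absorb the $T$-dependence, the number of tuples with $n_1\cdots n_k = m_1\cdots m_k$ in $[N,2N]$ grows faster than the Montgomery bound permits: the diagonal contribution of permutations already saturates, and off-diagonal contributions are governed by divisor-function moments that are not known to give the sharp exponent. Bypassing this would require a novel input, such as the additive/energy decompositions pioneered by Heath-Brown and pushed much further by Guth--Maynard, refined so as to work uniformly in $\tau$ and not merely in some sub-range of $\sigma$ near $1$. A realistic partial plan, therefore, would be to combine a high-moment estimate with an $\LV^*(\sigma,\tau)$-type additive energy bound from the ANTEDB framework (once sufficiently strong bounds on $\LV^*$ are established), and use the ensuing interplay — rather than the moment method in isolation — to close the endpoint case $\tau = 2-2\sigma$; iteration with \Cref{hux-sub} then yields the full conjecture. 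The unconditional production of such an $\LV^*$ bound is exactly where the argument is expected to break.
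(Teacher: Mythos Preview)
This statement is a \emph{conjecture}, not a theorem: the paper does not prove it, nor does it claim to. You correctly identify this at the outset, and your discussion of the obstructions (the $2k$-th moment method yielding $k(2-2\sigma)$ rather than $2-2\sigma$, the multiplicative energy of $k$-fold products growing too fast) is a fair informal account of why the problem remains open. There is nothing in the paper to compare against beyond the remark that the conjecture is equivalent to the bound \eqref{montgomery-alt} and the partial results (\Cref{l2-mvt}, \eqref{huxley-lvt}, \eqref{hb-opt}, \eqref{jutila-lvt}, \eqref{guth-maynard-lvt}) that verify it in restricted ranges of $\tau$.

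One minor wobble in your heuristic: your final paragraph speaks of ``closing the endpoint case $\tau = 2-2\sigma$'' and then iterating with \Cref{hux-sub} to obtain the full conjecture. But $\tau = 2-2\sigma$ is already covered by the trivial bound $\LV(\sigma,\tau) \le \tau$, and as you yourself correctly observe earlier, subdivision only propagates the bound $\LV(\sigma,\tau)\le 2-2\sigma$ \emph{backward} in $\tau$, not forward. So no single endpoint plus subdivision can yield the conjecture; one genuinely needs a bound uniform in arbitrarily large $\tau$, which is precisely the core difficulty you identified in your first paragraph.
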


One can phrase this conjecture in the equivalent form
\begin{equation}\label{montgomery-alt}
     \sum_{t \in W} \bigg|\sum_{n \in [N,2N]} a_n n^{-it}\bigg|^2 \ll N^{1+o(1)} (T + N) \bigg(\sup_{n \in [N,2N]} |a_n|\bigg)^2
\end{equation}
for any unbounded $N$, any coefficients $a_n$, and any $1$-separated subset $W$ of an interval $J$ of length $T = N^{O(1)}$. Indeed, \eqref{montgomery-alt} can be easily seen to imply \Cref{montgomery-conj'} using Markov's inequality, while, the converse implication follows from a standard dyadic decomposition of $W$ into $O(\log N) = O(N^{o(1)})$ components depending on the dyadic size of $|\sum_{n \in [N,2N]} a_n n^{-it}|$. The conjecture \eqref{montgomery-alt} appears (in slightly different notation) in \cite[(1.6)]{bourgain_montgomery_1991} and is attributed to Montgomery as a correction of his original conjecture in \cite{montgomery_topics_1971}, which was shown in the former paper to be too strong to be completely correct.  In that paper it was also shown that the conjecture \eqref{montgomery-alt} implies the Kakeya conjecture in geometric measure theory.

From the trivial upper bound $\LV(\sigma,\tau) \leq \tau$ we see that the Montgomery conjecture is true for $\tau \leq 2-2\sigma$.  The range of validity of the conjecture can be extended.  For instance, we have

\begin{theorem}[$L^2$ mean value theorem]\label{l2-mvt} For any fixed $1/2 \leq \sigma \leq 1$ and $\tau\geq 0$ one has
    $$ \LV(\sigma,\tau) \leq \max( 2-2\sigma, 1 + \tau - 2 \sigma).$$
In particular, the Montgomery conjecture holds for $\tau \leq 1$.
\end{theorem}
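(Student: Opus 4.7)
The plan is to prove this by a direct application of the Montgomery--Vaughan $L^2$ mean value theorem for Dirichlet polynomials, which asserts that for any $1$-separated set $W$ of real numbers and any coefficients $(a_n)_{n \in [N,2N]}$,
$$ \sum_{t \in W} \bigg|\sum_{n \in [N,2N]} a_n n^{-it}\bigg|^2 \ll (T+N) \sum_{n \in [N,2N]} |a_n|^2, $$
where $T$ is the diameter of $W$ (or any containing interval). This is the classical ``large sieve for Dirichlet polynomials'' appearing in, e.g., \cite[Theorem 9.1]{ik}. I would cite it and use it essentially as a black box.

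Concretely, let $(N,T,V,(a_n)_{n \in [N,2N]},J,W)$ be a large value pattern with $N$ unbounded, $T = N^{\tau+o(1)}$, and $V = N^{\sigma+o(1)}$. Since $|a_n| \leq 1$, the right-hand side of the Montgomery--Vaughan estimate is $\ll (T+N) \cdot N \ll N^{\max(\tau,1)+1+o(1)}$. On the other hand, the large value hypothesis \eqref{V-large} gives the lower bound
$$ \sum_{t \in W} \bigg|\sum_{n \in [N,2N]} a_n n^{-it}\bigg|^2 \geq |W| V^2 = |W| N^{2\sigma+o(1)}. $$
Combining the two estimates yields
$$ |W| \ll N^{\max(\tau,1) + 1 - 2\sigma + o(1)} = N^{\max(2-2\sigma, 1+\tau-2\sigma)+o(1)}, $$
which is exactly the claimed bound on $\LV(\sigma,\tau)$.

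The final sentence of the theorem is an immediate consequence: if $\tau \leq 1$ then $\max(2-2\sigma, 1+\tau-2\sigma) = 2-2\sigma$, matching the Montgomery conjecture lower bound from \Cref{lv-lower}. There is no real obstacle here; the only mild care is in matching the asymptotic ``$o(1)$'' formulation of \Cref{lv-def} with the clean non-asymptotic Montgomery--Vaughan estimate, which is routine given the underspill principle. The argument does not make any use of the structure of $J$ beyond its length, so the same proof would work for $\LV_\zeta$ as well, although of course the statement is presented for $\LV$.
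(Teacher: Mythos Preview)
Your proof is correct and essentially identical to the paper's own argument: both invoke the Montgomery--Vaughan mean value inequality as a black box, lower-bound the left side by $|W|V^2$ via the large value hypothesis, and read off the exponent bound. The only cosmetic difference is that the paper cites \cite[Theorem~9.4]{ik} (the discrete large sieve over a $1$-separated set) rather than Theorem~9.1 (the integral mean value), which is the more precise reference for the statement you actually use.
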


\begin{proof}
Let $(N,T,V,(a_n)_{n \in [N,2N]},J,W)$ be a large value pattern with $T = N^{\tau + o(1)}$, $V = N^{\sigma + o(1)}$. Applying \cite[Theorem~9.4]{ik} (with $N$, $T$ replaced with $2N$, $2T$ respectively and taking $a_n = 0$ for $n < N$) one has
\[
|W|V^2 \le \sum_{t \in W} \bigg|\sum_{N \le n \le 2N} a_n n^{-it}\bigg|^2 \ll N^{1 + o(1)}(T + N).
\]
The result follows from comparing exponents.
\end{proof}

We record some further results of this type in the literature, again valid for any fixed  $1/2 \leq \sigma \leq 1$ and $\tau\geq 0$:
\begin{itemize}
\item[(i)]  The Huxley large values theorem \cite[Equation~(2.9)]{Huxley}, in our notation, gives the upper bound
\begin{equation}\label{huxley-lvt}
     \LV(\sigma,\tau) \leq \max( 2-2\sigma, 4 + \tau - 6 \sigma).
\end{equation}
In particular, one has the Montgomery conjecture for $\tau \leq 4 \sigma - 2$.
\item[(ii)] The Heath-Brown large values theorem \cite[p. 226]{heathbrown_zero_1979} similarly gives
\begin{equation}\label{hb-opt}
    \LV(\sigma,\tau) \leq \max( 2-2\sigma, 10 + \tau - 13 \sigma).
\end{equation}
In particular, the Montgomery conjecture holds for $\tau \leq 11 \sigma - 8$.
\item[(iii)] The Jutila large values theorem \cite[(1.4)]{jutila_zero_density_1977} gives, for any positive integer $k$, the bound
\begin{equation}\label{jutila-lvt}
    \LV(\sigma,\tau) \leq \max(2-2\sigma, \tau + (4-2/k) - (6-2/k)\sigma, \tau + (6-8\sigma)k).
\end{equation}
In particular, the Montgomery conjecture holds for
$$ \tau \leq \min( (4-2/k)\sigma - (2-2/k), (8k-2)\sigma - 6k + 2).$$
\end{itemize}

There are several further large value theorems in the literature, though not necessarily of a type that further extends the range of validity of the Montgomery conjecture; for instance, the recent result \cite[Theorem~1.1]{guth-maynard} of Guth and Maynard, in our notation, gives the bound
\begin{equation}\label{guth-maynard-lvt}
    \LV(\sigma,\tau) \leq \max(2-2\sigma, 18/5 - 4 \sigma, \tau + 12/5 - 4\sigma).
\end{equation}
We refer the reader to the ANTEDB for a current listing of all known large value theorems.

We now record a large value theorem of Bourgain \cite{bourgain_large_2000}, stated here in more generality than was explicitly stated in that paper, and restated in our notation for compatibility with the other results listed here.

\begin{theorem}[Bourgain large values theorem]\label{bourgain-lvt}\cite{bourgain_large_2000} Let $1/2 < \sigma < 1$ and $\tau > 0$, and let $\rho := \LV(\sigma,\tau)$.  Let $\alpha_1, \alpha_2 \geq 0$ be real numbers.
    \begin{itemize}
        \item[(i)]  Either
    \begin{equation}\label{rho1}
     \rho \leq \max( \alpha_2 + 2 - 2 \sigma, -\alpha_2 + 2\tau+4-8\sigma, -2\alpha_1 + \tau + 12 - 16 \sigma)
    \end{equation}
    or else there exists $s \geq 0$ such that
    \begin{equation}\label{rs}
    \begin{split}
         &\frac{1}{2}\max(\rho+2, 2\rho+1, 5\rho/4 + \tau/2 + 1) + \frac{1}{2}\max(s+2, 2s+1, 5s/4 + \tau/2 + 1) \geq \\
            &\qquad\max( -2\alpha_1 + 2\sigma + s + \rho, -\alpha_1 - \alpha_2/2 + 2\sigma + s/2 + 3\rho/2).
    \end{split}
    \end{equation}
        \item[(ii)] \cite[Lemma 4.60]{bourgain_large_2000} If we additionally assume $\rho \leq \min(1, 4-2\tau)$, then
        \begin{align*}
            \rho &\leq \max( \alpha_2 + 2 - 2 \sigma, \alpha_1+\alpha_2/2 + 2-2\sigma, -\alpha_2 + 2\tau+4-8\sigma, \\
            &\quad -2\alpha_1 + \tau + 12 - 16 \sigma, 4\alpha_1 + 2+\max(1,2\tau-2)-4\sigma).
        \end{align*}
\end{itemize}
    \end{theorem}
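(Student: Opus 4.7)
The plan is to adapt Bourgain's original argument in \cite{bourgain_large_2000} to our asymptotic framework, keeping $\alpha_1,\alpha_2 \ge 0$ as free parameters that will be optimised later. Fix a large value pattern $(N,T,V,(a_n)_{n \in [N,2N]},J,W)$ realising the exponent $\rho = \LV(\sigma,\tau)$, so $|W| = N^{\rho+o(1)}$, $T = N^{\tau+o(1)}$, and $V = N^{\sigma+o(1)}$. The starting point is a moment inequality of the form
\[
|W|^2 V^4 \;\le\; \sum_{t,t' \in W}\bigg|\sum_{n \in [N,2N]} a_n n^{-i(t-t')}\bigg|^4
\]
(or a higher-moment variant), followed by a multiplicative decomposition $n = n_1 n_2$ with $n_1 \asymp N^{\alpha_1}$ and $n_2 \asymp N^{1-\alpha_1}$, with divisor losses absorbed into $N^{o(1)}$ factors as is permissible in our framework. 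This produces shorter exponential sums of length $\asymp N^{\alpha_1}$ and $\asymp N^{\alpha_2}$ to which one may recursively apply large value estimates.

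The key steps are then: (a) apply the $L^2$-mean value theorem or the Halász--Montgomery inequality to one of the shorter factors, producing contributions whose exponents match the three entries in the max in \eqref{rho1}, namely $\alpha_2+2-2\sigma$, $-\alpha_2+2\tau+4-8\sigma$, and $-2\alpha_1+\tau+12-16\sigma$; (b) recognise that the remaining shorter factor is itself governed by a large value exponent at scale $N^{\alpha_1}$, which after passing to a subsequence may be written as $s = \LV(\sigma',\tau')$ for suitable auxiliary parameters $\sigma',\tau'$; (c) bound $s$ and $\rho$ by applying \Cref{l2-mvt}, \eqref{huxley-lvt} and \eqref{hb-opt}, which after rearrangement produce the three candidates $\rho+2$, $2\rho+1$, $5\rho/4+\tau/2+1$ (and their $s$-analogues) appearing in the left-hand side of \eqref{rs}. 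Combining (a)--(c) and comparing the resulting exponent inequalities yields the stated dichotomy: either the iteration terminates and \eqref{rho1} holds directly, or the iteration continues and \eqref{rs} must hold for the relevant $s$.

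For part (ii), the hypothesis $\rho \le \min(1,4-2\tau)$ forces the ``trivial'' entries $\rho+2$ and $s+2$ to dominate the corresponding maxima in \eqref{rs}, effectively decoupling the recursive dependence on $s$; one then eliminates $s$ by optimising as in \cite[Lemma~4.60]{bourgain_large_2000}, which produces the two additional terms $\alpha_1+\alpha_2/2+2-2\sigma$ and $4\alpha_1+2+\max(1,2\tau-2)-4\sigma$ in the final maximum. The hard part is purely bookkeeping: each application of Hölder, divisor decomposition, and the recursive large value bound must be tracked carefully in terms of the free parameters $(\alpha_1,\alpha_2,s)$ so that the resulting exponent inequalities match the entries in \eqref{rho1} and \eqref{rs} exactly. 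Since Bourgain's paper executes this for specific numerical choices, the essentially new content is to verify that the argument extends verbatim with $\alpha_1,\alpha_2$ kept symbolic; this is routine but tedious, and is where an error in a hand computation would be most likely.
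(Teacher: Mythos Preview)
Your proposal misidentifies the source of the three terms $\rho+2$, $2\rho+1$, $5\rho/4+\tau/2+1$ in \eqref{rs}. These do \emph{not} come from applying the large value theorems \Cref{l2-mvt}, \eqref{huxley-lvt}, \eqref{hb-opt} to bound an auxiliary $\LV(\sigma',\tau')$. They arise instead from Heath--Brown's bound on the double zeta sum $S(N,W)$ (see \cite[Theorem~1]{heathbrown_large_1979} or \cite[Lemma~11.5]{ivic}), which gives
\[
S(N,W) \ll T^{o(1)}\bigl(|W|N^2 + |W|^2 N + |W|^{5/4}T^{1/2}N\bigr),
\]
and these three summands, in exponent form, are exactly $\rho+2$, $2\rho+1$, $5\rho/4+\tau/2+1$. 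The paper's proof follows Bourgain's argument literally: it invokes the specific inequalities \cite[(4.41), (4.42), (4.53), (4.54), (4.57)]{bourgain_large_2000}, which produce a set $S$ (not itself a large value pattern governed by some $\LV(\sigma',\tau')$) and then applies the double zeta sum bound to both $S(N,R^{(2)})$ and $S(N,S)$. Your ``multiplicative decomposition $n=n_1 n_2$'' picture does not match what Bourgain actually does with the parameters $\delta_1 = N^{-\alpha_1}$, $\delta_2 = N^{-\alpha_2}$.

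Your treatment of part~(ii) also contains an error. The hypothesis $\rho \le \min(1,4-2\tau)$ does force $\rho+2$ to dominate the first maximum, but it does \emph{not} force $s+2$ to dominate the second: one only gets
\[
\max(s+2,\,2s+1,\,5s/4+\tau/2+1) \le \max\bigl(s+2,\,2s+\max(1,2\tau-2)\bigr),
\]
since $5s/4+\tau/2+1$ is a convex combination of $s+2$ and $2s+2\tau-2$. One must then treat the two cases $(\rho+2)/2 + (s+2)/2 \ge -\alpha_1-\alpha_2/2+2\sigma+s/2+3\rho/2$ and $(\rho+2)/2 + (2s+\max(1,2\tau-2))/2 \ge -2\alpha_1+2\sigma+s+\rho$ separately; in each, $s$ cancels and one solves for $\rho$, yielding precisely the two extra terms $\alpha_1+\alpha_2/2+2-2\sigma$ and $4\alpha_1+2+\max(1,2\tau-2)-4\sigma$. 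Your claim that the recursive dependence on $s$ ``decouples'' glosses over this case split and would not, as written, produce the correct final bound.
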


    \begin{proof}  We begin with (i). By Definition \ref{lv-def}, we can find a large value pattern $(N,T,V,(a_n)_{n \in [N,2N]},J,R)$ with $N>1$ unbounded, $N \geq 1$, $T = N^{\tau+o(1)}$, $|R| = N^{\rho+o(1)}$, $V = N^{\sigma+o(1)}$; we use $R$ here instead of $W$ to be consistent with the notation from \cite{bourgain_large_2000}.  Now set $\delta_1 := N^{-\alpha_1}$, $\delta_2 := N^{-\alpha_2}$.  From \cite[(4.41), (4.42)]{bourgain_large_2000}, one has the inequality
        $$ |R| \leq |R^{(1)}| + |R^{(2)}|$$
        for certain sets $R^{(1)}$ and $R^{(2)}$ with the former set obeying the bound
    $$ |R^{(1)}| \ll \delta_2^{-1} N^2 V^{-2} + \delta_2 T^2 N^4 V^{-8} + \delta_1^2 T N^{12} V^{-16}.$$
    Hence, we either have
    $$ |R| \ll \delta_2^{-1} N^2 V^{-2} + \delta_2 T^2 N^4 V^{-8} + \delta_1^2 T N^{12} V^{-16}$$
    which implies \eqref{rho1}, or else
    \begin{equation}\label{rr}
        |R| \ll |R^{(2)}|.
    \end{equation}
    Henceforth we assume that \eqref{rr} holds. From \cite[(4.53), (4.54)]{bourgain_large_2000} we may  bound
    \begin{equation}\label{Exp-1}
         T^{-\eps} \delta' (\delta'')^2 V^2 |S| |R^{(2)}| + T^{-\eps} \delta_1 V^2 |S|^{1/2} \sum_\alpha |R_\alpha|^{3/2}
    \end{equation}
  thus
    \begin{equation*}\label{Exp-2}
        \ll T^\eps S(N,R^{(2)})^{1/2} S(N,S)^{1/2}
    \end{equation*}
    for arbitrarily small fixed $\eps$, some $\delta',\delta''>0$ with $\delta' > T^{-\eps} (\delta_1/\delta'')^2$ (see \cite[(4.37)]{bourgain_large_2000}), some set $S$ (which will be non-empty by \cite[(4.47)]{bourgain_large_2000}), and some sets $R_\alpha$ defined in \cite[(4.39)]{bourgain_large_2000}, where the double zeta sums $S(N,W)$ are defined in \eqref{snw}.  Applying \cite[Theorem 1]{heathbrown_large_1979} or \cite[Lemma 11.5]{ivic}, the latter expression is bounded by
    $$ \ll T^\eps (|R|N^2 + |R|^2 N + |R|^{5/4} T^{1/2} N)^{1/2} (|S| N^2 + |S|^2 N + |S|^{5/4} T^{1/2} N)^{1/2}.$$
    Meanwhile, from \cite[(4.57)]{bourgain_large_2000}, the expression \eqref{Exp-1} is bounded from below by
    $$ \gg T^{-2\eps} (\delta_1^2 V^2 |S| |R| + \delta_1 \delta_2^{1/2} V^2 |S|^{1/2} |R|^{3/2}).$$
    After passing to a subsequence, we can ensure that $|S| = N^{s+o(1)}$ for some $s > 0$.
    Combining these bounds and writing all expressions as powers of $N$, we obtain the claim (i) (after sending $\eps \to 0$).

    Now we prove (ii).  With $\rho \leq \min(1,4-2\tau)$, $5\rho/4+\tau/2+1$ and $2\rho+1$ are both bounded by $\rho+2$, hence
    $$ \max(\rho+2, 2\rho+1, 5\rho/4 + \tau/2 + 1) = \rho+2.$$
    Furthermore, $5s/4+\tau+1$ is a convex combination of $s+2$ and $2s + 2\tau-2$, hence
    $$\max(s+2, 2s+1, 5s/4 + \tau/2 + 1) \leq \max(s+2, 2s + \max(1,2\tau-2)).$$
    Thus \eqref{rs} simplifies to
    \begin{align*}
        &(\rho+2)/2 + \max(s+2, 2s+\max(1,2\tau-2))/2 \\
        &\quad \geq
        \max( -2\alpha_1 + 2\sigma + s + \rho, -\alpha_1 - \alpha_2/2 + 2\sigma + s/2 + 3\rho/2).
    \end{align*}
    Thus either
    $$(\rho+2)/2 + (s+2)/2 \geq -\alpha_1 - \alpha_2/2 + 2\sigma + s/2 + 3\rho/2$$
    or
    $$(\rho+2)/2 + (2s+\max(1,2\tau-2))/2 \geq  -2\alpha_1 + 2\sigma + s + \rho.$$
    In both cases we may eliminate $s$ and solve for $\rho$ to obtain
    $$ \rho \leq \alpha_1 + \alpha_2/2 + 2 - 2 \sigma $$
    or
    $$ \rho \leq 4\alpha_1 + 2 + \max(1,2\tau-2) - 4 \sigma,$$
    giving the claim.
    \end{proof}

The standard trick of increasing the range of applicability of large value theorems by raising Dirichlet series to powers can also be cleanly encoded in our framework:

\begin{lemma}[Raising to a power]\label{power-lemma} For any fixed $1/2 \leq \sigma \leq 1$, $\tau \geq 0$, and natural number $k$, one has
    $$ \LV(\sigma, k\tau) \leq k \LV(\sigma, \tau).$$
\end{lemma}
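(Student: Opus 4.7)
The plan is to exploit the standard trick of raising the Dirichlet polynomial to the $k$-th power, converting a large value pattern of length $N$ and time scale $N^{k\tau}$ into one of length $N^k$ and time scale $(N^k)^\tau$. Starting from a large value pattern $(N, T, V, (a_n)_{n \in [N,2N]}, J, W)$ realising $\LV(\sigma, k\tau)$, so that $N$ is unbounded, $T = N^{k\tau+o(1)}$, and $V = N^{\sigma+o(1)}$, I would form $D(t) \coloneqq \sum_{n \in [N,2N]} a_n n^{-it}$ and expand
\[
D(t)^k = \sum_{m \in [N^k, 2^k N^k]} b_m m^{-it}, \qquad b_m \coloneqq \sum_{\substack{n_1\cdots n_k = m \\ n_j \in [N,2N]}} a_{n_1}\cdots a_{n_k}.
\]
Since each $|a_n| \leq 1$, the elementary divisor bound gives $|b_m| \leq \tau_k(m) \ll N^{o(1)}$.

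Next I would dyadically decompose $[N^k, 2^k N^k]$ into $k$ intervals $[N_j, 2N_j]$ with $N_j = 2^j N^k \asymp N^k$. For each $t \in W$ the inequality $|D(t)|^k \geq V^k$ combined with the triangle inequality and pigeonhole produces an index $j = j(t)$ for which $|\sum_{m \in [N_j, 2N_j]} b_m m^{-it}| \gg V^k$. A further pigeonhole over the finitely many possible $j$ then extracts a subset $W' \subseteq W$ with $|W'| \gg |W|$ on which a single $j$ works uniformly. Using \Cref{auto}, after passing to a subsequence, we can upgrade the pointwise bound $|b_m| \ll N^{o(1)}$ to a uniform bound $|b_m| \leq N^\eta$ for some infinitesimal $\eta$; the rescaled coefficients $\tilde{b}_m \coloneqq N^{-\eta} b_m$ are then $1$-bounded on $[N_j, 2N_j]$.

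The tuple $(N_j, T, V', (\tilde{b}_m)_{m \in [N_j, 2N_j]}, J, W')$ with $V' \asymp V^k N^{-\eta}$ is then a large value pattern at the new scale $N_j = N^{k+o(1)}$, satisfying $T = N_j^{\tau+o(1)}$ and $V' = N_j^{\sigma+o(1)}$ (the normalisation factor $N^{-\eta}$ being absorbed into the $o(1)$ of the exponent). Applying \Cref{lv-def} at scale $N_j$ yields
\[
|W| \ll |W'| \ll N_j^{\LV(\sigma,\tau)+o(1)} = N^{k\LV(\sigma,\tau)+o(1)},
\]
as required. The only mildly delicate point is the clean absorption of the divisor loss $N^\eta$ into the $o(1)$ of the amplitude $V'$; the asymptotic formalism of \Cref{notation-sec}, and especially \Cref{auto}, is precisely designed to make this painless, whereas in a traditional $\varepsilon$-$\delta$ approach one would have to shepherd various epsilons through the dyadic pigeonholing by hand.
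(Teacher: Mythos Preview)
Your proof is correct and follows essentially the same approach as the paper: raise the Dirichlet polynomial to the $k$th power, use the divisor bound to control the new coefficients, dyadically split $[N^k,2^kN^k]$ into $k$ intervals of the form $[N',2N']$, and apply \Cref{lv-def} at the new scale. You are simply more explicit than the paper about the pigeonholing over the $k$ dyadic blocks and about invoking \Cref{auto} to absorb the $N^{o(1)}$ divisor loss into the amplitude exponent, which the paper handles in a single phrase.
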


\begin{proof} Let  $(N,T,V,(a_n)_{n \in [N,2N]},J,W)$ be a large value pattern with $T = N^{k\tau+o(1)}$ and $V = N^{\sigma+o(1)}$. Raising \eqref{V-large} to the $k^{\mathrm{th}}$ power, we conclude that
$$ \bigg|\sum_{n \in [N^k,2^kN^k]} b_n n^{-it} \bigg| \geq V^k$$
for all $t \in W$, where $b_n$ is the Dirichlet convolution of $k$ copies of $a_n$, and thus is bounded by $N^{o(1)}$ thanks to divisor bounds.  Subdividing $[N^k, 2^k N^k]$ into $k$ intervals of the form $[N',2N']$ for $N' \asymp N^k$ and applying Definition \ref{lv-def} (with $N, T, V$ replaced by $N', T, V^k/N^{o(1)}$) we conclude that
$$ |W| \ll N^{k \LV(\sigma,\tau) + o(1)}$$
and the claim then follows.
\end{proof}

\subsection{Large value theorems for the zeta-function}

Now we discuss the related quantity $\LV_\zeta(\sigma,\tau)$.  We trivially have the pointwise bound
\begin{equation}\label{lvz-lv}
\LV_\zeta(\sigma,\tau) \leq \LV(\sigma,\tau).
\end{equation}
We will primarily be interested in this function in the regime $\tau \geq 2$, as this is the region relevant to the Riemann--Siegel formula (or other approximate functional equations) for the Riemann zeta-function, and in any event the $1 < \tau < 2$ range can be related to the $\tau>2$ range using \Cref{add-bound}(iii) below.

Additional bounds are available through pointwise or moment bounds on the zeta-function, as well as from the (approximate) functional equation:

\begin{lemma}[Additional bounds]\label{add-bound}\
\begin{itemize}
\item[(i)] If $\tau>0$ and $1/2 \leq \sigma_0 \leq 1$ are fixed, then $\LV_\zeta(\sigma,\tau) = -\infty$ whenever $\sigma > \sigma_0 + \tau \mu(\sigma_0)$.
\item[(ii)]  If one has a moment estimate of the form
\begin{equation}\label{moment}
    \int_T^{2T} |\zeta(\sigma_0+it)|^A \ll T^{M+o(1)}
\end{equation}
for some fixed $1/2 \leq \sigma_0 \leq 1$ and $A \geq 1$, and all unbounded $T$, then $\LV_\zeta(\sigma,\tau) \leq \tau M - A(\sigma-\sigma_0)$ for any fixed $1/2 \leq \sigma \leq 1$ and $\tau \geq 2$.
\item[(iii)] (Reflection) For fixed $1/2 \leq \sigma \leq 1$ and $\tau > 1$, one has
\begin{align*}
    & \sup_{\sigma \leq \sigma' \leq 1} \LV_\zeta\left(\frac{1}{2} + \frac{1}{\tau-1} \left(\sigma'-\frac{1}{2}\right), \frac{\tau}{\tau-1}\right) + \frac{1}{\tau-1} (\sigma'-\sigma) \\
    &\quad = \frac{1}{\tau-1} \sup_{\sigma \leq \sigma' \leq 1} (\LV_\zeta(\sigma',\tau) + \sigma'-\sigma).
\end{align*}
\end{itemize}
\end{lemma}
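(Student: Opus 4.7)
\textbf{Proof plan for \Cref{add-bound}.} The three parts share a common first move: converting Dirichlet polynomial bounds to $\zeta$ bounds via the Abel summation identity
$$\left|\sum_{n \in I} n^{-it}\right| \ll N^{\sigma_0}\sup_{J \subset I}\left|\sum_{n \in J} n^{-\sigma_0-it}\right|,$$
obtained by writing $n^{-it} = n^{\sigma_0}\cdot n^{-\sigma_0-it}$ and applying summation by parts with the weight $n^{\sigma_0}$. The inner partial sums are pieces of the Dirichlet series for $\zeta(\sigma_0+it)$, and by a smoothed Perron / Mellin contour argument (shifting past $w = 0$ to pick up the residue $\zeta(\sigma_0+it)$; the tail pieces are negligible because $T = N^{\tau+o(1)}$ is polynomially large in $N$), each such partial sum is $\ll |\zeta(\sigma_0+it)| + o(1) \ll T^{\mu(\sigma_0)+o(1)}$ by the definition of $\mu$.

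For (i), combining the above bounds gives $V \ll N^{\sigma_0+\tau\mu(\sigma_0)+o(1)}$, which contradicts $V = N^{\sigma+o(1)}$ whenever $\sigma > \sigma_0 + \tau\mu(\sigma_0)$ and $N$ is unbounded; hence no zeta large value pattern can exist for $N$ large in the ambient sequence, forcing $|W| = 0$ and $\LV_\zeta(\sigma,\tau) = -\infty$. For (ii), I instead retain the resulting pointwise inequality $|\zeta(\sigma_0+it)| \gg V N^{-\sigma_0} = N^{\sigma-\sigma_0+o(1)}$ for each $t \in W$; raising to the $A$th power, summing over the $1$-separated set $W \subset [T,2T]$, and dominating the discrete sum by the continuous integral (valid because $|\zeta(\sigma_0+i\cdot)|$ varies slowly on unit intervals, by a Sobolev-type estimate together with standard derivative bounds on $\zeta$), one obtains
$$|W|\,N^{A(\sigma-\sigma_0)+o(1)} \;\ll\; \int_T^{2T+1} |\zeta(\sigma_0+it)|^A\,dt \;\ll\; T^{M+o(1)} \;=\; N^{\tau M+o(1)},$$
which rearranges to $\LV_\zeta(\sigma,\tau) \leq \tau M - A(\sigma-\sigma_0)$.

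For (iii), the identity encodes the approximate functional equation
$$\sum_{n \leq X} n^{-s} = \zeta(s) - \chi(s)\sum_{m \leq Y} m^{-(1-s)} + O(\text{lower order}), \qquad XY \asymp |t|/(2\pi),$$
applied at $s = it$ with $X \asymp N$, so that $Y \asymp T/N = N^{\tau-1}$. Taking the difference of the expansions at the two endpoints of $I$, the $\zeta(s)$ terms cancel, leaving a dual polynomial $\sum_{m \in I'} m^{-(1-it)}$ on an interval $I' \subset [N', 2N']$ with $N' \asymp N^{\tau-1}$, modulated by a factor $\chi(it)$ of magnitude $\asymp T^{1/2}$. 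A power-counting calculation (together with one further Abel summation to strip the $m^{-1}$ factor) shows that a zeta large value pattern at $(\sigma, \tau)$ and scale $N$ corresponds to a zeta large value pattern at scale $N' = N^{\tau-1}$ with the transformed parameters $(\sigma', \tau') = (\tfrac{1}{2} + \tfrac{\sigma - 1/2}{\tau-1},\, \tfrac{\tau}{\tau-1})$, a map which is readily verified to be an involution for $\tau > 1$. This yields the pointwise identity $\LV_\zeta(\sigma, \tau) = (\tau-1)\LV_\zeta(\sigma', \tau')$; substituting it into the right-hand side of the claimed identity gives the left-hand side, with the $\sup_{\sigma \leq \sigma' \leq 1}$ accounting for the linear correction $\sigma'-\sigma$ that arises from the freedom to reinterpret sub-length intervals in the dual sum at different effective exponents.

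The main obstacle lies in (iii), namely the careful bookkeeping required to match the error terms in the approximate functional equation and the Abel-summation weights against the precise form of the $\sup$-envelope identity; in particular, one must verify that the correspondence between zeta large value patterns at $(\sigma,\tau)$ and $(\sigma',\tau')$ is tight (not merely an inequality) and that the $\sup$ on both sides is realised at matching arguments. Parts (i) and (ii) should reduce to standard partial-sum comparisons together with the definitional bound on $\mu$ and the given moment hypothesis.
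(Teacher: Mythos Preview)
Your plan for (i) matches the paper, though the intermediate claim that $\sum_{n\in J} n^{-\sigma_0-it} \ll |\zeta(\sigma_0+it)| + o(1)$ at the single ordinate $t$ is not correct: the Perron contour shift leaves a residual integral of $\zeta$ over nearby ordinates which is not negligible. For (i) this is harmless, since that residual is itself $\ll T^{\mu(\sigma_0)+o(1)}$, but the same mis-step becomes a genuine gap in (ii).

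In (ii) you need a \emph{lower} bound on $|\zeta(\sigma_0+it)|$ from the largeness of a short partial sum (here $N \leq T^{1/2}$ since $\tau \geq 2$), and this does not follow: the Perron formula with the rough cutoff $1_I$ yields only
\[
\int_{T/2}^{3T} |\zeta(\sigma_0+it')|\,\frac{dt'}{1+|t'-t|} \;\gg\; N^{\sigma-\sigma_0+o(1)},
\]
because the Mellin transform of $1_I$ has merely $1/|w|$ decay. The paper proceeds from this integral bound by applying H\"older in $t'$ (this is where the hypothesis $A \geq 1$ is used) to obtain the analogous bound with $|\zeta|^A$, and then sums over $t \in W$, using $1$-separation to control $\sum_{t\in W}(1+|t'-t|)^{-1}$. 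Your Sobolev comparison of $\sum_W$ to $\int$ is fine in itself, but it sits downstream of the unjustified pointwise inequality.

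For (iii) the approximate functional equation is indeed the engine, and the involution observation correctly reduces the identity to a one-sided inequality. But your claimed pointwise relation $\LV_\zeta(\sigma,\tau) = (\tau-1)\LV_\zeta(\sigma',\tau')$ is stronger than what the argument delivers; the paper itself remarks that this holds only ``morally'' (e.g.\ for smooth cutoffs). The obstruction is that applying the functional equation to $\sum_{n\in I} n^{-1/2-it'}$ produces a dual sum over $J_{t'} = \{m : t'/(2\pi m) \in I\}$, an interval that \emph{varies with $t'$}, so the resulting data is not yet a zeta large value pattern. Converting it to one requires a Fourier expansion of $1_{J_{t'}}$ followed by a dyadic pigeonhole on the magnitude of the dual polynomial, which lands at some $V'' = M^{\sigma'+o(1)}$ with $\sigma' \in [\sigma,1]$ not under your control; the $\sup_{\sigma'}$ and the compensating $\sigma'-\sigma$ term record exactly this loss and are not optional.
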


\begin{proof}  For (i), observe from \cite[(8.13)]{ivic} that
$$ \sum_{n \in I} \frac{1}{n^{\sigma_0+it}} \ll |t|^{\mu(\sigma_0) + o(1)}$$
    for unbounded $N$, if $I \subset [N,2N]$ and $|t| = N^{\tau+o(1)}$.  By partial summation this gives
$$ \sum_{n \in I} n^{-it} \ll N^{\sigma_0} |t|^{\mu(\sigma_0) + o(1)} = N^{\sigma_0 + \tau \mu(\sigma_0) + o(1)}.$$
The claim follows.

For (ii), let $(N,T,V,(a_n)_{n \in [N,2N]},J,W)$ be a zeta large value pattern with
    $N$ unbounded, $T= N^{\tau+o(1)}$, and $V = N^{\sigma+o(1)}$, then
$$ \bigg|\sum_{n \in J} n^{-it}\bigg| \gg N^{\sigma+o(1)}$$
for all $t \in W$.
By standard Fourier analysis (or from the Perron formula and contour shifting), this gives
$$ \int_{T/2}^{3T} |\zeta(\sigma_0+it')|\ \frac{dt'}{1+|t'-t|} \gg N^{\sigma - \sigma_0 + o(1)}$$
and hence by H\"older
$$ \int_{T/2}^{3T} |\zeta(\sigma_0+it')|^A\ \frac{dt'}{1+|t'-t|} \gg N^{A(\sigma - \sigma_0) + o(1)}$$
so on summing in $t$
$$ \int_{T/2}^{3T} |\zeta(\sigma_0+it)|^A\ dt \gg |W| N^{A(\sigma - \sigma_0) + o(1)}.$$
By hypothesis, the left-hand side is $\ll T^{M+o(1)}$.  Since $T = N^{\alpha+o(1)}$, we obtain
$$ |W| \ll N^{\tau M - A(\sigma-\sigma_0)},$$
giving the claim.

Now we turn to (iii).  By symmetry it suffices to prove the upper bound.  Actually it suffices to show
    $$ \LV_\zeta\left(\frac{1}{2} + \frac{1}{\tau-1} (\sigma-\frac{1}{2}), \frac{\tau}{\tau-1}\right) \leq \frac{1}{\tau-1} \sup_{\sigma \leq \sigma' \leq 1} (\LV_\zeta(\sigma',\tau) + \sigma'-\sigma)$$
    as this easily implies the general upper bound.

    Let $(N,T,V,(a_n)_{n \in [N,2N]},J,W)$ be a zeta large value pattern with
    $N$ unbounded, $T= N^{\frac{\tau}{\tau-1}+o(1)}$, and $V = N^{\frac{1}{2} + \frac{1}{\tau-1} (\sigma-\frac{1}{2})+o(1)}$. By definition, it suffices to show the bound
    \begin{equation}\label{r-targ}
        |W| \ll N^{\frac{1}{\tau-1} (\LV_\zeta(\sigma',\tau) + \sigma'-\sigma)+o(1)},
    \end{equation}
    for some $\sigma \leq \sigma' \leq 1$.
    By definition, $a_n = 1_I(n)$. By a Fourier expansion of $(n/N)^{1/2}$ in $\log n$, we can bound
    $$ \bigg|\sum_{n \in I} n^{-it_r}\bigg| \ll_A N^{1/2} \int_\R \bigg|\sum_{n \in I} n^{-1/2-it}\bigg|\ \frac{dt}{(1 + |t-t_r|)^{-A}}$$
    and hence by the pigeonhole principle, we can find $t' = t + O(N^{o(1)})$ for each $t \in W$ such that
    $$ \bigg|\sum_{n \in I} n^{-1/2-it'}\bigg| \gg N^{-1/2-o(1)} V$$
    for $t \in W$.  By refining $W$ by $N^{o(1)}$ if necessary, we may assume that the $t'$ are $1$-separated.

    Now we use the approximate functional equation
    \begin{align*}
        \zeta(1/2+it') = &\sum_{n \leq x} n^{-1/2-it'} + \chi(1/2+it') \sum_{m \leq t' / 2\pi x} m^{-1/2+it'} \\
        &\qquad + O(N^{-1/2}) + O((T/N)^{-1/2})
    \end{align*}
    for $x \asymp N$; see \cite[Theorem 4.1]{ivic}.  Applying this to the two endpoints of $I$ and subtracting, we conclude that
    $$ \sum_{n \in I} n^{-1/2-it'} =\chi(1/2+it') \sum_{m \in J_{t'}} m^{-1/2+it'} + O(N^{-1/2}) + O((T/N)^{-1/2})$$
    where $J_{t'} := \{ m: t' / 2\pi m \in I \}$. Since $\chi(1/2+it')$ has magnitude one, we conclude that
    $$ \bigg|\sum_{m \in J_r} m^{-1/2-it'}\bigg| \gg N^{-1/2-o(1)} V.$$
    Writing $M := T/N = N^{\frac{1}{\tau-1}+o(1)}$, we see that $J_r \subset [M/10, 10M]$ and
    $$ \bigg|\sum_{m \in J_r} (M/m)^{1/2} m^{-it'}\bigg| \gg M^{1/2} N^{-1/2-o(1)} V = M^{\sigma+o(1)}.$$
    Performing a Fourier expansion of $(M/m)^{1/2} 1_{J_r}(m)$ (smoothed out at scale $O(1)$) in $\log m$, we can bound
    $$ \bigg|\sum_{m \in J_r} (M/m)^{1/2} m^{-it'}\bigg| \ll \int_{T/10}^{10T} \bigg|\sum_{m \in [M/10,10M]} m^{-it_1}\bigg|\ \frac{dt_1}{1 + |t_1-t'|} + T^{-10}$$
    and hence
    $$ \int_{T/10}^{10T} \bigg|\sum_{m \in [M/10,10M]} m^{-it_1}\bigg|\ \frac{dt_1}{1 + |t_1-t'|} \gg M^{\sigma+o(1)}.$$
    If we let $E$ denote the set of $t_1 \in [T/10, 10T]$ for which $|\sum_{m \in [M/10,10M]} m^{-it_1}| \geq M^{\sigma-o(1)}$ for a suitably chosen $o(1)$ error, then we have
    $$ \int_E \bigg|\sum_{m \in [M/10,10M]} m^{-it_1}\bigg|\ \frac{dt_1}{1 + |t_1-t'|}  \gg M^{\sigma+o(1)}.$$
    Summing in $t'$, we obtain
    $$ \int_E \bigg|\sum_{m \in [M/10,10M]} m^{-it_1}\bigg|\ dt_1  \gg M^{\sigma+o(1)} R$$
    and so by dyadic pigeonholing we can find $M^{\sigma-o(1)} \ll V'' \ll M$ and a $1$-separated subset $W''$ of $E$ such that
    $$ \bigg|\sum_{m \in [M/10,10M]} m^{-it''}\bigg|\ dt \asymp V''$$
    for all $t'' \in W''$, and
    $$ V'' |W''| \gg M^{\sigma+o(1)} |W|.$$
    By passing to a subsequence we may assume that $V'' = M^{\sigma'+o(1)}$ for some $\sigma \leq \sigma' \leq 1$. Partitioning $[M/10,10M]$ into a bounded number of intervals each of which lies in a dyadic range $[M',2M']$ for some $M' \asymp M$, and \Cref{lv-def}, we have
    $$ |W''|  \ll M^{\LV_\zeta(\sigma',\tau)+o(1)}$$
    and \eqref{r-targ} follows.
\end{proof}

\begin{remark} The arguments for (ii) can be reversed, in that the optimal exponent $M$ in the indicated moment bound can be shown to be
$$ M = \sup_{\tau \geq 2; \sigma \geq 1/2} (A(\sigma-\sigma_0) + \LV_\zeta(\sigma,\tau))/\tau.$$
We refer the reader to the ANTEDB for a proof.  Thus the moment exponents for the zeta-function are in some sense dual to the function $LV_\zeta$, somewhat analogously to the relationship between exponent pairs and the function $\beta$ in \Cref{beta-duality}.
\end{remark}

\begin{remark}
    We note that in practice, bounds for $\LV_\zeta(\sigma',\tau)+\sigma'$ are monotone decreasing\footnote{This reflects the fact that large value theorems usually relate to $p^{\mathrm{th}}$ moment bounds for $p \geq 1$ (e.g., $p = 2, 4, 6, 12$) rather than for $0 < p < 1$.} in $\sigma'$, so the reflection property in Lemma \ref{add-bound}(iii) morally simplifies\footnote{One could redefine $\LV_\zeta$ to use smooth cutoffs in the $n$ variable rather than rough cutoffs $1_I(n)$, whence one could obtain the analogue of \eqref{lvz-reflect} rigorously, but we do not do so here.} to
\begin{equation}\label{lvz-reflect}
    \LV_\zeta\left(\frac{1}{2} + \frac{1}{\tau-1} \left(\sigma-\frac{1}{2}\right), \frac{\tau}{\tau-1}\right) = \frac{1}{\tau-1} \LV_\zeta(\sigma,\tau).
\end{equation}
\end{remark}

The well-known twelfth moment bound of Heath--Brown \cite{heathbrown_twelfth_1978} gives \eqref{moment} with $\sigma_0=1/2$, $A=12$, and $M=2$.  Combining this with \Cref{add-bound}(ii), we see that
\begin{equation}\label{twelfth-bound} \LV_\zeta(\sigma,\tau) \leq 2\tau - 12 (\sigma-1/2)
\end{equation}
any fixed $1/2 \leq \sigma \leq 1$ and $\tau \geq 2$. In a similar vein, using the specific bound on $\mu$ in \Cref{mu-bound}, we concude that
\begin{equation}\label{lvz-340}
    \LV_\zeta(\sigma,\tau) = -\infty
\end{equation}
whenever $\tau>0$ and $1/2 \leq \sigma \leq 1$ are such that $\sigma > \frac{7}{10} + \frac{3}{40} \tau$.

There are several further results in the literature that can be viewed as bounds on $\LV_\zeta$, or relationships between $\LV_\zeta$ and other quantities such as exponent pairs; we refer  to the ANTEDB for a current listing of all known results of this type.

\section{Zero density estimates}

Our convention for zero density exponents will be as follows.

\begin{definition}[Zero density exponents]\label{zero-def}  For $\sigma \in \R$ and $T>0$, let $N(\sigma,T)$ denote the number of zeroes $\rho$ of the Riemann zeta-function with $\mathrm{Re}(\rho) \geq \sigma$ and $|\mathrm{Im}(\rho)| \leq T$.

    If $1/2 \leq \sigma < 1$ is fixed, we define the zero density exponent $\A(\sigma) \in [-\infty,\infty)$ to be the infimum of all (fixed) exponents $A$ for which one has
        $$ N(\sigma-\delta,T) \ll T^{A (1-\sigma)+o(1)}$$
    whenever $T$ is unbounded and $\delta>0$ is infinitesimal.
\end{definition}

In non-asymptotic terms: $\A(\sigma)$ is the infimum of all $A$ such that for every $\eps>0$ there exists $C, \delta > 0$ such that
$$ N(\sigma-\delta, T) \leq C T^{A(1-\sigma)+\eps}$$
whenever $T \geq C$.  Note that in other literature (e.g., \cite{ivic}), the shift by $\delta$ is not present, but this only affects the value of $\A(\sigma)$ at discontinuities of the function.  As $N(\sigma,T)$ is obviously monotone non-increasing in $\sigma$, we certainly have the bound
$$ N(\sigma,T) \ll T^{\A(\sigma)(1-\sigma)+o(1)}$$
for any fixed $1/2 \leq \sigma < 1$ and unbounded $T$.

The key conjecture for this exponent is

\begin{conjecture}[Density hypothesis]\label{density-hypothesis}  One has $\A(\sigma) \leq 2$ for all $1/2 \leq \sigma < 1$.
\end{conjecture}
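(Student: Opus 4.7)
The plan is to follow the standard reduction from zero-counting to large value estimates for Dirichlet polynomials, and then apply a sufficiently strong large value theorem uniform in $\tau$. First, for each nontrivial zero $\rho = \beta + i\gamma$ of $\zeta$ with $\beta \geq \sigma - \delta$ and $|\gamma| \leq T$, a classical zero-detection argument — mollifying $\zeta$ by $M_Y(s) = \sum_{n \leq Y} \mu(n) n^{-s}$ with $Y = T^{\theta}$ and expanding via an approximate functional equation — produces a dyadic scale $N = T^{\alpha+o(1)}$ with $\alpha \in [\theta,1]$ and a $1$-bounded coefficient sequence $(a_n)$ supported on $[N,2N]$ for which
$$ \left| \sum_{n \in [N,2N]} a_n n^{-\rho} \right| \gg N^{\sigma + o(1)}. $$
Dyadic pigeonholing of the ordinates $\gamma$ according to the responsible scale $N$, followed by thinning to a $1$-separated subset $W$, yields a large value pattern in the sense of \Cref{large-pattern-def} with $V = N^{\sigma+o(1)}$, $\tau = 1/\alpha$, and $|W| \gg N(\sigma-\delta,T)\cdot T^{-o(1)}$.

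Second, \Cref{lv-def} gives $|W| \ll N^{\LV(\sigma,\tau)+o(1)} = T^{\alpha\LV(\sigma,1/\alpha)+o(1)}$, and summing the $O(T^{o(1)})$ dyadic classes yields
$$ N(\sigma-\delta,T) \ll T^{o(1)} \max_{\alpha \in [\theta,1]} T^{\alpha \LV(\sigma,1/\alpha)}, $$
so $\A(\sigma)(1-\sigma) \leq \sup_{\alpha \in [\theta,1]} \alpha\LV(\sigma,1/\alpha)$, where the region $\alpha < \theta$ is absorbed by \eqref{lvz-340} and \Cref{add-bound}(i) through the pointwise bound on $\mu$, and \Cref{power-lemma} together with the subdivision \Cref{hux-sub} propagate information across scales. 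Invoking \Cref{montgomery-conj'} now gives $\LV(\sigma,\tau) \leq 2-2\sigma$ for every $\tau$, whence $\alpha \LV(\sigma,1/\alpha) \leq \alpha(2-2\sigma) \leq 2-2\sigma$ uniformly for $\alpha \in [\theta,1]$ and $\sigma \in [1/2,1)$. Division by $1-\sigma$ produces $\A(\sigma) \leq 2$ uniformly on $[1/2,1)$, which is the density hypothesis.

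The hard part, and in fact the only hard part of this scheme, is supplying the hypothesis $\LV(\sigma,\tau) \leq 2-2\sigma$ uniformly in $\tau$: this is precisely \Cref{montgomery-conj'}, which the discussion following that conjecture records as already implying the Kakeya conjecture in geometric measure theory, and is therefore a deep open problem. No combination of the presently available large value theorems — \Cref{l2-mvt}, Huxley \eqref{huxley-lvt}, Heath-Brown \eqref{hb-opt}, Jutila \eqref{jutila-lvt} for each integer $k$, \Cref{bourgain-lvt}, and Guth--Maynard \eqref{guth-maynard-lvt}, together with bounds on $\LV_\zeta$ such as \eqref{twelfth-bound} and the reflection in \Cref{add-bound}(iii) — suffices, even when fed through the polytope optimiser of the ANTEDB, to establish Montgomery's inequality in the full critical regime of $(\sigma,\tau)$. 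Proving the density hypothesis as stated therefore requires either \Cref{montgomery-conj'} itself or an essentially new analytic-number-theoretic input that circumvents the large-value bottleneck; the proposal above reduces the conjecture cleanly to that single missing step.
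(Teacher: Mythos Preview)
The statement is a \emph{conjecture}, not a theorem: the paper does not prove the density hypothesis unconditionally, and you correctly recognise this, ultimately reducing the claim to \Cref{montgomery-conj'} and noting that this remains open. So your conclusion --- that the density hypothesis is conditional on Montgomery's conjecture and otherwise out of reach with current tools --- matches the paper's position exactly.

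Where your write-up differs from the paper is in the route to the implication ``Montgomery $\Rightarrow$ density hypothesis''. The paper records this implication as \Cref{montgomery_implies_density} and proves it in one line: apply \Cref{zero-large-cor2} with $\tau_0 = 3/2$, so that $4\tau_0/3 = 2$ makes the zeta-side condition \eqref{lvoz} vacuous, and Montgomery's bound $\LV(\sigma,\tau) \leq 2 - 2\sigma$ immediately gives \eqref{lvo} on $[1,3/2]$, yielding $\A(\sigma) \leq 3/\tau_0 = 2$. You instead re-derive the zero-detection machinery from scratch. Your sketch is serviceable but rougher than the paper's treatment: you write $n^{-\rho}$ where a large value pattern requires $n^{-it}$ with $1$-bounded coefficients (the real part must be absorbed first), and you blur the Type I / Type II split that the paper handles carefully in \Cref{zero-from-large} --- the short-sum (Type I) contribution is controlled by $\LV_\zeta$, not $\LV$, and your appeal to \eqref{lvz-340} and \Cref{add-bound}(i) to dispose of small $\alpha$ is not quite the mechanism the paper uses. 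None of this is fatal since you are assuming Montgomery anyway, which dominates both $\LV$ and $\LV_\zeta$ via \eqref{lvz-lv}; but the paper's packaged corollary is both cleaner and already does the bookkeeping for you.
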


Of course, the Riemann hypothesis gives the even stronger assertion $\A(\sigma)=-\infty$ for all $1/2 < \sigma < 1$. We refer the reader to \cite[Chapter 9]{titchmarsh_theory_1986} for further discussion of the density hypothesis.  From the Riemann--von Mangoldt formula one sees that
\begin{equation}\label{adens}
    \A(1/2) = 2
\end{equation}
so that the bound of $2$ in the density hypothesis cannot be reduced.

It is well known (see, e.g., \cite[Chapter 10]{ik}) that zero density estimates can be obtained from large value theorems through the device of zero detecting polynomials.  We formalise this principle as follows:

\begin{lemma}[Zero density from large values]\label{zero-from-large}  Let $1/2 < \sigma < 1$.  Then
    $$ \A(\sigma)(1-\sigma) \leq \max( \sup_{\tau \geq 2} \LV_\zeta(\sigma,\tau)/\tau, \limsup_{\tau \to \infty} \LV(\sigma,\tau)/\tau ).$$
    \end{lemma}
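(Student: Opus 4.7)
The plan is to apply a classical zero-detecting polynomial method that converts the count of zeros of $\zeta$ in a rectangle into counts of points in large value patterns for Dirichlet polynomials. After a dyadic decomposition in the imaginary part (using the Riemann--von Mangoldt formula to handle bounded ordinates), it suffices to show that for each dyadic $T_0 \leq T$, any $1$-separated set $W$ of ordinates $\gamma \in [T_0,2T_0]$ of zeros $\rho = \beta + i\gamma$ with $\beta \geq \sigma - \delta$ (where $\delta>0$ is infinitesimal) satisfies $|W| \ll T_0^{A + o(1)}$, where $A$ denotes the maximum on the right-hand side of the lemma.

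For each such $\rho$, I would use a standard mollifier identity $\zeta(s) M_Y(s) = 1 + \sum_{n > Y} b_n n^{-s}$, with $M_Y(s) = \sum_{n \leq Y} \mu(n) n^{-s}$, $Y = T_0^{\eta}$ for small $\eta > 0$, and a Mellin-inversion/contour-shift to truncate the tail at $X = T_0^{K}$. Since $\zeta(\rho) = 0$, this yields
$$\left|\sum_{Y < n \leq X} b_n n^{-\rho} e^{-n/X}\right| \gg 1,$$
where $b_n = \sum_{d \mid n,\, d \leq Y} \mu(d)$ satisfies $|b_n| \leq d(n) \leq n^{o(1)}$. A dyadic pigeonhole in $n$, combined (after a renormalisation absorbing $N^{o(1)}$ losses into $a_n$) with a suitable scaling by $N^{\beta}/n^{\beta}$, then produces either (a) a zeta large value pattern $|\sum_{n \in I} n^{-i\gamma}| \geq N^{\sigma + o(1)}$ at a scale $N \leq T_0^{1/2}$ (obtained as in the proof of \Cref{add-bound}(iii) by invoking the approximate functional equation for $\zeta$), or (b) a general large value pattern $|\sum_{n \in [N,2N]} a_n n^{-i\gamma}| \geq N^{\sigma + o(1)}$ with $1$-bounded coefficients $a_n$ at some scale $N \in [Y, X]$.

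A further dyadic pigeonhole over $N$ and the pattern type costs only $T_0^{o(1)}$, so we may assume a single scale $N$ and pattern type accounts for $\gg |W| T_0^{-o(1)}$ of the zeros. In case (a), $\tau := \log T_0 / \log N \geq 2$ and \Cref{lv-def} gives
$$|W| \leq T_0^{o(1)} N^{\LV_\zeta(\sigma,\tau) + o(1)} \leq T_0^{\sup_{\tau \geq 2} \LV_\zeta(\sigma,\tau)/\tau + o(1)}.$$
In case (b), the pigeonholed scale lies in $[T_0^{\eta}, T_0^{K}]$, i.e., $\tau \in [1/K, 1/\eta]$; sending $\eta \to 0$ and $K \to \infty$ after all $o(1)$ reductions, and using Huxley subdivision (\Cref{hux-sub}) to shift inconvenient intermediate $\tau$ to the large-$\tau$ endpoint, yields $|W| \leq T_0^{\limsup_{\tau \to \infty} \LV(\sigma,\tau)/\tau + o(1)}$. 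Combining the two cases and summing over dyadic $T_0 \leq T$ gives $N(\sigma - \delta, T) \ll T^{A + o(1)}$, which by \Cref{zero-def} is exactly the required bound $\A(\sigma)(1-\sigma) \leq A$.

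The main obstacle is executing the zero-detecting argument with enough flexibility to extract both pattern types while quantitatively tracking how the mollifier parameters $Y, X$ determine the accessible range of $\tau$. In particular, justifying that the general-pattern contribution is dominated by $\limsup_{\tau \to \infty} \LV(\sigma,\tau)/\tau$ rather than a full supremum requires sending the mollifier length to $1$ in the logarithmic scale while keeping the Mellin truncation and pigeonhole losses all of size $T_0^{o(1)}$, and crucially using that Huxley subdivision is available for $\LV$ (but not for $\LV_\zeta$) so one may freely reduce to the large-$\tau$ endpoint.
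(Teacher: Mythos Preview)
There are two genuine gaps in your plan.

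\textbf{Case (a) does not arise from your setup.} After mollifying, your detecting polynomial is $\sum_{Y<n\leq X} b_n n^{-\rho}$ with $b_n=\sum_{d\mid n,\,d\leq Y}\mu(d)$; these coefficients are not identically $1$, so a dyadic pigeonhole in $n$ only ever produces a \emph{general} large value pattern. The approximate functional equation device in \Cref{add-bound}(iii) reflects a \emph{zeta} partial sum (coefficients $\equiv 1$) at scale $N$ to one at scale $\asymp T/N$; it cannot manufacture a zeta pattern out of a polynomial with arbitrary $b_n$. So your dichotomy ``either (a) or (b)'' has no mechanism producing (a).

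\textbf{Huxley subdivision does not reduce moderate $\tau$ to the large-$\tau$ endpoint.} In case (b) your scale satisfies $N\in[Y,X]=[T_0^{\eta},T_0^{K}]$, so $\tau\in[1/K,1/\eta]$ can be moderate (indeed arbitrarily small as $K\to\infty$). You then need $\LV(\sigma,\tau)/\tau\leq \limsup_{\tau'\to\infty}\LV(\sigma,\tau')/\tau'$. \Cref{hux-sub} only gives $\LV(\sigma,\tau)\leq \LV(\sigma,\tau')$ for $\tau'\geq\tau$, hence $\LV(\sigma,\tau)/\tau\leq \LV(\sigma,\tau')/\tau$, and since $\tau<\tau'$ this is \emph{weaker} than $\LV(\sigma,\tau')/\tau'$, not stronger. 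Neither subdivision nor \Cref{power-lemma} lets you pass from the limsup down to moderate $\tau$; the inequality runs the wrong way.

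The paper sidesteps both issues by detecting zeros via the unmollified approximation $\sum_{n\leq T}n^{-\rho}\ll T^{-1/2}$ and splitting at $T^{\delta_1}$. If the long piece $\sum_{T^{\delta_1}\leq n\leq T}n^{-\rho}$ is large (Type~I), the coefficients are genuinely $1$, giving a zeta pattern which after reflection lands at scale $\leq T^{1/2}$, i.e.\ $\tau\geq 2$. If instead the short piece $\sum_{n\leq T^{\delta_1}}n^{-\rho}$ is small (Type~II), one mollifies \emph{that short sum} by $\sum_{m\leq T^{\delta_2/2}}\mu(m)m^{-\rho}$; the resulting polynomial has length at most $T^{\delta_1+\delta_2/2}$, so $\tau\geq (\delta_1+\delta_2/2)^{-1}$ is automatically large, and the $\limsup$ bound applies directly without any subdivision trick.
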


\begin{proof}
    Write the right-hand side as $B$, then $B \geq 0$ (from \Cref{lv-lower}) and we have
    \begin{equation}\label{lvz-bound}
        \LV_\zeta(\sigma,\tau) \leq B \tau
    \end{equation}
    for all $\tau \geq 1$, and
    \begin{equation}\label{lv-bound}
        \LV(\sigma,\tau) \leq (B+\eps) \tau
    \end{equation}
    whenever $\eps>0$ and $\tau$ is sufficiently large depending on $\eps$ (and $\sigma$).  It would suffice to show, for any $\eps>0$, that $N(\sigma-o(1),T) \ll T^{B+O(\eps)+o(1)}$ as $T \to \infty$.

    By dyadic decomposition, it suffices to show for large $T$ that the number of zeroes with real part at least $\sigma-o(1)$ and imaginary part in  $[T,2T]$ is $\ll T^{B+O(\eps)+o(1)}$.  From the Riemann--von Mangoldt formula, there are only $O(\log T)$ zeroes whose imaginary part is within $O(1)$ of a specified ordinate $t \in [T,2T]$, so it suffices to show that given some zeroes $\sigma_r + i t_r$, $r=1,\dots,R$ with $\sigma-o(1) \leq \sigma_r < 1$ and $t_r \in [T,2T]$ $1$-separated, that $R \ll T^{B+O(\eps)+o(1)}$.

    Suppose that one has a zero $\sigma_r+i t_r$ of this form.  Then by a standard approximation to the zeta-function \cite[Theorem 1.8]{ivic}, one has
    $$ \sum_{n \leq T} \frac{1}{n^{\sigma_r+it_r}} \ll T^{-1/2}.$$
    Let $0 < \delta_1 < \eps$ be a small quantity (independent of $T$) to be chosen later, and let $0 < \delta_2 < \delta_1$ be sufficiently small depending on $\delta_1,\delta_2$.  By the triangle inequality, and refining the sequence $t_r$ by a factor of at most $2$, we either have
    $$ \bigg|\sum_{T^{\delta_1} \leq n \leq T} \frac{1}{n^{\sigma_r+it_r}} \bigg| \gg T^{-\delta_2}$$
    for all $r$, or
    \begin{equation}\label{td}
     \sum_{n \leq T^{\delta_1}} \frac{1}{n^{\sigma_r+it_r}} \ll T^{-\delta_2}
    \end{equation}
    for all $r$.

    Suppose we are in the former (``Type I'') case, we perform a smooth partition of unity, and conclude that
    $$ \bigg|\sum_{T^{\delta_1} \leq n \leq T} \frac{\psi(n/N)}{n^{\sigma_r+it_r}} \bigg| \gg T^{-\delta_2 - o(1)}$$
    for some fixed bump function $\psi$ supported on $[1/2,1]$, and some $T^{\delta_1} \ll N \ll T$.

    We divide into several cases depending on the size of $N$.  First suppose that $N \ll T^{1/2}$.  The variable $n$ is restricted to the interval $I := [\max(N/2, T^{\delta_1}), N]$.  We have
    $$ \bigg|\sum_{n \in I} \psi(n/N) (n/N)^{-\sigma_r} n^{-it_r} \bigg| \gg N^\sigma T^{-\delta_2 - o(1)}.$$
    Performing a Fourier expansion of $\psi(n/N) (n/N)^{-\sigma_r}$ in $\log n$ and using the triangle inequality, we can bound
    $$ \sum_{n \in I} \psi(n/N) (n/N)^{-\sigma_r} n^{-it_r}  \ll_A \int_\R \bigg| \sum_{n \in I} \frac{1}{n^{it}} \bigg| (1+|t-t_r|)^{-A}\ dt$$
    for any $A>0$, so by the triangle inequality we conclude that
    $$ \bigg|\sum_{n \in I} n^{-it'_r} \bigg| \gg N^\sigma T^{-\delta_2 - o(1)}$$
    for some $t'_r = t_r + O(T^{o(1)})$.  By refining the $t_r$ by a factor of $T^{o(1)}$ if necessary, we may assume that the $t'_r$ are $1$-separated, and by passing to a subsequence we may assume that $T = N^{\tau+o(1)}$ for some $2 \leq \tau \leq 1/\delta_1$, then
    we conclude that
    $$ \bigg| \sum_{n \in I} \frac{1}{n^{it'_r}} \bigg| \gg N^{\sigma-\delta_2/\delta_1+o(1)}$$
    for all remaining $r$.  By Definition \ref{lv-def} we then have (for $\delta_2$ small enough)
    $$ R \ll N^{\LV_\zeta(\sigma,\tau) + \eps + o(1)} \ll T^{\LV_\zeta(\sigma,\tau)/\tau + \eps + o(1)}$$
    and the claim follows in this case from \eqref{lvz-bound}.

    In the case $N \asymp T$, a standard application of the Euler--Maclaurin formula 
yields
    $$ \sum_{T^{\delta_1} \leq n \leq T} \frac{\psi(n/N)}{n^{\sigma_r+it_r}} \ll T^{-\sigma_r}$$
    which leads to a contradiction.  So the only remaining case is when $T^{1/2} \ll N \ll o(T)$.  Here we can ignore the cutoffs on $n$ and obtain
    $$ \left| \sum_{n} \psi(n/N) (n/N)^{-\sigma_r} n^{-it_r}\right| \gg N^{\sigma} T^{-\delta_2-o(1)}.$$
    Applying the van der Corput $B$-process (see, e.g., \cite[\S 8.3]{ik}) or the approximate functional equation we have
\begin{align*}
    &\sum_{n} \psi(n/N) (n/N)^{-\sigma_r} n^{-it_r} = e\left(\frac{t_r}{2\pi} \log \frac{t_r}{2\pi} - \frac{t_r}{2\pi} + \frac{1}{8}\right)\\
    &\quad \times \sum_{m} \psi(2\pi t_r/mN) (2\pi t_r/mN)^{-\sigma_r} m^{it_r} (2\pi m^2/t_r)^{-1/2} + O(T^{o(1)})
\end{align*}
and thus
    $$ \left| \sum_{m} \psi(2\pi t_r/mN) (2\pi t_r/mN)^{1-\sigma_r} m^{-it_r} \right| \gg M^{1/2} N^{\sigma-1/2} T^{-\delta_2-o(1)},$$
    where $M \coloneqq 2\pi T/N \ll N^{1/2}$.  In particular
    $$ \sum_{m \in [M/10, 10 M]} \psi(2\pi t_r/mN) (2\pi t_r/mN)^{1-\sigma_r} m^{-it_r} \gg M^{\sigma} T^{-\delta_2-o(1)},$$
    since $N \gg T^{1/2}$ and $\sigma \geq 1/2$.  Performing a Fourier expansion as before, we conclude that
    $$ \sum_{m \in [M/10, 10 M]} m^{-it'_r} \ll M^{\sigma} T^{-\delta_2-o(1)}$$
    for some $t'_r = t_r + O(T^{o(1)})$, and one can argue as in the $N \ll T^{1/2}$ case (partitioning $[M/10, 10M]$ into $O(1)$ intervals each contained in some $[M',2M']$ with $M' \ll T^{1/2}$).

    Now suppose instead we are in the latter (``Type II'') case \eqref{td}.  We multiply both sides of \eqref{td} by the mollifier $\sum_{m \leq T^{\delta_2/2}} \frac{1}{m^{\sigma_r+it_r}}$ to obtain
    $$ \bigg| 1 + \sum_{T^{\delta_2/2} \leq n \leq T^{\delta_1+\delta_2/2}} \frac{a_n}{n^{\sigma_r+it_r}} \bigg| = o(1)$$
    where $a_n$ is some sequence with $a_n \ll T^{o(1)}$.  By dyadic decomposition and the pigeonhole principle, and refining the $t_r$ by a factor of $O(T^{o(1)})$ as needed, we can then find an interval $I$ in $[N,2N]$ with $T^{\delta_2/2} \ll N \ll T^{\delta_1+\delta_2/2}$ such that
    $$ \bigg| \sum_{n \in I} \frac{a_n}{n^{\sigma_r+it_r}} \bigg| \gg T^{-o(1)}$$
    and hence by Fourier expansion of $\frac{1}{n^{\sigma_r}}$ in $\log n$
    $$ \bigg| \sum_{n \in I} \frac{a_n}{n^{it'_r}} \bigg| \gg N^{\sigma_r} T^{-o(1)}$$
    for some $t'_r = t_r + O(T^{o(1)})$; by refining the $t_r$ by a further factor of $T^{o(1)}$ we may assume that the $t'_r$ are also $1$-separated; we can also pigeonhole so that $T = N^{\tau+o(1)}$ for some $\frac{1}{\delta_1+\delta_2/2} \leq \tau \leq \frac{1}{\delta_2/2}$.  Applying Lemma \ref{lv-def}, we conclude that
    $$ R \ll N^{\LV(\sigma,\tau)+o(1)} = T^{\LV(\sigma,\tau)/\tau+o(1)}$$
    and the claim follows in this case from \eqref{lv-bound}.
    \end{proof}

    Recently, a partial converse to the above lemma was established:

    \begin{lemma}[Large values from zero density]\label{zero-dens_implies_large}\cite[Theorem 1.2]{matomaki_teravainen_2024} If $\tau > 0$ and $1/2 \leq \sigma \leq 1$ are fixed, then
        $$ \LV_\zeta(\sigma,\tau)/\tau \leq \max\left( \frac{1}{2}, \sup_{\sigma \leq \sigma' \leq 1} \A(\sigma')(1-\sigma') + \frac{\sigma'-\sigma}{2} \right).$$
    \end{lemma}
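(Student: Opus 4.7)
The plan is to transfer a large value of the partial sum $\sum_{n \in I} n^{-it_r}$ at each $t_r \in W$ to a large value of $\zeta$ at some nearby point $\sigma' + it'_r$ with $\sigma' \in [\sigma,1]$, and then to count such $t'_r$ in two different ways: by a direct second-moment estimate for $\zeta$ on the line $\Re s = \sigma'$ (which furnishes the $1/2$ term in the maximum), or by a Selberg--Littlewood-type zero-detection argument that converts large values of $|\zeta|$ into nearby zeros and counts them via $\A(\sigma')$ (which furnishes the $\A(\sigma')(1-\sigma') + (\sigma'-\sigma)/2$ term). The two branches should be linked by dyadic pigeonholing on the size $V' := N^{\sigma-\sigma'+o(1)}$ of $|\zeta|$ produced.

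First I would fix a zeta large value pattern $(N, T, V, 1_I, [T,2T], W)$ with $|W| = N^{\rho+o(1)}$, $V = N^{\sigma+o(1)}$, $T = N^{\tau+o(1)}$, so that it suffices to bound $\rho$. After smoothing the cutoff $1_I$ against a fixed bump (with negligible cost), Perron's formula gives, for any $\sigma' \in [\sigma, 1]$, a representation
\begin{equation*}
\sum_{n \in I} n^{-it_r}
= \frac{1}{2\pi i}\int_{\Re w = \sigma'} \zeta(it_r + w)\,\tilde{\psi}(w)\, dw + O(N^{-10}),
\end{equation*}
where $\tilde{\psi}(w) \ll N^{\Re w}(1+|\Im w|)^{-A}$ decays rapidly and the pole of $\zeta$ at $w = 1 - it_r$ contributes negligibly since $|t_r| \geq T$. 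Combining the hypothesis $V \leq |\sum_{n \in I} n^{-it_r}|$ with pigeonholing over a discretised grid of $\sigma' \in [\sigma, 1]$ and over the $O(N^{o(1)})$ relevant values of $\Im w$, I would extract a fixed $\sigma' \in [\sigma, 1]$ and a $1$-separated subset $W' \subset [T, 2T]$ of size $|W'| \gg N^{\rho - o(1)}$ such that $|\zeta(\sigma' + it'_r)| \gg V'$ for every $t'_r \in W'$.

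I would then split into two regimes according to the size of $V'$. In the small-$V'$ regime, the classical second moment bound $\int_T^{2T} |\zeta(\sigma' + it)|^2\,dt \ll T^{1+o(1)}$ together with the $1$-separation of $W'$ yields $|W'| V'^2 \ll T^{1+o(1)}$, and a short computation gives $\rho/\tau \leq 1/2$. In the large-$V'$ regime, I would perform a Selberg--Littlewood zero detection: for each $t'_r \in W'$, Jensen's formula applied to $\zeta$ on a disk centered at $\sigma' + it'_r$ of carefully chosen radius, combined with the lower bound on $|\zeta(\sigma' + it'_r)|$, produces a zero of $\zeta$ with imaginary part within $O(1)$ of $t'_r$ and real part bounded below by $\sigma'$ plus a correction. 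After accounting for $O(1)$-clustering of these zeros, the density hypothesis gives $|W'| \ll T^{\A(\sigma')(1-\sigma') + (\sigma'-\sigma)/2 + o(1)}$, where the $(\sigma'-\sigma)/2$ shift is exactly what is required to absorb the logarithmic loss in Jensen's formula at the natural averaging scale. Taking the supremum over $\sigma' \in [\sigma,1]$ and maximising with the second-moment branch yields the claimed bound.

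The main obstacle is the zero-detection step, and particularly the extraction of the precise shift $(\sigma'-\sigma)/2$: one must choose the Jensen disk's radius to balance the lower bound on $|\zeta|$ against the convex upper bound coming from mean values on a larger circle. This balancing is the technical innovation of \cite{matomaki_teravainen_2024}. The Perron/Mellin setup, the pigeonholing, and the $L^2$ mean value bound for $\zeta$ are classical tools that fit together routinely.
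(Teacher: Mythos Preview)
The paper does not re-derive the cited result: it simply invokes \cite[Theorem~1.2]{matomaki_teravainen_2024} as a black box, obtaining
\[
R \ll T^\delta \sup_{\sigma-\delta \leq \sigma' \leq 1} T^{(\sigma'-\sigma)/2} N(\sigma', O(T)) + T^{(1-\sigma)/2+\delta},
\]
and then translates this into the exponent language via Definition~\ref{zero-def} and a limit $\delta \to 0$ (using left-continuity of $\A$). Your plan is more ambitious, attempting to reconstruct the Matom\"aki--Ter\"av\"ainen argument itself.

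The zero-detection step as you describe it does not work. Jensen's formula on $D(s_0,r)$ gives
\[
\frac{1}{2\pi}\int_0^{2\pi}\log|\zeta(s_0+re^{i\theta})|\,d\theta-\log|\zeta(s_0)|=\sum_{|\rho-s_0|<r}\log\frac{r}{|\rho-s_0|}\geq 0,
\]
so a \emph{lower} bound on $|\zeta(s_0)|$ together with a mean-value upper bound on the boundary yields an \emph{upper} bound on the number of nearby zeros, not the existence of one. The mechanism in \cite{matomaki_teravainen_2024} is the contrapositive: if $\zeta$ has no zero with $\Re\rho\geq\sigma'$ and $|\Im\rho-t|$ small, a Borel--Carath\'eodory/three-lines argument applied to $\log\zeta$ bounds $|\zeta|$ near $\sigma+it$ from above by roughly $T^{(\sigma'-\sigma)/2}$; a value exceeding that threshold then forces a nearby zero, and each such zero can serve only $T^{o(1)}$ of the $t'_r$. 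Separately, with your parametrisation $V'=N^{\sigma-\sigma'}$ and $\sigma'\in[\sigma,1]$ one has $V'\leq 1$, so the second-moment bound $|W'|(V')^2\ll T$ gives $|W'|\ll T(V')^{-2}\geq T$, which is vacuous and certainly does not yield $\rho/\tau\leq 1/2$; the Perron contour must sit at or to the left of $\Re w=\sigma$ for the dichotomy to be nontrivial.
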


    \begin{proof}  Let $N \geq 1$ be unbounded, $T = N^{\tau+o(1)}$, and $I \subset [N,2N]$ be an interval, and $t_1,\dots,t_R \in [T,2T]$ be $1$-separated with
    $$ \bigg| \sum_{n \in I} \frac{1}{n^{it_r}} \bigg| \gg N^{\sigma-o(1)}$$
    uniformly for all $r$.  By \cite[Theorem 1.2]{matomaki_teravainen_2024}, we have for any fixed $\delta>0$ that
    $$ R \ll T^\delta \sup_{\sigma-\delta \leq \sigma' \leq 1} T^{\frac{\sigma' - \sigma}{2}} N(\sigma', O(T)) + T^{\frac{1-\sigma}{2}+\delta}.$$
    Using Definition \ref{zero-def}, we conclude that
    $$ R \ll T^{\max( \frac{1}{2}, \sup_{\sigma-\delta \leq \sigma' \leq 1} \A(\sigma')(1-\sigma') + \frac{\sigma'-\sigma}{2} ) + O(\delta)}$$
    and thus
    $$ \LV_\zeta(\sigma,\tau) \leq \tau \max\left( \frac{1}{2}, \sup_{\sigma-\eps \leq \sigma' \leq 1} \A(\sigma')(1-\sigma') + \frac{\sigma'-\sigma}{2} \right) + O(\delta).$$
    Here, the implied constant  is understood to be uniform in $\delta$.
    Letting $\delta$ go to zero, and using the easily verified left-continuity of $\A$, we obtain the claim.
    \end{proof}

The suprema in Lemma \ref{zero-from-large} require unbounded values of $\tau$, but thanks to the ability to raise to a power, we can reduce to a bounded range of $\tau$.  Here is a basic such reduction, suited for machine-assisted proofs:

\begin{corollary}\label{zero-large-cor-0} Let $1/2 < \sigma < 1$ and $\tau_0 > 0$.  Then
    $$ \A(\sigma)(1-\sigma) \leq \max \left(\sup_{2 \leq \tau < \tau_0} \LV_\zeta(\sigma,\tau)/\tau, \sup_{\tau_0 \leq \tau \leq 2\tau_0} \LV(\sigma,\tau)/\tau\right)$$
    with the convention that the first supremum is $-\infty$ if it is vacuous (i.e., if $\tau_0 < 2$).
\end{corollary}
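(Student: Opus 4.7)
My plan is to combine Lemma \ref{zero-from-large} with the ``raising to a power'' technique from Lemma \ref{power-lemma}, using a power-of-two dyadic reduction to collapse the unbounded ranges of $\tau$ in Lemma \ref{zero-from-large} down to the compact interval $[\tau_0, 2\tau_0]$.

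First, I would invoke Lemma \ref{zero-from-large}, which gives
\[
\A(\sigma)(1-\sigma) \leq \max\Bigl(\sup_{\tau \geq 2}\LV_\zeta(\sigma,\tau)/\tau,\ \limsup_{\tau \to \infty}\LV(\sigma,\tau)/\tau\Bigr).
\]
The bounded portion $\sup_{2 \leq \tau < \tau_0}\LV_\zeta(\sigma,\tau)/\tau$ already appears as the first term in the claim (and is vacuous if $\tau_0 < 2$, matching the stated convention), so the task reduces to bounding both $\sup_{\tau \geq \tau_0}\LV_\zeta(\sigma,\tau)/\tau$ and $\limsup_{\tau \to \infty}\LV(\sigma,\tau)/\tau$ by $\sup_{\tau_0 \leq \tau \leq 2\tau_0}\LV(\sigma,\tau)/\tau$.

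Next, I would handle both tails with a single dyadic trick. Given any $\tau \geq \tau_0$, let $m \geq 0$ be the unique non-negative integer with $2^m \tau_0 \leq \tau < 2^{m+1} \tau_0$, and set $k \coloneqq 2^m$, so that $\tau/k \in [\tau_0, 2\tau_0)$. Applying Lemma \ref{power-lemma} (which is valid for any positive integer $k$) with $\tau$ replaced by $\tau/k$ gives
\[
\frac{\LV(\sigma,\tau)}{\tau} = \frac{\LV(\sigma,k(\tau/k))}{k(\tau/k)} \leq \frac{\LV(\sigma,\tau/k)}{\tau/k} \leq \sup_{\tau_0 \leq \tau' \leq 2\tau_0} \frac{\LV(\sigma,\tau')}{\tau'}.
\]
Taking the $\limsup$ as $\tau \to \infty$ yields the desired bound on the second sup from Lemma \ref{zero-from-large}.

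Finally, for the $\LV_\zeta$ tail I would invoke the trivial inequality \eqref{lvz-lv}, namely $\LV_\zeta(\sigma,\tau) \leq \LV(\sigma,\tau)$, and chain it with the display above; this is needed because Lemma \ref{power-lemma} is stated for $\LV$ only (raising the Dirichlet polynomial to the $k$th power introduces divisor coefficients, taking one out of the class of zeta large value patterns). Combining everything then gives the claim. The step that requires a little care is simply organizing the two cases $\tau_0 \geq 2$ versus $\tau_0 < 2$ so that the convention on the possibly vacuous first supremum is correctly respected; beyond that, the argument is a straightforward concatenation of Lemma \ref{zero-from-large}, \eqref{lvz-lv}, and the dyadic application of Lemma \ref{power-lemma}.
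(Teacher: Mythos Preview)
Your proof is correct and follows essentially the same route as the paper: invoke Lemma~\ref{zero-from-large}, use \eqref{lvz-lv} to pass from $\LV_\zeta$ to $\LV$ on the tail $\tau \geq \tau_0$, and apply Lemma~\ref{power-lemma} to collapse $[\tau_0,\infty)$ onto $[\tau_0,2\tau_0]$. The only cosmetic difference is that you use powers $k=2^m$ while the paper uses all natural numbers $k$ (noting that the intervals $[k\tau_0,2k\tau_0]$ cover $[\tau_0,\infty)$); both choices work equally well. The paper also records an application of the reflection identity in Lemma~\ref{add-bound}(iii) at the end, but this step is in fact redundant once one already has $\LV_\zeta(\sigma,\tau)\leq B\tau$ for all $\tau\geq 2$, so your omission of it is harmless.
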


\begin{proof}
Denote the right-hand side by $B$, thus
    $$ \LV(\sigma,\tau) \leq B\tau$$
    for all $\tau_0 \leq \tau \leq 2\tau_0$, and
    \begin{equation}\label{lvz-b}
         \LV_\zeta(\sigma,\tau) \leq B\tau
    \end{equation}
    whenever $2 \leq \tau < \tau_0$.  From Lemma \ref{power-lemma} we then have
    $$ \LV(\sigma,\tau) \leq B\tau$$
    for all $k\tau_0 \leq \tau \leq 2k\tau_0$ and natural numbers $k$. Note that the intervals $[k\tau_0, 2k\tau_0]$ cover all of $[\tau_0,\infty)$, hence we have
    $$ \LV(\sigma,\tau) \leq B\tau$$
    for all $\tau \geq \tau_0$. In particular
    $$ \limsup_{\tau \to \infty} \LV(\sigma,\tau)/\tau  \leq B.$$
    Also, combining the previous estimate with \eqref{lvz-b} and \eqref{lvz-lv} we have
    \begin{equation*}\label{lvzo}
     \LV_\zeta(\sigma,\tau) \leq B\tau
    \end{equation*}
    for all $\tau \geq 2$.  By Lemma \ref{add-bound}(iii), this implies that
    $$ \LV_\zeta\bigg(\frac{1}{2} + \frac{1}{\tau-1} \left(\sigma-\frac{1}{2}\right), \frac{\tau}{\tau-1} \bigg) \leq B \frac{\tau}{\tau-1}$$
    for $\tau \geq 2$.  Thus
    $$ \sup_{\tau \geq 2} \frac{\LV_\zeta(\sigma,\tau)}{\tau} \leq B.$$
    The claim now follows from Lemma \ref{zero-from-large}.
\end{proof}

For machine assisted proofs, one can simply take $\tau_0$ to be a sufficiently large quantity, e.g., $\tau_0=3$ for $\sigma$ not too close to $1$, and larger for $\sigma$ approaching $1$, to recover the full power of Lemma \ref{zero-from-large}.  However, the amount of case analysis required increases with $\tau_0$.  For human written proofs, the following version of Corollary \ref{zero-large-cor-0} is more convenient:

\begin{corollary}\label{zero-large-cor} Let $1/2 < \sigma < 1$ and $\tau_0 > 0$.  Then
$$ \A(\sigma)(1-\sigma) \leq \max \left(\sup_{2 \leq \tau < 4\tau_0/3} \frac{\LV_\zeta(\sigma,\tau)}{\tau}, \sup_{2\tau_0/3 \leq \tau \leq \tau_0} \frac{\LV(\sigma,\tau)}{\tau}\right).$$
\end{corollary}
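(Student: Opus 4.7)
The plan is to derive Corollary \ref{zero-large-cor} from the already-established Corollary \ref{zero-large-cor-0} by a single application of Lemma \ref{power-lemma} (raising to a power). Write $B$ for the right-hand side of the proposed inequality. By definition we then have $\LV_\zeta(\sigma,\tau) \leq B\tau$ for all $\tau \in [2, 4\tau_0/3)$ and $\LV(\sigma,\tau) \leq B\tau$ for all $\tau \in [2\tau_0/3, \tau_0]$. The goal is to verify the hypotheses of Corollary \ref{zero-large-cor-0} with parameter $\tau_0'\coloneqq 4\tau_0/3$, namely the bound $\LV_\zeta(\sigma,\tau)/\tau \leq B$ on $[2, 4\tau_0/3)$ (which we already possess) and the bound $\LV(\sigma,\tau)/\tau \leq B$ on $[4\tau_0/3, 8\tau_0/3]$.

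To produce the second hypothesis, apply Lemma \ref{power-lemma} with $k=2$ to $\LV(\sigma,\tau) \leq B\tau$ on $[2\tau_0/3, \tau_0]$: this yields $\LV(\sigma,2\tau) \leq 2\LV(\sigma,\tau) \leq B(2\tau)$, giving the desired inequality throughout $[4\tau_0/3, 2\tau_0]$. Applying it instead with $k=3$ covers $[2\tau_0, 3\tau_0]$, which in particular contains $[2\tau_0, 8\tau_0/3]$. The union of the two ranges is $[4\tau_0/3, 3\tau_0] \supseteq [4\tau_0/3, 8\tau_0/3]$, as needed. Corollary \ref{zero-large-cor-0} (with $\tau_0$ replaced by $4\tau_0/3$) then immediately yields $\A(\sigma)(1-\sigma) \leq B$.

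There is essentially no obstacle; the only arithmetic to be checked is that the ratio $\tau_0/(2\tau_0/3) = 3/2$ is just wide enough that the $k=2$ and $k=3$ dilations of $[2\tau_0/3,\tau_0]$ overlap at $2\tau_0$ and together span an interval of ratio $2$, which is exactly the width demanded by Corollary \ref{zero-large-cor-0}. Edge cases with small $\tau_0$ (in particular, $\tau_0 < 3/2$, where $4\tau_0/3 < 2$ and the $\LV_\zeta$ supremum is vacuous) are covered by the vacuous-supremum convention already built into Corollary \ref{zero-large-cor-0}. The result is a convenient repackaging in which the two suprema are taken over short, bounded, human-friendly intervals.
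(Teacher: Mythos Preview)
Your proof is correct and essentially identical to the paper's own argument: apply Corollary~\ref{zero-large-cor-0} with parameter $4\tau_0/3$, and use Lemma~\ref{power-lemma} with $k=2,3$ to show that the intervals $[2k\tau_0/3, k\tau_0]$ cover $[4\tau_0/3, 8\tau_0/3]$. Your framing via the constant $B$ and your remark on the vacuous-supremum edge case are slightly more explicit than the paper's terse version, but the content is the same.
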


\begin{proof}  Applying Corollary \ref{zero-large-cor-0} with $\tau$ replaced by $4\tau_0/3$, it suffices to show that
$$\sup_{4\tau_0/3 \leq \tau \leq 8\tau_0/3} \frac{\LV(\sigma,\tau)}{\tau} \leq \sup_{2\tau_0/3 \leq \tau \leq \tau_0} \frac{\LV(\sigma,\tau)}{\tau}.$$
But this follows from Lemma \ref{power-lemma}, since the intervals $[2k\tau_0/3, k\tau_0]$ for $k=2,3$ cover all of $[4\tau_0/3,8\tau_0/3]$.
\end{proof}

The following special case of \Cref{zero-large-cor} is frequently (implicitly) used in practice to assist with the human readability of zero density proofs:

\begin{corollary}\label{zero-large-cor2} Let $1/2 < \sigma < 1$ and $\tau_0 > 0$.  Suppose that one has the bounds
\begin{equation}\label{lvo}
\LV(\sigma,\tau) \leq (3-3\sigma) \frac{\tau}{\tau_0}
\end{equation}
for $2\tau_0/3 \leq \tau \leq \tau_0$, and
\begin{equation}\label{lvoz}
 \LV_\zeta(\sigma,\tau) \leq (3-3\sigma) \frac{\tau}{\tau_0}
\end{equation}
for $2 \leq \tau < 4\tau_0/3$.  Then $\A(\sigma) \leq \frac{3}{\tau_0}$.
\end{corollary}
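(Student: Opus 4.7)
The plan is to observe that \Cref{zero-large-cor2} is an essentially immediate specialization of \Cref{zero-large-cor}, so the proof amounts to recognizing the hypotheses \eqref{lvo}, \eqref{lvoz} as bounds on the two suprema appearing in \Cref{zero-large-cor}, and then dividing by $1-\sigma$.

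First I would rewrite the hypotheses in the convenient ratio form. Dividing \eqref{lvo} by $\tau$ (permitted since $\tau \geq 2\tau_0/3 > 0$) yields
\[
\frac{\LV(\sigma,\tau)}{\tau} \leq \frac{3-3\sigma}{\tau_0} = \frac{3(1-\sigma)}{\tau_0}
\]
for all $\tau \in [2\tau_0/3,\tau_0]$, and similarly \eqref{lvoz} gives
\[
\frac{\LV_\zeta(\sigma,\tau)}{\tau} \leq \frac{3(1-\sigma)}{\tau_0}
\]
for all $\tau \in [2, 4\tau_0/3)$. Each of the two suprema in \Cref{zero-large-cor} is therefore at most $3(1-\sigma)/\tau_0$ (with the convention that an empty supremum contributes $-\infty$).

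Applying \Cref{zero-large-cor} with this $\tau_0$, we conclude
\[
\A(\sigma)(1-\sigma) \leq \frac{3(1-\sigma)}{\tau_0}.
\]
Since $1/2 < \sigma < 1$, we have $1-\sigma > 0$, so dividing through gives $\A(\sigma) \leq 3/\tau_0$, as claimed.

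There is no real obstacle here; the only point worth flagging is the convention for the empty supremum. If $\tau_0$ is small enough that $[2,4\tau_0/3)$ is empty (which happens when $\tau_0 \leq 3/2$), the hypothesis \eqref{lvoz} is vacuous and the corresponding supremum in \Cref{zero-large-cor} equals $-\infty$, so the bound still goes through via the surviving $\LV$ supremum. Otherwise both hypotheses contribute, and the uniform bound $3(1-\sigma)/\tau_0$ on the two suprema is exactly what is needed.
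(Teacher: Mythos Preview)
Your proof is correct and matches the paper's approach: the paper presents \Cref{zero-large-cor2} as an immediate special case of \Cref{zero-large-cor} without giving a separate proof, and your argument spells out exactly this specialization. The remark about the empty-supremum convention when $\tau_0 \leq 3/2$ is also in line with how the paper uses the result (e.g., in the proofs of \Cref{montgomery_implies_density} and \Cref{thm:ingham_zero_density2}).
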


The reason why this particular special case is convenient is because the inequality
\begin{equation}\label{obvious}
    2 - 2\sigma \leq (3-3\sigma) \frac{\tau}{\tau_0}
\end{equation}
obviously holds for $\tau \geq 2\tau_0/3$.  That is to say, we automatically verify \eqref{lvo} in regimes where the Montgomery conjecture holds. In fact, we can do a bit better, thanks to subdivision:

\begin{corollary}\label{zero-large-cor3} Let $1/2 < \sigma < 1$ and $\tau_0 > 0$. Suppose that one has the bound \eqref{lvoz}
   for $2 \leq \tau < 4\tau_0/3$, and the Montgomery conjecture $\LV(\sigma,\tau) \leq 2-2\sigma$ whenever $0 \leq \tau \leq \tau_0+\sigma-1$.  Then $\A(\sigma) \leq \frac{3}{\tau_0}$.
\end{corollary}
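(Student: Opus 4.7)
The plan is to derive this corollary from \Cref{zero-large-cor2} by verifying its hypotheses. The bound \eqref{lvoz} for $2 \leq \tau < 4\tau_0/3$ is supplied by hypothesis, so the task reduces to establishing \eqref{lvo}, namely
$$ \LV(\sigma,\tau) \leq (3-3\sigma) \tau/\tau_0 \qquad \text{for } 2\tau_0/3 \leq \tau \leq \tau_0.$$
I would first dispatch the degenerate range $\tau_0 < 3(1-\sigma)$: here $3/\tau_0 > 1/(1-\sigma)$, and the conclusion follows from the trivial bound $\A(\sigma)(1-\sigma) \leq \A(1/2) \cdot (1/2) = 1$, which is a consequence of \eqref{adens} together with the monotonicity $N(\sigma,T) \leq N(1/2,T)$. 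So assume henceforth that $\tau_0 \geq 3(1-\sigma)$, which guarantees the threshold $\tau_1 \coloneqq \tau_0 + \sigma - 1$ lies in the interval $[2\tau_0/3, \tau_0]$.

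Next, I would split the verification of \eqref{lvo} across the subintervals $[2\tau_0/3,\tau_1]$ and $[\tau_1,\tau_0]$. On the first subinterval, the Montgomery conjecture hypothesis applies and gives $\LV(\sigma,\tau) \leq 2-2\sigma$; this meets the target bound at $\tau = 2\tau_0/3$, since $2-2\sigma = (3-3\sigma) \cdot (2\tau_0/3)/\tau_0$, and for larger $\tau$ the target grows linearly while the Montgomery bound remains constant, so the desired inequality is automatic. On the second subinterval, I would apply Huxley subdivision (\Cref{hux-sub}) anchored at $\tau_1$:
$$ \LV(\sigma,\tau) \leq \LV(\sigma,\tau_1) + \tau - \tau_1 \leq (2-2\sigma) + \tau - (\tau_0 + \sigma - 1) = 3-3\sigma + \tau - \tau_0.$$
Comparing this against the target $(3-3\sigma)\tau/\tau_0$ and using $\tau \leq \tau_0$, the inequality rearranges to the elementary check $\tau_0 \geq 3(1-\sigma)$, which is precisely the regime we have already arranged.

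The heart of the argument is the interplay between the Montgomery conjecture at $\tau_1 = \tau_0 + \sigma - 1$ and Huxley subdivision: the shift by $\sigma - 1$ is calibrated so that the linear Huxley extension $\tau \mapsto 3-3\sigma + \tau - \tau_0$ and the target line $\tau \mapsto (3-3\sigma)\tau/\tau_0$ cross exactly at $\tau = \tau_0$. The main (and essentially only) obstacle is the need to isolate the range $\tau_0 < 3(1-\sigma)$ where the bound becomes vacuous; once that case is removed, substitution into \Cref{zero-large-cor2} closes the argument.
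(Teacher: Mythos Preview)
Your proof is correct and follows essentially the same approach as the paper. The paper's version is only cosmetically different: rather than splitting $[2\tau_0/3,\tau_0]$ at $\tau_1 = \tau_0+\sigma-1$, it packages both cases into the single bound $\LV(\sigma,\tau) \leq \max(2-2\sigma,\ 3-3\sigma+\tau-\tau_0)$ and then checks that each branch of the maximum is dominated by $(3-3\sigma)\tau/\tau_0$ on the interval, but the underlying inputs (the degenerate case via \eqref{adens}, the Montgomery hypothesis on $[0,\tau_1]$, and Huxley subdivision from $\tau_1$) are identical to yours.
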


\begin{proof}  We may assume that $\tau_0 \geq 3-3\sigma$, since otherwise we can use the bound
    $$ \A(\sigma)(1-\sigma) \leq \A(1/2)(1-1/2) = 1$$
coming from \eqref{adens} and the monotonicity properties of $N(\sigma,T)$.  By \Cref{hux-sub} we have
$$ \LV(\sigma,\tau) \leq \max( 2-2\sigma, 3-3\sigma + \tau - \tau_0 )$$
for all $\tau \geq 0$.  But both expressions on the right-hand side are bounded by $(3-3\sigma) \frac{\tau}{\tau_0}$ for $2\tau_3 \leq \tau \leq \tau_0$ and $\tau_0 \geq 3-3\sigma$, so the claim follows from Corollary \ref{zero-large-cor2}.
\end{proof}

With this machinery we can now quickly recover some existing zero density theorems, both conditional and unconditional. For instance, we have

\begin{theorem}\label{montgomery_implies_density} The Montgomery conjecture implies the density hypothesis.
\end{theorem}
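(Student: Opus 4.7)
The plan is to apply \Cref{zero-large-cor3} with the choice $\tau_0 = 3/2$. With this choice, $3/\tau_0 = 2$, which is exactly the bound required by the density hypothesis, so the natural move is to verify the two hypotheses of that corollary for this specific $\tau_0$.

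First I would examine the range $2 \leq \tau < 4\tau_0/3$ appearing in the hypothesis \eqref{lvoz}: with $\tau_0 = 3/2$ this becomes $2 \leq \tau < 2$, which is empty, so \eqref{lvoz} holds vacuously with no input on $\LV_\zeta$ needed. Next I would check the remaining hypothesis, namely that the Montgomery conjecture bound $\LV(\sigma,\tau) \leq 2-2\sigma$ holds for $0 \leq \tau \leq \tau_0 + \sigma - 1 = 1/2 + \sigma$; but this is immediate from \Cref{montgomery-conj'}, which gives the bound for all $\tau \geq 0$. Therefore \Cref{zero-large-cor3} yields $\A(\sigma) \leq 3/\tau_0 = 2$ for every fixed $1/2 < \sigma < 1$. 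Finally, the case $\sigma = 1/2$ is covered by \eqref{adens}, which gives $\A(1/2) = 2$ directly, completing the verification of the density hypothesis across the full range $1/2 \leq \sigma < 1$.

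There is no real obstacle here, as the work has been entirely absorbed into \Cref{zero-large-cor3}; the only thing to check is that $\tau_0 = 3/2$ is the correct choice and that it makes the $\LV_\zeta$ requirement vacuous. If one were instead to attempt the proof from scratch via \Cref{zero-from-large}, the main thing to verify would be that $\sup_{\tau \geq 2} \LV_\zeta(\sigma,\tau)/\tau$ and $\limsup_{\tau\to\infty} \LV(\sigma,\tau)/\tau$ are both bounded by $2(1-\sigma)$ under the Montgomery conjecture; the first bound would require using \Cref{add-bound}(iii) (combined with \eqref{lvz-lv}) to propagate the Montgomery bound into the $\LV_\zeta$ estimate in the range $\tau \geq 2$, but \Cref{zero-large-cor3} has already packaged this propagation, so invoking it directly is the cleanest route.
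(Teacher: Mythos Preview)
Your proof is correct and essentially identical to the paper's: both choose $\tau_0 = 3/2$ so that the $\LV_\zeta$ hypothesis \eqref{lvoz} becomes vacuous, and then the Montgomery conjecture supplies the remaining $\LV$ input. The only cosmetic difference is that the paper invokes \Cref{zero-large-cor2} directly (using \eqref{obvious} to see that Montgomery gives \eqref{lvo}), whereas you invoke its immediate consequence \Cref{zero-large-cor3}, which already has the Montgomery hypothesis built in; your explicit treatment of $\sigma = 1/2$ via \eqref{adens} is a nice touch the paper leaves implicit.
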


\begin{proof} Apply Corollary \ref{zero-large-cor2} with $\tau_0=3/2$ (so that \eqref{lvoz} is vacuously true).
\end{proof}

\begin{theorem}\label{lindelof_implies_density} \cite{ingham_estimation_1940}, \cite{halasz_distribution_1969}
    The Lindel\"{o}f hypothesis implies the density hypothesis, and also that $\A(\sigma) \leq 0$ for $3/4 < \sigma \leq 1$.
\end{theorem}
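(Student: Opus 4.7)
The plan is to combine the Lindel\"of hypothesis with Lemma \ref{zero-from-large} (or its corollaries). Lindel\"of gives $\mu(\sigma)=\max(0,1/2-\sigma)$; in particular $\mu(1/2)=0$, and by Lemma \ref{add-bound}(i) this yields $\LV_\zeta(\sigma,\tau)=-\infty$ for all fixed $\sigma>1/2$ and $\tau>0$. Equivalently, one may feed the Lindel\"of-consequent moment estimate $\int_T^{2T}|\zeta(1/2+it)|^{2A}\,dt\ll T^{1+o(1)}$ (valid for arbitrarily large fixed $A$) into Lemma \ref{add-bound}(ii). Either way, the $\LV_\zeta$ supremum appearing in Lemma \ref{zero-from-large} is $-\infty$ for every $\sigma>1/2$, and it remains only to control $\limsup_{\tau\to\infty}\LV(\sigma,\tau)/\tau$.

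For the $\LV$ side I would invoke a Hal\'asz--Montgomery large-sieve inequality: for a $V$-large value pattern $(N,T,V,(a_n),J,W)$ with $|a_n|\le 1$, writing $\xi_r(n)=\phi(n/N) n^{-it_r}$ for a smooth bump $\phi$, one has
\[
|W| V^2 \ll \|a\|_2^2 \max_{t_r\in W}\sum_{t_s\in W}|K(t_r-t_s)|,
\]
where $K(u)=\sum_n \phi(n/N)^2 n^{-iu}$. Mellin inversion, a contour shift to the critical line, and Lindel\"of (through $\mu(1/2)=0$) give $|K(u)|\ll_A N(1+|u|)^{-A}+N^{1/2}(1+|u|)^{o(1)}$ for every fixed $A$. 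Summing over $W\subset[T,2T]$ yields $|W| N^{2\sigma}\ll N^2+|W|N^{3/2}T^{o(1)}$. For $\sigma>3/4$ the ratio $N^{3/2-2\sigma}T^{o(1)}$ is $o(1)$ for every fixed $\tau$, so the second term absorbs into the first and $\LV(\sigma,\tau)\le 2-2\sigma$ for all fixed $\tau$. In particular $\LV(\sigma,\tau)/\tau\to 0$, and Lemma \ref{zero-from-large} then delivers $\A(\sigma)\le 0$ for all $3/4<\sigma\le 1$.

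For the density hypothesis in the remaining range $1/2<\sigma\le 3/4$, I would apply Corollary \ref{zero-large-cor3} with $\tau_0=3/2$: hypothesis \eqref{lvoz} is vacuous since the range $2\le \tau<4\tau_0/3=2$ is empty, and it suffices to verify Montgomery's conjecture $\LV(\sigma,\tau)\le 2-2\sigma$ for $\tau\le\tau_0+\sigma-1=\sigma+1/2$. The $L^2$ mean value theorem (Theorem \ref{l2-mvt}) handles $\tau\le 1$. The remaining sliver $1<\tau\le\sigma+1/2$ is handled by reprising the Hal\'asz step, this time applied to the $k$th power $D(t)^k$ (using Lemma \ref{power-lemma}) and feeding in the Lindel\"of-consequent $L^{2k}$ moment bound $\int_T^{2T}|\zeta(1/2+it)|^{2k}\,dt\ll T^{1+o(1)}$ for arbitrarily large $k$; this improves the off-diagonal estimate sufficiently to extend Montgomery's bound to the required $\tau$-range, and Corollary \ref{zero-large-cor3} then yields $\A(\sigma)\le 2$.

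The main obstacle lies in this final step. The naive Hal\'asz--Montgomery bound closes only for $\sigma>3/4$, since the diagonal contribution is of size $N$ while the off-diagonal kernel $|K|$ is only $N^{1/2+o(1)}$, producing the critical threshold $N^{3/2}/N^{2\sigma}$. To push Montgomery's conjecture down to $\sigma$ arbitrarily close to $1/2$ over the short extra range $1<\tau\le \sigma+1/2$, one must trade the power $k$ in Lemma \ref{power-lemma} against the moment exponent in the Lindel\"of-driven moment bound; alternatively, one can carry out Ingham's mollifier-based zero-detector argument directly. Either route is the substantive part of the proof, with the threshold $\sigma=3/4$ marking exactly where the off-diagonal kernel is absorbed into the diagonal without recourse to higher-moment information.
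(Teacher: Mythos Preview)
Your treatment of the $\LV_\zeta$ term via Lemma \ref{add-bound}(i) and of the range $\sigma>3/4$ via the Hal\'asz--Montgomery inequality is correct and matches the paper (which simply cites \cite{halasz_distribution_1969} for the latter).  The problem is in your handling of the density hypothesis for $1/2<\sigma\le 3/4$.

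By choosing $\tau_0=3/2$ in Corollary \ref{zero-large-cor3} you commit yourself to proving the Montgomery bound $\LV(\sigma,\tau)\le 2-2\sigma$ on the sliver $1<\tau\le \sigma+1/2$.  You then assert that this follows from Hal\'asz plus Lindel\"of moment input, but it does not.  In the Hal\'asz inequality the off-diagonal kernel $K(u)=\sum_{n\sim N}n^{-iu}$ obeys the \emph{sharp} bound $|K(u)|\asymp N^{1/2+o(1)}$ under Lindel\"of; one cannot do better, since this is the genuine size of a length-$N$ partial sum on the critical line.  Combined with $\|a\|_2^2\ll N$ this produces the fixed threshold $2\sigma>3/2$.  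Raising to a $k$th power (Lemma \ref{power-lemma}) replaces $N$ by $N^k$ but leaves $\sigma$ unchanged, so the threshold is still $\sigma>3/4$; and the Lindel\"of moment bounds you invoke control $\LV_\zeta$, not $\LV$ for arbitrary coefficients.  As far as is known, Lindel\"of does \emph{not} imply Montgomery's conjecture for $\sigma\le 3/4$, so your final paragraph is describing an obstacle that you have not actually overcome.

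The paper avoids this entirely by \emph{not} asking for Montgomery.  It uses Corollary \ref{zero-large-cor} (not \ref{zero-large-cor3}), together with the trivial combination of Theorem \ref{l2-mvt} and Lemma \ref{power-lemma}: taking $k=\lfloor\tau\rfloor$ in
\[
\LV(\sigma,\tau)\le \max\bigl(k(2-2\sigma),\,\tau+k(1-2\sigma)\bigr)
\]
gives $\LV(\sigma,\tau)\le (2-2\sigma)\tau+O(1)$, hence $\LV(\sigma,\tau)/\tau\le 2-2\sigma+\eps$ once $\tau_0$ is large.  Since the $\LV_\zeta$ supremum is $-\infty$ under Lindel\"of, Corollary \ref{zero-large-cor} then gives $\A(\sigma)(1-\sigma)\le 2-2\sigma$ directly.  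This is both simpler and unconditional on any Montgomery-type statement in the range $\sigma\le 3/4$.
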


\begin{proof} We will apply Corollary \ref{zero-large-cor}.  From \Cref{add-bound}(i) and the Lindel\"{o}f hypothesis we have
$$ \sup_{2 \leq \tau < 4\tau_0/3} \LV_\zeta(\sigma,\tau)/\tau\leq 0.$$
From Theorem \ref{l2-mvt} and Lemma \ref{power-lemma} we have
\begin{equation*}\label{ap}
     \LV(\sigma,\tau) \leq \max( (2-2\sigma)k, \tau + (1 - 2 \sigma)k )
\end{equation*}
for any natural number $k$ and $\tau \geq 1$; setting $k$ to be the integer part of $\tau$ we conclude in particular that
$$ \LV(\sigma,\tau) \leq (2-2\sigma)\tau + O(1),$$
and hence by taking $\tau_0$ large enough, we can make
$\sup_{2\tau_0/3 \leq \tau \leq \tau_0} \LV(\sigma,\tau)/\tau$ bounded by $2-2\sigma + \eps$ for any $\eps>0$.  This already gives the density hypothesis bound $\A(\sigma) \leq 2$.  For $\sigma > 3/4$, we may additionally apply \cite[Theorem 1]{halasz_distribution_1969} to obtain the Montgomery conjecture in this setting for all $\tau$, thus allowing one to make
$\sup_{2\tau_0/3 \leq \tau \leq \tau_0} \LV(\sigma,\tau)/\tau$ arbitrarily small, giving the bound $\A(\sigma) \leq 0$.
\end{proof}

\begin{theorem}[Ingham's bound]\label{thm:ingham_zero_density2}\cite{ingham_estimation_1940} For any $1/2 < \sigma < 1$, one has $\A(\sigma) \leq \frac{3}{2-\sigma}$.
\end{theorem}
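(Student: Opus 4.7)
The plan is to apply Corollary \ref{zero-large-cor3} with the specific choice $\tau_0 = 2-\sigma$, which is positive and in fact lies in $(1,3/2)$ for $\sigma \in (1/2,1)$. With this choice the desired bound $\A(\sigma) \leq 3/\tau_0$ is precisely $\A(\sigma) \leq \frac{3}{2-\sigma}$, so everything reduces to verifying the two hypotheses of that corollary.

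For the Montgomery conjecture hypothesis, we must show $\LV(\sigma,\tau) \leq 2-2\sigma$ for $0 \leq \tau \leq \tau_0+\sigma-1 = 1$. This is immediate from \Cref{l2-mvt}, which gives
\[
\LV(\sigma,\tau) \leq \max(2-2\sigma,\; 1+\tau-2\sigma),
\]
and the second expression is bounded by $2-2\sigma$ precisely when $\tau \leq 1$. (Indeed, this is exactly the regime $\tau \leq 1$ in which the $L^2$ mean value theorem confirms the Montgomery conjecture.)

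For the $\LV_\zeta$ hypothesis, we must show $\LV_\zeta(\sigma,\tau) \leq (3-3\sigma)\tau/\tau_0$ on $2 \leq \tau < 4\tau_0/3$. But with $\tau_0 = 2-\sigma$ and $\sigma > 1/2$, we have
\[
\frac{4\tau_0}{3} = \frac{4(2-\sigma)}{3} < \frac{4 \cdot (3/2)}{3} = 2,
\]
so the range of $\tau$ is empty and the hypothesis holds vacuously. Invoking \Cref{zero-large-cor3} then yields the claimed bound.

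There is essentially no obstacle here: once one notices that the critical choice is $\tau_0 = 2-\sigma$ (so that $\tau_0 + \sigma - 1 = 1$ aligns exactly with the range of validity of the $L^2$ mean value theorem), both hypotheses of \Cref{zero-large-cor3} are satisfied automatically, one from \Cref{l2-mvt} and the other by vacuity. The only mildly non-obvious step is recognising that Ingham's classical argument, which traditionally uses the $L^2$ mean value theorem and a Halász-style zero-detection, is cleanly encoded in this modern framework by this single parameter choice.
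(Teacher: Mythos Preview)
Your proof is correct and follows essentially the same approach as the paper: apply Corollary~\ref{zero-large-cor3} with $\tau_0 = 2-\sigma$, note that $4\tau_0/3 < 2$ makes the $\LV_\zeta$ hypothesis vacuous, and verify the Montgomery conjecture hypothesis for $\tau \leq \tau_0 + \sigma - 1 = 1$ via Theorem~\ref{l2-mvt}. Your write-up simply spells out a few more details than the paper's terse version.
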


\begin{proof}
We apply Corollary \ref{zero-large-cor3} with $\tau_0 := 2-\sigma$.  Here we have $4\tau_0/3 < 2$ since $\sigma>1/2$, so the claim \eqref{lvoz} is automatic; and the Montgomery conjecture hypothesis follows from Theorem \ref{l2-mvt}.
\end{proof}

\begin{theorem}[Huxley bound]\label{huxley-bound}\cite{Huxley} For any $1/2 < \sigma < 1$, one has $\A(\sigma) \leq \frac{3}{3\sigma-1}$. (In particular, the density hypothesis holds for $\sigma \geq 5/6$.)
\end{theorem}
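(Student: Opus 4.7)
The plan is to invoke \Cref{zero-large-cor3} with the choice $\tau_0 \coloneqq 3\sigma - 1$, which would yield the target bound $\A(\sigma) \leq 3/\tau_0 = 3/(3\sigma-1)$. Two hypotheses must then be verified: the Montgomery conjecture $\LV(\sigma,\tau) \leq 2-2\sigma$ for $0 \leq \tau \leq \tau_0 + \sigma - 1 = 4\sigma - 2$, and the zeta large value bound $\LV_\zeta(\sigma,\tau) \leq (3-3\sigma)\tau/(3\sigma-1)$ for $2 \leq \tau < 4\tau_0/3 = 4\sigma - 4/3$.

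For the first hypothesis, I would invoke the Huxley large values theorem \eqref{huxley-lvt}, which gives $\LV(\sigma,\tau) \leq \max(2-2\sigma, 4 + \tau - 6\sigma)$; the second expression is dominated by $2-2\sigma$ precisely when $\tau \leq 4\sigma - 2$, exactly matching the required range.

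For the second hypothesis, there are two subcases. If $1/2 < \sigma \leq 5/6$, then $4\tau_0/3 \leq 2$, so the interval $[2, 4\tau_0/3)$ is empty and the condition holds vacuously. If $5/6 < \sigma < 1$, I would use the twelfth moment bound \eqref{twelfth-bound}, giving $\LV_\zeta(\sigma,\tau) \leq 2\tau - 12(\sigma - 1/2)$. Since both this upper bound and the target bound are linear in $\tau$, it suffices to verify the inequality at the two endpoints of $[2, 4\sigma - 4/3]$. At $\tau = 4\sigma - 4/3$, one computes that both sides simplify and the inequality reduces to $10/3 \leq 4$; at $\tau = 2$, the twelfth moment bound yields $10 - 12\sigma < 0$ for $\sigma > 5/6$, while the target bound $(3-3\sigma)\cdot 2/(3\sigma-1)$ is strictly positive.

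The main point, though perhaps too mild to be called an obstacle, is that the target $3/(3\sigma-1)$ drops below $2$ precisely when $\sigma > 5/6$, so in that regime an auxiliary input beyond the Huxley large values theorem is required to control $\LV_\zeta$ for $\tau$ between $2$ and $4\sigma - 4/3$; the twelfth moment bound supplies exactly this input, and the linearity of both sides in $\tau$ makes the verification a two-point check.
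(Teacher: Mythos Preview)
Your proof is correct and follows the same overall scheme as the paper: apply \Cref{zero-large-cor3} with $\tau_0 = 3\sigma-1$, verify the Montgomery hypothesis via \eqref{huxley-lvt}, and note the $\sigma \leq 5/6$ case is vacuous. The one genuine difference is your treatment of the range $\sigma > 5/6$. The paper uses the classical van der Corput exponent pair $(1/6,2/3)$ together with \Cref{add-bound}(i) and \Cref{exp-pair-mu}: this gives $\mu(1/2) \leq 1/6$, hence $\LV_\zeta(\sigma,\tau) = -\infty$ whenever $\tau < 6\sigma - 3$, and one checks $6\sigma - 3 > 4\tau_0/3$ for $\sigma > 5/6$, so \eqref{lvoz} holds trivially. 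You instead invoke the Heath--Brown twelfth moment bound \eqref{twelfth-bound} and verify the resulting linear inequality at the endpoints. Your route is perfectly valid, but it brings in a considerably deeper input (Heath--Brown 1978) where the paper gets by with a 1920s exponent pair, and it requires an explicit inequality check where the paper's $\LV_\zeta = -\infty$ makes the verification immediate.
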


\begin{proof} We apply Corollary \ref{zero-large-cor3} with $\tau_0 := 3\sigma-1$.  The Montgomery conjecture hypothesis follows from \eqref{huxley-lvt}. So it remains to show that \eqref{lvoz} holds for $2 \leq \tau < 4\tau_0/3$.  For $\sigma \leq 5/6$ we have $4\tau_0/3 \leq 2$, so the claim is vacuously true.  For $\sigma > 5/6$ we use \Cref{add-bound}(i), \Cref{exp-pair-mu} and the classical exponent pair $AB(0,1) = (\frac{1}{6},\frac{2}{3})$ to conclude that
$\LV_\zeta(\sigma,\tau) = -\infty$ whenever $\sigma > 1/2 + \tau/6$, but this is precisely $\tau < 6\sigma - 3$.  Since $6\sigma-3 > 4\tau_0/3$ when $\sigma > 5/6$, we obtain the claim.
\end{proof}

\begin{theorem}[Guth--Maynard bound]\label{guth-maynard-density} For any $1/2 < \sigma < 1$, one has $\A(\sigma) \leq \frac{15}{3+5\sigma}$.
\end{theorem}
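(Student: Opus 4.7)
The plan is to apply \Cref{zero-large-cor3} with $\tau_0 := (3+5\sigma)/5 = 3/5 + \sigma$, chosen so that $3/\tau_0 = 15/(3+5\sigma)$ matches the claimed bound. Two hypotheses must be checked: the Montgomery conjecture $\LV(\sigma,\tau) \le 2-2\sigma$ on $[0, \tau_0+\sigma-1] = [0, 2\sigma-2/5]$, and the bound $\LV_\zeta(\sigma,\tau) \le (3-3\sigma)\tau/\tau_0$ on $[2, 4\tau_0/3)$. For $\sigma \le 7/10$, the claim already follows from Ingham's \Cref{thm:ingham_zero_density2} since $3/(2-\sigma) \le 15/(3+5\sigma)$ in that range; so the substantive case is $\sigma > 7/10$.

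For the Montgomery hypothesis, invoke the Guth--Maynard LVT \eqref{guth-maynard-lvt}: the sloped term $\tau + 12/5 - 4\sigma$ is bounded by $2-2\sigma$ precisely on $\tau \le 2\sigma - 2/5$, matching the required range, and the plateau term $18/5-4\sigma$ is bounded by $2-2\sigma$ when $\sigma \ge 4/5$. This settles the regime $\sigma \ge 4/5$ cleanly. For the gap $\sigma \in (7/10, 4/5)$, where the plateau exceeds $2-2\sigma$, \Cref{zero-large-cor3} is not directly applicable, and one instead invokes \Cref{zero-large-cor2} with the same $\tau_0$ and verifies the weaker bound $\LV(\sigma,\tau) \le (3-3\sigma)\tau/\tau_0$ on $[2\tau_0/3, \tau_0]$ by combining the $L^2$ mean value theorem (\Cref{l2-mvt}) near the left endpoint with the Guth--Maynard LVT near the right endpoint. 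A direct computation of where $1 + \tau - 2\sigma$ meets the target line and where $18/5 - 4\sigma$ meets it shows the two validity intervals overlap, covering $[2\tau_0/3, \tau_0]$ with no gaps.

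For the $\LV_\zeta$ hypothesis, the interval $[2, 4\tau_0/3)$ is empty whenever $\sigma \le 9/10$. For $\sigma > 9/10$, apply the twelfth moment bound \eqref{twelfth-bound}: since $2\tau - 12(\sigma-1/2)$ has slope $2$ in $\tau$ while the target has slope $(3-3\sigma)/\tau_0 \le 1/5$, it suffices to check the inequality at the right endpoint $\tau = 4\tau_0/3$, which reduces to $\sigma \ge 27/40$ and is comfortably satisfied. The main obstacle is the hybrid case analysis for $\sigma \in (7/10, 4/5)$: tracking which of the $L^2$ and Guth--Maynard linear bounds is sharper at each $\tau$, and verifying algebraically that the intersection point of $L^2$ with the target lies strictly above the intersection of the Guth--Maynard plateau with the target, so that the two regimes dovetail across $[2\tau_0/3, \tau_0]$.
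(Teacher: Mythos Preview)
Your proposal is correct and follows essentially the same approach as the paper: both set $\tau_0 = (3+5\sigma)/5$ and, in the critical range $7/10 < \sigma < 4/5$, apply \Cref{zero-large-cor2} by splicing the $L^2$ mean value theorem with the Guth--Maynard large value theorem across $[2\tau_0/3,\tau_0]$ (the overlap check you describe is precisely the paper's inequality \eqref{18-5} at the crossover $\tau = 13/5 - 2\sigma$). The only organizational difference is that the paper disposes of $\sigma \geq 4/5$ by citing Huxley's bound \Cref{huxley-bound}, whereas you handle that range directly via \Cref{zero-large-cor3} and the twelfth moment bound for $\sigma > 9/10$; both routes are valid.
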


\begin{proof} We may assume that $7/10 < \sigma < 8/10$, since the bound follows from the Ingham and Huxley bounds otherwise.  We apply Corollary \ref{zero-large-cor2} with $\tau_0 := \frac{3+5\sigma}{5}$.  We have $4\tau_0/3 < 2$, so the claim \eqref{lvoz} is vacuous and we only need to establish \eqref{lvo}.  We split into the subranges $13/5-2\sigma \leq \tau < \tau_0$ and $2\tau_0/3 \leq \tau \leq 13/5-2\sigma$. In the former range we use \eqref{guth-maynard-lvt} (and \eqref{obvious}), and reduce to showing that
$$ 18/5 - 4 \sigma \leq (3-3\sigma) \frac{\tau}{\tau_0},$$
and
$$ \tau + 12/5 - 4\sigma \leq (3-3\sigma) \frac{\tau}{\tau_0}$$
for $13/5-2\sigma \leq \tau < \tau_0$.  The first inequality follows from
\begin{equation}\label{18-5}
 18/5 - 4 \sigma \leq (3-3\sigma) \frac{13/5-2\sigma}{\tau_0}
\end{equation}
which can be verified numerically in the range $7/10 < \sigma < 8/10$.  Finally, the third inequality is actually an equality when $\tau=\tau_0$ and the right-hand side has a larger slope in $\tau$ than the left (since $\tau_0 \geq 3-3\sigma$), so the claim follows as well.

In the remaining region $2\tau_0/3 \leq \tau \leq 13/5-2\sigma$, we use Theorem \ref{l2-mvt} and \eqref{obvious} to reduce to showing that
$$ \tau + 1 - 2\sigma \leq (3-3\sigma) \frac{\tau}{\tau_0}$$
in this range.  This follows again from \eqref{18-5} which guarantees the inequality at the right endpoint $\tau = 13/5-2\sigma$.
\end{proof}

\subsection{New zero density estimates}

The previous zero density estimates were already known in the literature.  By computer-assisted optimisation, we were able to use the same set of tools to obtain some new zero density estimates.  The optimisation procedure is a higher-dimensional analogue of the program used to produce \Cref{beta-table}; however instead of considering feasible $(\alpha, \beta)$ tuples, one now combines known large value estimates into a $(\sigma, \tau, \LV(\sigma, \tau))$ feasible region (and zeta large value estimates into a $(\sigma, \tau, \LV_\zeta(\sigma,\tau))$ feasible region).  The quantities $\sup_{2\tau_0/3\le\tau\le\tau_0}\LV(\sigma, \tau)/\tau$ and $\sup_{2\le\tau\le4\tau_0/3}\LV_\zeta(\sigma,\tau)/\tau$ may then be computed as solutions to well-defined constrained optimisation problems. 

\begin{theorem}[Improved Heath-Brown zero density theorem]\label{hb-density2} For any $7/10 < \sigma \leq 1$, one has $$\A(\sigma) \leq \frac{3}{10\sigma-7}.$$
\end{theorem}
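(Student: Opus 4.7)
The approach is to apply \Cref{zero-large-cor2} with the choice $\tau_0 \coloneqq 10\sigma - 7$, so that the conclusion $\A(\sigma) \leq 3/\tau_0$ matches the claim exactly. The motivation for this specific $\tau_0$ is a matching of endpoints: substituting $\tau = \tau_0$ into the second branch $10 + \tau - 13\sigma$ of the Heath--Brown large values estimate \eqref{hb-opt} gives precisely $3-3\sigma$, which is also the value of the target line $(3-3\sigma)\tau/\tau_0$ at $\tau = \tau_0$. This choice only succeeds in the regime where a certain slope condition holds; for $7/10 < \sigma \leq 6/7$, the elementary comparison $10\sigma - 7 \leq 3\sigma - 1$ shows that the claim is an immediate consequence of the Huxley bound \Cref{huxley-bound}, so I would dispose of that sub-range first and then concentrate on $6/7 < \sigma \leq 1$.

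In this remaining range, hypothesis \eqref{lvoz} of \Cref{zero-large-cor2} is handled by the estimate \eqref{lvz-340}, which in turn rests on \Cref{mu-bound} and the exponent pair $(3/40,31/40)$ of \Cref{old-exp-pair}. A direct algebraic check shows that the upper limit $\tau < 4\tau_0/3$ of the relevant range is exactly the condition $7/10 + 3\tau/40 < \sigma$, so \eqref{lvz-340} forces $\LV_\zeta(\sigma,\tau) = -\infty$ throughout. The non-emptiness of $[2, 4\tau_0/3)$ and the pointwise inequality at $\tau = 2$ (which reduces to $\sigma > 17/20$) both follow from $\sigma > 6/7$.

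The remaining hypothesis \eqref{lvo} follows from \eqref{hb-opt} on the interval $2\tau_0/3 \leq \tau \leq \tau_0$. The first branch $2-2\sigma$ is dominated by the target $(3-3\sigma)\tau/\tau_0$ throughout, by the elementary inequality \eqref{obvious}. The second branch $10 + \tau - 13\sigma$ meets the target with equality at $\tau = \tau_0$; since its $\tau$-slope is $1$ while the target has slope $(3-3\sigma)/\tau_0$, the second branch stays below the target on the entire interval provided $(3-3\sigma)/\tau_0 \leq 1$, i.e., $\sigma \geq 10/13$. This slope condition is comfortably implied by $\sigma > 6/7$.

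The main potential obstacle, and the reason for the range split, is precisely this slope comparison: it fails for $7/10 < \sigma < 10/13$, so \eqref{hb-opt} cannot be pushed through uniformly in that sub-range, forcing the fallback to Huxley's bound there. Once the split is arranged, no additional ingredients beyond \eqref{hb-opt}, \eqref{lvz-340}, \Cref{huxley-bound}, and \Cref{zero-large-cor2} appear to be needed.
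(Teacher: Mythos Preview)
Your proposal is correct and follows essentially the same approach as the paper: apply the zero-density-from-large-values machinery with $\tau_0 = 10\sigma-7$, handle \eqref{lvo} via the Heath--Brown large value estimate \eqref{hb-opt}, and handle \eqref{lvoz} via \eqref{lvz-340} (which ultimately rests on the exponent pair $(3/40,31/40)$). The only cosmetic difference is the treatment of the low-$\sigma$ range where the slope condition $(3-3\sigma)/\tau_0 \leq 1$ fails: you split off $7/10 < \sigma \leq 6/7$ and invoke Huxley's bound, whereas the paper appeals to \Cref{zero-large-cor3}, whose proof absorbs the case $\tau_0 < 3-3\sigma$ (i.e.\ $\sigma < 10/13$) via the trivial bound $\A(\sigma)(1-\sigma) \leq 1$.
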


This result was previously obtained in the region $3/4 \leq \sigma \leq 25/28$ by Heath-Brown \cite{heathbrown_zero_1979}.  The main new ingredient is the exponent pair in \Cref{old-exp-pair} (leading in particular to \eqref{lvz-340}, which was not available
at the time of that paper).

\begin{proof}  We apply Corollary \ref{zero-large-cor3} with $\tau_0 := 10\sigma-7$.

To verify \eqref{lvo}, we now use Theorem \ref{hb-opt} and \eqref{obvious}, and reduce to showing that
$$ \tau + 10-13\sigma \leq (3-3\sigma) \frac{\tau}{\tau_0}$$
for $2\tau_0/3 \leq \tau \leq \tau_0$.  The inequality holds at $\tau=\tau_0$ since $\tau_0 \leq 10\sigma-7$, and hence for all smaller $\tau$ since $\tau_0 \geq 3-3\sigma$.

From \eqref{lvz-340} we have we have $\LV_\zeta(\sigma,\tau) = -\infty$ whenever $\sigma > \frac{3}{40} \tau + \frac{7}{10}$, or equivalently $\tau < \frac{4}{3} (10\sigma-7)$, which then immediately gives \eqref{lvoz}.
\end{proof}

Next, we recall a well-known result of Bourgain \cite{bourgain_large_2000} that the density hypothesis $\A(\sigma) \leq 2$ holds for all $\sigma > \frac{25}{32}$.  By abstracting and then using computer assistance to optimise the arguments in that paper, we were able to improve this bound.  Below is a translation of the computer-assisted argument to human-readable form.

\begin{theorem}[Improved Bourgain density hypothesis bound]\label{bourgain-density-improved} For fixed $17/22 \leq \sigma \leq 4/5$, one has 
$$\A(\sigma) \leq \max\left(\frac{2}{9\sigma-6}, \frac{9}{8(2\sigma-1)}\right).$$
Thus one has $\A(\sigma) \leq 2/(9\sigma-6)$ for $17/22 \leq \sigma \leq 38/49$ and $\A(\sigma) \leq 9/(8(2\sigma-1))$ for $38/49 \leq \sigma \leq 4/5$.
\end{theorem}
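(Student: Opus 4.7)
The plan is to apply \Cref{zero-large-cor2} with $\tau_0$ chosen so that $3/\tau_0$ equals the target bound; since the latter is a maximum of two expressions, we set $\tau_0 := \min((27\sigma-18)/2,\ 8(2\sigma-1)/3)$. The two arguments of this minimum coincide precisely at $\sigma = 38/49$, cleanly partitioning $[17/22, 4/5]$ into the two subranges appearing in the theorem. It then remains to verify the hypotheses \eqref{lvo} and \eqref{lvoz} of the corollary in each subrange.

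For $17/22 \leq \sigma \leq 38/49$ we have $\tau_0 = (27\sigma-18)/2 < 3/2$, so $4\tau_0/3 < 2$ and \eqref{lvoz} is vacuous. To verify \eqref{lvo} on $[2\tau_0/3, \tau_0]$ we invoke Bourgain's large value theorem \Cref{bourgain-lvt}(ii): at $\tau = \tau_0$ the parameters $(\alpha_1, \alpha_2)$ can be tuned so that the two terms $-2\alpha_1 + \tau + 12 - 16\sigma$ and $4\alpha_1 + 3 - 4\sigma$ in Bourgain's maximum saturate simultaneously at $3-3\sigma$; the coincidence of these two binding constraints is precisely the defining relation for $\tau_0 = (27\sigma-18)/2$. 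For $\tau$ ranging strictly below $\tau_0$ within $[2\tau_0/3, \tau_0]$, Bourgain's theorem with modified parameters, combined with the $L^2$ mean value theorem \Cref{l2-mvt}, Huxley's large value theorem \eqref{huxley-lvt}, and the Guth--Maynard estimate \eqref{guth-maynard-lvt} --- each of which is strongest in a different portion of the interval --- yields the target bound $\LV(\sigma, \tau) \leq (3-3\sigma)\tau/\tau_0$ throughout.

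For $38/49 \leq \sigma \leq 4/5$ we take $\tau_0 = 8(2\sigma-1)/3$. Here $\tau_0$ exceeds $3/2$ precisely when $\sigma > 25/32$; for such $\sigma$ we verify \eqref{lvoz} using the twelfth moment bound \eqref{twelfth-bound}, which reduces the check on $[2, 4\tau_0/3]$ to a linear-in-$\tau$ inequality to be verified at the endpoints. Condition \eqref{lvo} is again established via Bourgain's theorem (ii): at $\tau = \tau_0$ the choice $\tau_0 = 8(2\sigma-1)/3$ is precisely the value at which a different pair of terms in Bourgain's maximum --- now involving the cross term $\alpha_1 + \alpha_2/2 + 2 - 2\sigma$ --- saturates at $3-3\sigma$, and for intermediate $\tau$ one again supplements with the auxiliary large value theorems listed above.

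The main obstacle is establishing \eqref{lvo} throughout the full interval $[2\tau_0/3, \tau_0]$. The pointwise minimum of all available large value bounds is a piecewise-linear function of $\tau$ whose pieces correspond to different active constraints in the underlying linear programs; verifying that this minimum lies below $(3-3\sigma)\tau/\tau_0$ on $[2\tau_0/3, \tau_0]$ requires partitioning the interval into several subintervals, on each of which a specific choice of large value theorem and parameters attains the target. Identifying this partition by hand is infeasible, which is why the original derivation relied on the polytope-based optimisation described in the preceding subsection. Once the partition is identified, the verification on each piece reduces to a small set of linear inequalities in $\sigma$ and $\tau$ that can be checked routinely.
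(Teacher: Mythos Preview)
Your high-level strategy is correct and matches the paper: the same choice of $\tau_0 = \min((27\sigma-18)/2,\ 8(2\sigma-1)/3)$, the use of the twelfth moment bound \eqref{twelfth-bound} for the $\LV_\zeta$ side, and the invocation of \Cref{bourgain-lvt}(ii) for the $\LV$ side. However, what you have written is a sketch, not a proof, and it contains a genuine gap together with a false claim.

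The gap is that you never verify the hypothesis $\rho \leq \min(1, 4-2\tau)$ required to apply \Cref{bourgain-lvt}(ii). This is not automatic, and the paper spends real effort on it: the Jutila large value theorem with $k=3$, \eqref{jutila-lvt}, is used first to show that either \eqref{lvo} already holds, or else $\tau$ lies above an explicit threshold \eqref{tau-lower}; in the latter regime the Jutila bound \eqref{rhomax} is then shown to force $\rho \leq \min(1,4-2\tau)$, after which Bourgain's inequality becomes available. Your list of auxiliary inputs (the $L^2$ mean value theorem, Huxley, Guth--Maynard) does not include Jutila, and none of those three is strong enough on its own to play this role in the full range $2\tau_0/3 \leq \tau \leq \tau_0$.

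The false claim is that ``identifying this partition by hand is infeasible.'' The paper does exactly this: in each of the two $\sigma$-regimes it writes down explicit closed-form choices of $\alpha_1,\alpha_2$ (as affine functions of $\tau$ and $\sigma$, with $\alpha_2$ involving a single $\max$ with zero) and then verifies, term by term, that all five expressions in Bourgain's maximum are bounded by $(3-3\sigma)\tau/\tau_0$. No case splitting on $\tau$ beyond the Jutila threshold is needed, and no Guth--Maynard input is used. So the proof is fully human-readable; your deferral to the polytope optimiser is unnecessary here.
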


\begin{proof}
    We apply Corollary \ref{zero-large-cor3} with $\tau_0 := \min( \frac{ 9(3\sigma-2)}{2}, \frac{8(2\sigma-1)}{3} )$.  For future reference we note that
    \begin{equation}\label{slope}
        \frac{1}{3} < \frac{3-3\sigma}{\tau_0} < \frac{1}{2}.
    \end{equation}

    It suffices to show that
    \begin{equation}\label{lv-st}
     \LV(\sigma,\tau) \leq \frac{\tau}{\tau_0} (3-3\sigma)
    \end{equation}
    for $2\tau_0/3 \leq \tau \leq \tau_0$, as well as
    \begin{equation}\label{lvz-st}
    \LV_\zeta(\sigma,\tau) \leq \frac{\tau}{\tau_0} (3-3\sigma)
    \end{equation}
    for $2 \leq \tau < 4\tau_0/3$.
    For \eqref{lvz-st} we use the twelfth moment bound \eqref{twelfth-bound}.  Since the slope of
    $2\tau - 12 (\sigma-1/2)$ in $\tau$ exceeds that of $\frac{\tau}{\tau_0} (3-3\sigma)$ by \eqref{slope}, it suffices to check the bound at the endpoint, i.e., to show that
    $$ 8\tau_0/3 - 12 (\sigma-1/2) \leq 4 -4\sigma$$
    or equivalently $\tau_0 \leq \frac{3(4\sigma-1)}{4}$, which one can easily check to be the case.

    Now we prove \eqref{lv-st}.  Set $\rho := \LV(\sigma,\tau)$.  From the $k=3$ case of \eqref{jutila-lvt} we have
    \begin{equation}\label{rhomax}
        \rho \leq \max \bigg(2-2\sigma, \tau + \frac{10-16\sigma}{3}, \tau + 18-24\sigma \bigg)
    \end{equation}
    which implies the required bound $\rho \leq \frac{\tau}{\tau_0}(3-3\sigma)$ unless one has
    \begin{equation}\label{tau-lower}
        \tau \geq - \max\left( \frac{10-16\sigma}{3}, 18-24\sigma\right) / \left(1 - \frac{3-3\sigma}{\tau_0}\right).
    \end{equation}
    In this regime, one can also check from \eqref{rhomax} that
    $$ \rho \leq \min(1, 4-2\tau)$$
    (with room to spare) so we may apply \Cref{bourgain-lvt}(ii) to obtain
    \begin{equation*}\label{40-alt}
    \begin{split}
    \rho &\leq \max( \alpha_2 + 2 - 2 \sigma, \alpha_1+\alpha_2/2 + 2-2\sigma, -\alpha_2 + 2\tau+4-8\sigma, \\
    &\qquad\qquad -2\alpha_1 + \tau + 12 - 16 \sigma, 4\alpha_1 + 2+\max(1,2\tau-2)-4\sigma)
    \end{split}
    \end{equation*}
    for any $\alpha_1, \alpha_2 \geq 0$.

    We first consider the case when $38/49 \leq \sigma \leq 4/5$, so that $\tau_0 = 8(2\sigma-1)/3$.
    We set
    $$ \alpha_2 := \max\left( \frac{5\tau}{4} - (1+\sigma), 0\right)$$
    and
    $$ \alpha_1 := \frac{\tau}{3} - \frac{2}{3} (7\sigma-5) - \alpha_2/6.$$
    With this choice, the expressions $\alpha_1+\alpha_2/2 + 2-2\sigma$ and $-2\alpha_1 + \tau + 12 - 16 \sigma$ are both equal to
    $\frac{\tau}{3} + \frac{16-20\sigma}{3} + \frac{\alpha_2}{3}$, while $-\alpha_2 + 2\tau+4-8\sigma$ is equal to
    $$ \frac{\tau}{3} + \frac{16-20\sigma}{3} + \frac{\alpha_2}{3} - \frac{4}{3} \left(\alpha_2 - \frac{5\tau}{4} - (1+\sigma)\right)$$
    which is less than or equal to the previous expression by definition of $\alpha_2$.  Finally, the expression
    $4\alpha_1 + 1+\max(2,2\tau-2)-4\sigma$ is equal to
    $$ \frac{4\tau}{3} + \frac{46-68\sigma}{3} + \max(1,2\tau-2) - \frac{2\alpha_2}{3}.$$
    Thus it remains to show the bounds
    \begin{equation}\label{bound-1}
     \alpha_2 + 2 - 2 \sigma \leq \frac{\tau}{\tau_0} (3-3\sigma)
    \end{equation}
    \begin{equation}\label{taut}
        \frac{\tau}{3} + \frac{16-20\sigma}{3} + \frac{\alpha_2}{3} \leq \frac{\tau}{\tau_0} (3-3\sigma)
    \end{equation}
    and
    \begin{equation}\label{tar}
        \frac{4\tau}{3} + \frac{46-68\sigma}{3}+ \max(1,2\tau-2) - \frac{2\alpha_2}{3} \leq \frac{\tau}{\tau_0} (3-3\sigma).
    \end{equation}
    For \eqref{bound-1}, we trivially have $2-2\sigma \leq \frac{\tau}{\tau_0}(3-3\sigma)$ since $\tau \geq 2\tau_0/3$, and the slope of $\frac{5\tau}{4} - (1+\sigma) + 2-2\sigma$ in $\tau$ certainly exceeds $\frac{3-3\sigma}{\tau_0}$ by \eqref{slope}, so it suffices to check the endpoint
    $$ \frac{5\tau_0}{4} - (1+\sigma) + 2-2\sigma \leq 3-3\sigma$$
    which one can check to be valid for $\sigma \leq 4/5$. Now we turn to \eqref{taut}.  From \eqref{slope} it suffices to show that
    $$ \frac{\tau_0}{3} + \frac{16-20\sigma}{3} + \frac{\frac{5\tau_0}{4} - (1+\sigma)}{3} \leq 3 -3 \sigma$$
    and
    $$ \frac{\tau}{3} + \frac{16-20\sigma}{3} + \frac{0}{3} \leq 2 -2 \sigma.$$
    The former is an identity, and the latter simplifies to $\tau \geq 14\sigma-10$, which one can check follows from \eqref{tau-lower} (with some room to spare) in the regime $38/49 \leq \sigma \leq 4/5$, giving the claim.  Finally, for \eqref{tar} it suffices to show that
    $$ \frac{4\tau}{3} + \frac{46-68\sigma}{3}  + \max(1,2\tau_0-2) - \frac{2(\frac{5\tau}{4} - (1+\sigma))}{3} \leq \frac{\tau}{\tau_0} (3-3\sigma)$$
    which by \eqref{slope} would follow from
    $$ \frac{4\tau_0}{3} + \frac{46-68\sigma}{3} + \max(1,2\tau_0-2) - \frac{2(\frac{5\tau_0}{4} - (1+\sigma))}{3} \leq 3-3\sigma$$
    and one can check that this applies for $\sigma \geq 38/49$.

    Now suppose that we are in the case $16/21 \leq \sigma \leq 38/49$, so that $\tau_0 = \frac{9(3\sigma-2)}{2} \leq \frac{72}{49} < \frac{3}{2}$ (so in particular $\max(1,2\tau-2) = 1$ for $\tau \leq \tau_0$).  We set
    $$ \alpha_2 := \max(11 - 16 \sigma + \tau,0)$$
    and
    $$ \alpha_1 := \frac{\tau}{3} - \frac{2}{3} (7\sigma-5) - \alpha_2/6.$$
    Note that for $\sigma \leq 38/49$, one has
    $$ (5\tau/4 - (1+\sigma)) - (11 - 16 \sigma + \tau) \leq 15\sigma - 12 - \tau_0/4 \leq 0$$
    and hence
    $$ \alpha_2 \geq 5\tau/4 - (1+\sigma).$$
    As before, we conclude that the quantities $\alpha_1+\alpha_2/2 + 2-2\sigma$ and $-2\alpha_1 + \tau + 12 - 16 \sigma$ are both equal to $\frac{\tau}{3} + \frac{16-20\sigma}{3} + \frac{\alpha_2}{3}$, while $-\alpha_2 + 2\tau+4-8\sigma$ does not exceed this quantity.  Thus it suffices to show \eqref{bound-1}, \eqref{taut}, \eqref{tar} as before.

    For \eqref{bound-1} we argue as before to reduce to showing that
    $$ 11 - 16 \sigma + \tau_0 + 2 - 2 \sigma \leq 3 - 3\sigma$$
    which one can check to be true (with room to spare) for $\sigma \geq 16/21$.  For \eqref{taut}, we use \eqref{slope} as before to reduce to showing that
    $$ \frac{\tau_0}{3} + \frac{16-20\sigma}{3} + \frac{11 - 16 \sigma + \tau}{3} \leq 3 -3 \sigma$$
    and
    $$ \frac{\tau}{3} + \frac{16-20\sigma}{3} + \frac{0}{3} \leq 2 -2 \sigma.$$
    The first inequality is an identity, and the latter again reduces to  $\tau \geq 14\sigma-10$ which one can check follows from \eqref{tau-lower}.  For \eqref{tar} it suffices to show that
    $$ \frac{4\tau}{3} + \frac{46-68\sigma}{3}  + 1 - \frac{2(11 - 16 \sigma + \tau)}{3} \leq \frac{\tau}{\tau_0} (3-3\sigma)$$
    which by \eqref{slope} follows from
    $$ \frac{4\tau_0}{3} + \frac{46-68\sigma}{3}  + 1 - \frac{2(11 - 16 \sigma + \tau_0)}{3} \leq 3 - 3\sigma,$$
    but this is an identity.
\end{proof}

In \cite{bourgain_remarks_1995} the following zero density theorem was proven:

\begin{theorem}[Bourgain zero density theorem]\label{bourgain-zd}\cite[Proposition 3]{bourgain_remarks_1995}  Let $(k,\ell)$ be an exponent pair with $k < 1/5$, $\ell > 3/5$, and $15\ell + 20k > 13$.  Then, for any $\sigma > \frac{\ell+1}{2(k+1)}$, one has
    $$ \A(\sigma) \leq \frac{4k}{2(1+k)\sigma - 1 - \ell}$$
    assuming either that $k < \frac{11}{85}$, or that $\frac{11}{85} < k < \frac{1}{5}$ and $\sigma > \frac{144k-11\ell-11}{170k -22}$.
\end{theorem}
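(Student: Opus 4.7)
The plan is to apply \Cref{zero-large-cor3} with the choice
$$\tau_0 := \frac{3\bigl(2(1+k)\sigma - 1 - \ell\bigr)}{4k},$$
so that the conclusion $\A(\sigma) \leq 3/\tau_0$ matches the stated bound exactly; the hypothesis $\sigma > (\ell+1)/(2(k+1))$ is precisely the condition $\tau_0 > 0$. With this choice of $\tau_0$, two conditions remain to be verified: (i) the zeta large-value bound $\LV_\zeta(\sigma,\tau) \leq (3-3\sigma)\tau/\tau_0$ on $2 \leq \tau < 4\tau_0/3$, and (ii) the Montgomery conjecture bound $\LV(\sigma,\tau) \leq 2-2\sigma$ on $0 \leq \tau \leq \tau_0 + \sigma - 1$.

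For (i), the exponent pair input is fed through \Cref{exp-pair-mu} to give $\mu(\ell-k) \leq k$, and then \Cref{add-bound}(i) yields $\LV_\zeta(\sigma,\tau) = -\infty$ whenever $\tau < (\sigma + k - \ell)/k$. A short algebraic check shows this vanishing region already contains all of $[2, 4\tau_0/3)$ provided $\sigma \leq (1+k)/(1+2k)$; beyond that threshold one supplements with the twelfth moment bound \eqref{twelfth-bound}, and the explicit cutoff $\sigma > (144k - 11\ell - 11)/(170k - 22)$ appearing for $k \geq 11/85$ is presumably the numerical condition under which the supplementary bound still closes the inequality at $\tau = 4\tau_0/3$.

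For (ii), I would invoke a large-value theorem tailored to the exponent pair. The conditions $\ell > 3/5$, $k < 1/5$, and $15\ell + 20k > 13$ are the numerical inequalities that make one of the classical bounds---Huxley's \eqref{huxley-lvt}, Heath-Brown's \eqref{hb-opt}, or Jutila's \eqref{jutila-lvt} (with its own integer parameter distinct from the exponent pair)---extend the Montgomery range out past $\tau_0 + \sigma - 1$ in the regime $k < 11/85$. In the complementary regime $11/85 \leq k < 1/5$, these classical bounds fall just short, and one instead applies \Cref{bourgain-lvt}(ii), choosing the free parameters $\alpha_1, \alpha_2$ as linear functions of $\tau$ and $\sigma$ in the same spirit as the proof of \Cref{bourgain-density-improved} above.

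The main obstacle is the parameter bookkeeping. Each applicable large-value bound is piecewise-linear in $\tau$, and covering $[0, \tau_0 + \sigma - 1]$ requires stitching several such bounds together while simultaneously checking the endpoint inequalities at $\tau = 2\tau_0/3$, $\tau_0$, and $4\tau_0/3$ and respecting the slope constraint $\tau_0 \geq 3-3\sigma$ analogous to \eqref{slope}. The split at $k = 11/85$ marks a phase transition where the optimal bound switches from a Heath-Brown/Jutila type to a Bourgain type, and the supplementary hypothesis on $\sigma$ in the upper regime is the precise condition keeping all of the resulting inequalities simultaneously feasible. This is exactly the sort of polytope-feasibility problem that the ANTEDB's machine-assisted optimisation is designed to handle.
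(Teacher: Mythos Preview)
The paper does not actually prove this theorem; it is simply quoted from \cite[Proposition~3]{bourgain_remarks_1995} and then used as a black box in \Cref{bourgain-zero-density-optimized}. So there is no in-paper proof to compare against.

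That said, your sketch has a genuine gap. Your plan routes the exponent pair through $\mu(\ell-k)\le k$ to handle the $\LV_\zeta$ side, and then proposes to verify the Montgomery-range condition $\LV(\sigma,\tau)\le 2-2\sigma$ for $\tau\le\tau_0+\sigma-1$ using one of Huxley~\eqref{huxley-lvt}, Heath--Brown~\eqref{hb-opt}, Jutila~\eqref{jutila-lvt}, or \Cref{bourgain-lvt}. None of those large value theorems depends on the pair $(k,\ell)$. Consequently they cannot reproduce the precise denominator $2(1+k)\sigma-1-\ell$, nor explain the hypotheses $k<1/5$, $\ell>3/5$, $15\ell+20k>13$, nor the phase transition at $k=11/85$. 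Your part~(ii) is therefore not merely bookkeeping to be filled in: the required large value input is simply absent from the list you propose.

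What Bourgain's 1995 argument actually does is use the exponent pair inside a Hal\'asz--Montgomery-type inequality: one bounds the off-diagonal terms $\bigl|\sum_{n\in[N,2N]} n^{i(t-t')}\bigr|$ directly via the pair $(k,\ell)$, obtaining a large value theorem of the shape $\LV(\sigma,\tau)\le 2-2\sigma$ on a $\tau$-range that depends explicitly on $(k,\ell)$. It is this exponent-pair-dependent large value estimate (and Bourgain's refinement of it, which is the point of \cite{bourgain_remarks_1995}) that drives both the form of the bound and all of the side conditions. That ingredient is not among the large value theorems recorded in this paper, so to complete your argument you would need to import it separately rather than appeal to \eqref{huxley-lvt}--\eqref{jutila-lvt} or \Cref{bourgain-lvt}.
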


This result was combined with some known exponent pairs to obtain new zero density bounds on $\A(\sigma)$.
It was remarked in that paper that further exponent pairs could be used to improve the bounds.  This is easy to achieve computationally in our framework, and we record the results here:

\begin{theorem}[Optimised Bourgain zero density bound]\label{bourgain-zero-density-optimized} For fixed $\sigma$, one has
    \[
    \A(\sigma) \leq \begin{cases}
\dfrac{11}{12(4 \sigma - 3)} & \dfrac{3}{4} < \sigma \le \dfrac{14}{15},\\
\dfrac{391}{2493 \sigma - 2014} & \dfrac{14}{15} < \sigma \le \dfrac{2841}{3016},\\
\dfrac{22232}{163248 \sigma - 134765} & \dfrac{2841}{3016} < \sigma \le \dfrac{859}{908},\\
\dfrac{356}{2742 \sigma - 2279} & \dfrac{859}{908} < \sigma \le \dfrac{1625}{1692},\\
\dfrac{2609588}{20732766\sigma - 17313767} & \dfrac{1625}{1692} < \sigma \le \dfrac{3334585}{3447984},\\
\dfrac{75872}{9 (81024 \sigma - 69517)} & \dfrac{3334585}{3447984} < \sigma \le \dfrac{974605}{1005296},\\
\dfrac{288}{3616 \sigma - 3197} & \dfrac{974605}{1005296} < \sigma \le \dfrac{5857}{6032},\\
\dfrac{86152}{1447460 \sigma - 1311509} & \dfrac{5857}{6032} < \sigma < 1.
\end{cases}
    \]
\end{theorem}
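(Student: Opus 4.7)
The plan is to apply Theorem \ref{bourgain-zd} eight times, each time with a different exponent pair $(k,\ell)$ that is optimal over the corresponding subrange of $\sigma$. Theorem \ref{bourgain-zd} produces a bound of the form $\A(\sigma) \leq \frac{4k}{2(1+k)\sigma - 1 - \ell}$, and the goal is to minimise this quantity pointwise in $\sigma$ over all admissible pairs. Since the bound is a linear fractional function of $\sigma$ with coefficients depending on $(k,\ell)$, the optimal pair changes at finitely many breakpoints, and the seven breakpoints $\tfrac{14}{15},\tfrac{2841}{3016},\tfrac{859}{908},\tfrac{1625}{1692},\tfrac{3334585}{3447984},\tfrac{974605}{1005296},\tfrac{5857}{6032}$ in the statement correspond precisely to the $\sigma$-values at which this envelope transitions.

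First I would assemble a sufficiently rich family of exponent pairs: the classical pairs underlying Lemma \ref{exp-process} and Lemma \ref{D-process}, the refined pairs recorded in Table \ref{beta-table}, and crucially the four new pairs from Theorem \ref{new-exp-pair}, together with all pairs generated from them by iterating the $A$, $B$, $C$, $D$ processes. For each such pair I would record the Bourgain bound $\frac{4k}{2(1+k)\sigma - 1 - \ell}$ and form the lower envelope of these curves over the admissibility region carved out by $k<1/5$, $\ell>3/5$, $15\ell + 20k > 13$, $\sigma > \tfrac{\ell+1}{2(k+1)}$, and either $k < 11/85$ or $\sigma > \tfrac{144k-11\ell-11}{170k-22}$.

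As a sanity check of the tabulation, direct substitution shows that the fifth range $\tfrac{1625}{1692} < \sigma \leq \tfrac{3334585}{3447984}$ is obtained from the new exponent pair $(\tfrac{652397}{9713986},\tfrac{7599781}{9713986})$ of Theorem \ref{new-exp-pair}: unfolding gives $\tfrac{4k}{2(1+k)\sigma - 1 - \ell} = \tfrac{2609588}{20732766\sigma - 17313767}$, matching the table exactly. Likewise, the final range $\sigma > \tfrac{5857}{6032}$ corresponds to the pair $(\tfrac{10769}{351096},\tfrac{609317}{702192})$, which reduces to $\tfrac{86152}{1447460\sigma - 1311509}$. The intermediate rows arise similarly from suitable iterates of the $A$, $B$, $C$, $D$ processes applied to classical and new pairs; once a candidate pair is fixed, verifying that it lies in the admissibility polytope of Theorem \ref{bourgain-zd} throughout its claimed $\sigma$-range is a routine numerical check, as is confirming that the crossover $\sigma$-values with adjacent pieces match the stated endpoints.

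The main obstacle is the search over the infinite tree of derived exponent pairs generated by arbitrary compositions of $A$, $B$, $C$, $D$: identifying the true minimiser on each subrange is not tractable by hand, since small changes in the process combination shift both endpoints of the piecewise envelope. As explained in the paragraph following Table \ref{beta-table}, this is handled systematically by the \texttt{Hypothesis} machinery of the ANTEDB, which iteratively closes the starting set of pairs under the process operations until equilibrium, represents each resulting Bourgain bound as a rational function, and then computes the lower envelope by standard polytope operations. Once the machine returns the candidate pairs and breakpoints, the remainder of the proof is the arithmetic verification sketched above.
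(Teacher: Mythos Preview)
Your approach is essentially the same as the paper's: apply \Cref{bourgain-zd} with eight computer-selected exponent pairs and take the pointwise minimum of the resulting bounds. The paper differs only in that it explicitly lists all eight pairs---$\bigl(\tfrac{11}{85},\tfrac{59}{85}\bigr)$, $\bigl(\tfrac{391}{4595},\tfrac{3461}{4595}\bigr)$, $\bigl(\tfrac{2779}{38033},\tfrac{58699}{76066}\bigr)$, $\bigl(\tfrac{89}{1282},\tfrac{997}{1282}\bigr)$, $\bigl(\tfrac{652397}{9713986},\tfrac{7599781}{9713986}\bigr)$, $\bigl(\tfrac{2371}{43205},\tfrac{280013}{345640}\bigr)$, $\bigl(\tfrac{9}{217},\tfrac{1461}{1736}\bigr)$, $\bigl(\tfrac{10769}{351096},\tfrac{609317}{702192}\bigr)$---and frames the search as a convex optimisation over the convex hull $H$ of known exponent pairs intersected with the admissibility polygon $\mathcal S(\sigma)$, rather than a tree search over process compositions; since convex combinations of exponent pairs are again exponent pairs, this convex-hull formulation is what actually justifies using pairs like $\bigl(\tfrac{11}{85},\tfrac{59}{85}\bigr)$ that are not themselves direct outputs of the $A,B,C,D$ processes.
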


\begin{proof}
Let $\mathcal{S}(\sigma)$ denote the closure of the region
    \begin{align*}
    \Bigg\{(k, \ell) : 0 < k < \frac{1}{5}, \frac{3}{5} < \ell < 1, 15\ell + 20k > 13, \frac{\ell + 1}{2(k + 1)} < \sigma, \\
    k < \frac{11}{85} \text{ or } \left(k > \frac{11}{85} \text{ and } \frac{144k - 11\ell - 11}{170k - 22} < \sigma\right)\Bigg\}.
    \end{align*}
    One may verify that $\mathcal{S}(\sigma)$ is a convex polygon for all $3/4 < \sigma < 1$, and thus so is $\mathcal{S}(\sigma) \cap H$, where $H$ is the convex hull of exponent pairs. Thus
    \[
    \min_{(k, \ell) \in \mathcal{S}(\sigma) \cap H}\frac{4k}{2(1 + k)\sigma - 1 - \ell}
    \]
    is a convex optimisation problem for each $3/4 < \sigma < 1$.
    We take the following choices of $(k, \ell)$ (found with the aid of computer assistance).
    \[
    \left(\frac{11}{85}, \frac{59}{85}\right),
    \left(\frac{391}{4595}, \frac{3461}{4595}\right),
    \left(\frac{2779}{38033}, \frac{58699}{76066}\right),
    \left(\frac{89}{1282}, \frac{997}{1282}\right),
    \]
    \[
    \left(\frac{652397}{9713986}, \frac{7599781}{9713986}\right),
    \left(\frac{2371}{43205}, \frac{280013}{345640}\right),
    \left(\frac{9}{217}, \frac{1461}{1736}\right),
    \left(\frac{10769}{351096}, \frac{609317}{702192}\right).
    \]
    The desired result follows from applying \Cref{bourgain-zd} and taking a minimum over the implied bounds. Note that sharper bounds are possible close to $\sigma = 1$ by choosing other exponent pairs, however it turns out such results are superseded by other zero density estimates. 
\end{proof}

We record the current best known zero density estimates proven in the literature, or in this paper in \Cref{zero_density_estimates_table}.

\begin{table}[ht]
    \resizebox{\textwidth}{!}{
    \def\arraystretch{1}
    \centering
    \begin{tabular}{|c|c|c|}
    \hline
    $\A(\sigma)$ bound & Range & Reference\\
    \hline
    $\dfrac{3}{2 - \sigma}$ & $\dfrac{1}{2} \leq \sigma \le \dfrac{7}{10} = 0.7$ & Theorem \ref{thm:ingham_zero_density2}*\\
    \hline
    $\dfrac{15}{3+5\sigma}$ & $\dfrac{7}{10} \leq \sigma < \dfrac{19}{25} = 0.76$ & Theorem \ref{guth-maynard-density}\\
    \hline
    $\dfrac{9}{8\sigma - 2}$ & $\dfrac{19}{25} \leq \sigma < \dfrac{127}{167} = 0.7604\ldots$ & \cite{ivic_exponent_pairs}, \cite[Theorem 11.4]{ivic}\\
    \hline
    $\dfrac{15}{13\sigma - 3}$ & $\dfrac{127}{167} \leq \sigma < \dfrac{13}{17} = 0.7647\ldots$ & \cite{ivic_exponent_pairs}, \cite[Theorem 11.4]{ivic}\\
    \hline
    $\dfrac{6}{5\sigma - 1}$ & $\dfrac{13}{17} \leq \sigma < \dfrac{17}{22} = 0.7727\ldots$ & \cite{ivic_exponent_pairs}, \cite[Theorem 11.4]{ivic}*\\
    \hline
    $\dfrac{2}{9\sigma - 6}$ & $\dfrac{17}{22} \leq \sigma < \dfrac{41}{53} = 0.7735\ldots$ & \textbf{Theorem \ref{bourgain-density-improved}}\\
    \hline
    $\dfrac{9}{7\sigma - 1}$ & $\dfrac{41}{53} \leq \sigma < \dfrac{7}{9} = 0.7777\ldots$ & \cite{ivic_exponent_pairs}, \cite[Theorem 11.4]{ivic}\\
    \hline
    $\dfrac{9}{8(2\sigma-1)}$ & $\dfrac{7}{9} \le \sigma < \dfrac{1867}{2347} = 0.7954\ldots$ & \textbf{Theorem \ref{bourgain-density-improved}} \\
    \hline
    $\dfrac{3}{2\sigma}$ & $\dfrac{1867}{2347} \le \sigma < \dfrac{4}{5} = 0.8$ & \cite{bourgain_dirichlet_2000}* \\
    \hline
    $\dfrac{3}{2\sigma}$ & $\dfrac{4}{5} \le \sigma < \dfrac{7}{8} = 0.875$ & \cite{ivic_exponent_pairs}, \cite[Theorem 11.4]{ivic}* \\
    \hline
    $\dfrac{3}{10\sigma - 7}$ & $\dfrac{7}{8} \le \sigma < \dfrac{279}{314} = 0.8885\ldots$ & \cite{heathbrown_zero_1979}*\\
    \hline
    $\dfrac{24}{30\sigma - 11}$ & $\dfrac{279}{314} \le \sigma < \dfrac{155}{174} = 0.8908\ldots$ & \cite{chen_debruyne_vindas_density_2024}*  \\
    \hline
    $\dfrac{24}{30\sigma - 11}$& $\dfrac{155}{174} \le \sigma \le \dfrac{9}{10} = 0.9$ & \cite{ivic_exponent_pairs}, \cite[Theorem 11.2]{ivic}*\\
    \hline
    $\dfrac{3}{10\sigma - 7}$ & $\dfrac{9}{10} < \sigma \le \dfrac{31}{34} = 0.9117\ldots$ & \textbf{Theorem \ref{hb-density2}}\\
    \hline
    $\dfrac{11}{48\sigma - 36}$ & $\dfrac{31}{34} < \sigma < \dfrac{14}{15} = 0.9333\ldots$ & \textbf{Corollary \ref{bourgain-zero-density-optimized}} \\
    \hline
    $\dfrac{391}{2493\sigma - 2014}$ & $\dfrac{14}{15} \le \sigma < \dfrac{2841}{3016} = 0.9419\ldots$ & \textbf{Corollary \ref{bourgain-zero-density-optimized}} \\
    \hline
    $\dfrac{22232}{163248\sigma - 134765}$ & $\dfrac{2841}{3016} \le \sigma < \dfrac{859}{908} = 0.9460\ldots$ & \textbf{Corollary \ref{bourgain-zero-density-optimized}} \\
    \hline
     $\dfrac{356}{2742\sigma - 2279}$ & $\dfrac{859}{908} \le \sigma < \dfrac{23}{24} = 0.9583\ldots$ & \textbf{Corollary \ref{bourgain-zero-density-optimized}} \\
    \hline
    $\dfrac{3}{24\sigma - 20}$ & $\dfrac{23}{24} \leq \sigma < \dfrac{2211487}{2274732} = 0.9721\ldots$ & \cite[Theorem 1]{pintz_density_2023}* \\
    \hline
    $\dfrac{86152}{1447460\sigma - 1311509}$ & $\dfrac{2211487}{2274732} \le \sigma < \dfrac{39}{40} = 0.975$ & \textbf{Corollary \ref{bourgain-zero-density-optimized}} \\
    \hline
    $\dfrac{2}{15 \sigma - 12}$ & $\dfrac{39}{40} \leq \sigma < \dfrac{41}{42} = 0.9761\ldots$ & \cite[Theorem 1]{pintz_density_2023}* \\
    \hline
    $\dfrac{3}{40 \sigma - 35}$ & $\dfrac{41}{42} \leq \sigma < \dfrac{59}{60} = 0.9833\ldots$ & \cite[Theorem 1]{pintz_density_2023}* \\
    \hline
    $\dfrac{3}{n(1 - 2(n - 1)(1 - \sigma))}$ & \begin{tabular}{@{}c@{}}$1 - \dfrac{1}{2n(n - 1)} \le \sigma < 1 - \dfrac{1}{2n(n + 1)}$\\(for integer $n \ge 6$)\end{tabular} & \cite[Theorem 1]{pintz_density_2023}* \\
    \hline
    \end{tabular}
    }
    \caption{Current best upper bound on $\A(\sigma)$.  The new results in this paper are marked in bold. Results labelled with an asterisk also appear in \cite[Table 2]{trudgian-yang}.}
    \label{zero_density_estimates_table}
    \end{table}

\begin{figure}
\centering
\includegraphics[width=10cm]{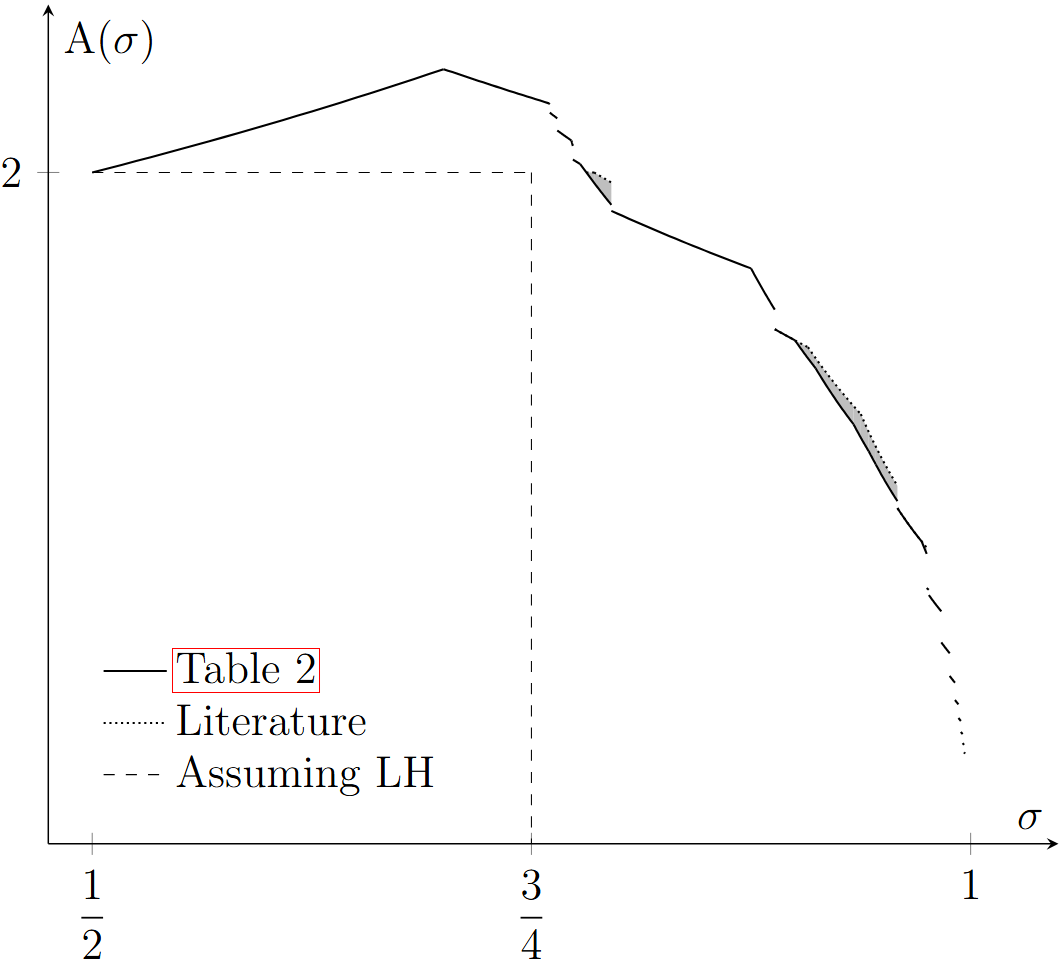}
\caption{The bounds in \Cref{zero_density_estimates_table}, compared against the existing literature bounds on $\A(\sigma)$, with differences shaded grey.}
\label{fig:zero_density_estimate}
\end{figure}

\subsection{A heuristic for zero density estimates}\label{heuristic-sec}

We can now state a rough heuristic as to what zero density estimates to expect from a given large value theorem:

\begin{heuristic}[Predicting a zero density estimate from a large value theorem]\label{lv-heuristic} Suppose that $1/2 \leq \sigma \leq 1$ and $\tau_0 \geq 1$ are such that one can prove $\LV(\sigma, \tau_0) \leq 3-3\sigma$ (i.e., the Montgomery conjecture holds here with a multiplicative loss of $3/2$).  Then in principle, one can hope to prove $\A(\sigma) \leq 3/\tau_0$.  Conversely, if one cannot prove $\LV(\sigma, \tau_0) \leq 3-3\sigma$, then the bound $\A(\sigma) \leq 3/\tau_0$ is likely out of reach.
\end{heuristic}

We justify this heuristic as follows, though we stress that the arguments that follow are not fully rigorous.  In the first part, we simply apply Corollary \ref{zero-large-cor2}.  In practice, the \eqref{lvo} is often more delicate than \eqref{lvoz} and ends up being the limiting factor for the bounds; furthermore, within \eqref{lvo}, it is the right endpoint $\tau=\tau_0$ of the range $2\tau_0/3 \leq \tau \leq \tau_0$ that ends up being the bottleneck; but this is precisely the claimed criterion $\LV(\sigma, \tau_0) \leq 3-3\sigma$.  We remark that in some cases (particularly for $\sigma$ close to one), the estimate \eqref{lvoz} ends up being more of the bottleneck than \eqref{lvo}, and so one should view $3/\tau_0$ here as a theoretical upper limit of methods rather than as a guaranteed bound.  (In particular, the need to  establish the bound $\LV_\zeta(\sigma, \frac{4}{3}\tau_0-\eps) < 4-4\sigma$ for $\eps>0$ small can sometimes be a more limiting factor.)

Conversely, suppose that
\begin{equation}\label{lst3}
    \LV(\sigma,\tau_0) > 3-3\sigma,
\end{equation}
but that one still wants to prove the bound $\A(\sigma) \leq 3/\tau_0$. Heuristically, Theorem \ref{zero-dens_implies_large} suggests that in order to do this, it is necessary to establish the bound $\LV_\zeta(\sigma,\tau)/\tau \leq \frac{3}{\tau_0}(1-\sigma)$ for all $\tau \geq 2$.  In particular, one should show that
$$ \LV_\zeta(\sigma,2\tau_0) \leq 6-6\sigma.$$
Let us consider the various options one has to do this.  There are ways to control zeta large values that do not apply to general large value estimates, such as moment estimates of the zeta-function, exponent pairs, or control of $\beta$ and $\mu$.  However, at our current level of understanding, these techniques only control $\LV_\zeta(\sigma,\tau)$ for relatively small values of $\tau$, and in practice $2\tau_0$ is too large for these methods to apply; this exponent also tends to be too large for direct application of standard large value theorems to be useful.  Hence, the most viable option in practice is raising to a power (Lemma \ref{power-lemma}), using
$$ \LV_\zeta(\sigma,2\tau_0) \leq k \LV_\zeta(\sigma,2\tau_0/k)$$
for some natural number $k \geq 2$.  However, the most natural choice $k=2$ is blocked due to our hypothesis \eqref{lst3}, while in practice the $k \geq 3$ choice is blocked because of Lemma \ref{lv-lower}.  Hence it appears heuristically quite difficult to establish $\A(\sigma) \leq 3/\tau_0$ with current technology, in the event that \eqref{lst3} occurs.

In \Cref{zero_density_heuristic} we show how some of the large value theorems proven above are in agreement with \Cref{lv-heuristic} for at least some ranges of $\sigma$.  Further examples can be found in the ANTEDB.

\begin{table}[ht]
    \resizebox{\textwidth}{!}{
    \def\arraystretch{1.3}
    \centering
    \begin{tabular}{|c|c|c|c|}
    \hline
    Large value theorem & Predicted $\tau_0$ & Predicted $\A(\sigma)$ & Zero density theorem\\
    \hline
    Theorem \ref{l2-mvt} & $2-\sigma$ & $\frac{3}{2-\sigma}$ & Theorem \ref{thm:ingham_zero_density2}\\
    \hline
    \eqref{huxley-lvt} & $3\sigma-1$ & $\frac{3}{3\sigma-1}$ & Theorem \ref{huxley-bound} \\
    \hline
    \eqref{hb-opt} & $10\sigma-7$ & $\frac{3}{10\sigma-7}$ & Theorems \ref{hb-density2} \\
    \hline
    \eqref{guth-maynard-lvt} & $\frac{5\sigma-3}{3}$ & $\frac{15}{5\sigma-3}$ & Theorem \ref{guth-maynard-density} \\
    \hline
    \end{tabular}}
    \caption{Examples of large value theorems, the values of $\tau_0$ and $\A(\sigma)$ they suggest, and rigorous zero density theorems that attain the predicted value for at least some ranges of $\sigma$.}
    \label{zero_density_heuristic}
\end{table}

\section{Additive energy}

Using the concept of additive energy from \Cref{energy-def}, we can define additive energy versions of the exponents $\LV(\sigma,\tau)$, $\LV_\zeta(\sigma,\tau)$, and $\A(\sigma)$:

\begin{definition}[Large value exponent for energy]\label{lve-def} Let $1/2 \leq \sigma \leq 1$ and $\tau \geq 0$ be fixed. We define $\LV^*(\sigma,\tau)$ to be the least fixed quantity for which the following claim is true: whenever $(N,T,V,(a_n)_{n \in [N,2N]},J,W)$ is a large value pattern with $N>1$ unbounded, $T = N^{\tau+o(1)}$, and $V = N^{\sigma+o(1)}$, then
    $$ E_1(W) \ll N^{\LV^*(\sigma,\tau)+o(1)}.$$
    We define $\LV^*_\zeta(\sigma,\tau)$ similarly, but where $(N,T,V,(a_n)_{n \in [N,2N]},J,W)$ is now required to be a zeta large value pattern.
\end{definition}

\begin{definition}[Zero density exponents for energy]\label{zeroe-def}  For $\sigma \in \R$ and $T>0$, let $N^*(\sigma,T)$ denote the additive energy of the imaginary parts of zeroes $\rho$ of the Riemann zeta-function with $\mathrm{Re}(\rho) \geq \sigma$ and $|\mathrm{Im}(\rho)| \leq T$.

If $1/2 \leq \sigma < 1$ is fixed, we define the zero density exponent $\A^*(\sigma) \in [-\infty,\infty)$ to be the infimum of all (fixed) exponents $A^*$ for which one has
        $$ N^*(\sigma-\delta,T) \ll T^{A^* (1-\sigma)+o(1)}$$
    whenever $T$ is unbounded and $\delta>0$ is infinitesimal.
\end{definition}

\begin{remark}  It is theoretically possible (though not believed to be likely) that two zeroes of $\zeta$ to the right of the critical line have the same imaginary part, in which case the set of imaginary parts may contain multiplicity.  However, by the Riemann--von Mangoldt formula, this multiplicity can only affect the additive energy by a factor of $O(\log^{O(1)} T)$ at most, and thus has no impact on the exponent $\A^*(\sigma)$.
\end{remark}

From the easy bounds
$$ |W|^2 \ll E_1(W) \ll |W|^3$$
we see that
\begin{equation}\label{lve-triv}
    2\LV(\sigma,\tau) \leq \LV^*(\sigma,\tau) \leq 3\LV(\sigma,\tau)
\end{equation}
and
$$ 2\LV_\zeta(\sigma,\tau) \leq \LV^*_\zeta(\sigma,\tau) \leq 3\LV_\zeta(\sigma,\tau)$$
and also
$$ 2\A(\sigma) \leq \A^*(\sigma) \leq 3\A(\sigma).$$
We also trivially have $\LV^*_\zeta(\sigma,\tau) \leq \LV^*(\sigma,\tau)$.

In order to record the interactions between $\LV^*(\sigma,\tau)$, $\LV^*_\zeta(\sigma)$ and large value theorems, we introduce the following regions.

\begin{definition}[Large value energy region]\label{lv-edef} The \emph{large value energy region} $\Energy \subset \R^5$ is defined to be the set of all fixed tuples $(\sigma,\tau,\rho,\rho^*,s)$ with $1/2 \leq \sigma \leq 1$, $\tau, \rho, \rho' \geq 0$, such that there exists a large value pattern $(N,T,V,(a_n)_{n \in [N,2N]},J,W)$ with $N>1$ unbounded, $V = N^{\sigma+o(1)}$, $T = N^{\tau+o(1)}$, $V = N^{\sigma+o(1)}$, $|W| = N^{\rho+o(1)}$, $E_1(W) = N^{\rho^*+o(1)}$ and $S(N,W) = N^{s+o(1)}$.

    We define the \emph{zeta large value energy region} $\Energy_\zeta \subset \R^5$ similarly, but where now $(N,T,V,(a_n)_{n \in [N,2N]},J,W)$ is required to be a zeta large value pattern.
\end{definition}

By the usual arguments, one can define this region without using asymptotic notation:

\begin{lemma}[Non-asymptotic form of large value energy region]\label{lve-asymp} Let $1/2 \leq \sigma \leq 1$, $\tau \geq 0$,  $\rho, \rho^* \geq 0$, and $s \in \R$ be fixed.  Then the following are equivalent:
    \begin{itemize}
    \item[(i)] $(\sigma,\tau,\rho,\rho^*,s) \in \Energy$.
    \item[(ii)] For every $\eps>0$ and $C > 0$, there exists a large value pattern $$(N,T,V,(a_n)_{n \in [N,2N]},J,W)$$ with $N \geq C$, $N^{\tau-\delta} \leq T \leq N^{\tau+\delta}$, $N^{\sigma-\delta} \leq V \leq N^{\sigma+\delta}$,
    $N^{\rho-\eps} \leq |W| \leq N^{\rho+\eps}$,
    $N^{\rho^*-\eps} \leq E_1(W) \leq N^{\rho^*+\eps}$, and
    $N^{s-\eps} \leq S(N, W) \leq N^{s+\eps}$.
    \end{itemize}
    A similar statement is true with $\Energy$ replaced by $\Energy_\zeta$, and with $(N,T,V,(a_n)_{n \in [N,2N]},J,W)$ required to be a zeta large value pattern.
\end{lemma}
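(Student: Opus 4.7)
The plan is to mimic the proof of Lemma \ref{beta-asymp}, replacing the single asymptotic bound on an exponential sum with the five simultaneous asymptotic constraints that define $\Energy$.

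For the direction (i) $\Rightarrow$ (ii), I would simply unpack Definition \ref{lv-edef}. The membership $(\sigma, \tau, \rho, \rho^*, s) \in \Energy$ provides a variable large value pattern $(N, T, V, (a_n)_{n \in [N,2N]}, J, W)$ with $N$ unbounded and the five asymptotic equalities $T = N^{\tau + o(1)}$, $V = N^{\sigma + o(1)}$, $|W| = N^{\rho + o(1)}$, $E_1(W) = N^{\rho^* + o(1)}$, $S(N, W) = N^{s + o(1)}$. Given fixed $\eps, C > 0$, the unboundedness of $N$ gives $N \geq C$ for all sufficiently large $\ii$ in the ambient sequence, and (taking the role of $\delta$ to be $\eps$) each of the five asymptotic equalities forces the relevant quantity into its $\pm \eps$ exponent window for all sufficiently large $\ii$. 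Selecting any such index extracts a fixed large value pattern with the required sandwich bounds.

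For the converse (ii) $\Rightarrow$ (i), I would proceed by direct construction rather than contradiction. For each fixed natural number $n$, apply (ii) with parameters $\eps = 1/n$ and $C = n$ to obtain a fixed large value pattern
\[
\mathcal{P}_n = (N_n, T_n, V_n, (a_{n,m})_{m \in [N_n, 2N_n]}, J_n, W_n)
\]
with $N_n \geq n$ and with $T_n, V_n, |W_n|, E_1(W_n), S(N_n, W_n)$ all trapped within the $N_n^{\pm 1/n}$ window of the exponents $\tau, \sigma, \rho, \rho^*, s$ respectively. Taking $\ii = n$ as the ambient index and declaring the variable pattern $(N, T, V, (a_n), J, W) \coloneqq \mathcal{P}_\ii$, we obtain a variable large value pattern in the sense of Definition \ref{large-pattern-def}: $N = N_\ii$ is unbounded because $N_\ii \geq \ii$, and the sandwich bound $N^{\tau - 1/\ii} \leq T \leq N^{\tau + 1/\ii}$ translates directly into $T = N^{\tau + o(1)}$, and similarly for the remaining four quantities. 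This verifies $(\sigma, \tau, \rho, \rho^*, s) \in \Energy$ via Definition \ref{lv-edef}.

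I expect the main difficulty to be purely notational rather than substantive: one must confirm that the assembled tuple $\mathcal{P}_\ii$ really is a \emph{variable} large value pattern, meaning that the $1$-boundedness of $(a_n)_{n \in [N,2N]}$, the interval property of $J$, and the $1$-separation of $W$ inside $J$ all hold for every $\ii$ in the ambient sequence. These properties are inherited pointwise from each $\mathcal{P}_n$ and so present no obstruction. The zeta large value energy region $\Energy_\zeta$ is handled by exactly the same argument, since the constraints $J = [T, 2T]$ and $a_n = 1_I(n)$ defining a zeta large value pattern are preserved by the assembly $\mathcal{P}_\ii$ when each $\mathcal{P}_n$ is itself a zeta large value pattern.
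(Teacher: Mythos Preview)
Your proposal is correct and is precisely what the paper intends: the paper itself does not write out a proof, introducing the lemma only with the phrase ``By the usual arguments,'' which points back to the scheme of Lemma~\ref{beta-asymp}. Your two directions --- extracting a fixed pattern at a large enough index for (i)$\Rightarrow$(ii), and assembling the $\eps=1/n$, $C=n$ witnesses into a variable pattern for (ii)$\Rightarrow$(i) --- are exactly that scheme, and your remark that the undeclared $\delta$ in the statement should be read as $\eps$ correctly identifies a typo in the paper.
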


The quantities $\LV^*(\sigma,\tau)$, $\LV^*_\zeta(\sigma,\tau)$ can now be expressed in terms of these regions by the formulae
\begin{equation}\label{lvs-energy}
     \LV^*(\sigma,\tau) = \sup \{ \rho^*: (\sigma,\tau,\rho,\rho^*,s) \in \Energy\}
\end{equation}
and
\begin{equation}\label{lvsz-energy}
     \LV^*_\zeta(\sigma,\tau) = \sup \{ \rho^*: (\sigma,\tau,\rho,\rho^*,s) \in \Energy_\zeta\}.
\end{equation}

We have the following analogue of \Cref{zero-from-large}:

\begin{lemma}[Zero density energy from large values energy]\label{zeroe-from-large}  Let $1/2 < \sigma < 1$.  Then
    $$ \A^*(\sigma)(1-\sigma) \leq \max\left( \sup_{\tau \geq 1} \LV^*_\zeta(\sigma,\tau)/\tau, \limsup_{\tau \to \infty} \LV^*(\sigma,\tau)/\tau \right).$$
    \end{lemma}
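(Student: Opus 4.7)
The plan is to adapt the proof of \Cref{zero-from-large} by tracking the additive energy $E_1(W)$ of the ordinate set $W$ of zeros, rather than its cardinality $|W|$. The quasi-triangle inequality \eqref{energy-triangle} plays the role that the ordinary triangle inequality plays for counting: any decomposition $W = W_1 \cup \cdots \cup W_J$ satisfies $E_1(W) \ll J^4 \max_j E_1(W_j)$, so all the $T^{o(1)}$ pigeonholing steps from \Cref{zero-from-large} carry over with only $T^{o(1)}$ loss.

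After a standard dyadic reduction I would fix a $1$-separated set $W = \{t_r\}$ of ordinates of zeros $\rho_r = \sigma_r + i t_r$ with $\sigma_r \geq \sigma - o(1)$ and $t_r \in [T, 2T]$, and try to show $E_1(W) \ll T^{B(1-\sigma) + O(\eps) + o(1)}$ for any fixed $\eps > 0$, where $B$ is the right-hand side. Following \Cref{zero-from-large}, the approximation $\sum_{n \leq T} n^{-\rho_r} \ll T^{-1/2}$ lets me split $W = W_I \cup W_{II}$ into a Type I part on which $|\sum_{T^{\delta_1} \leq n \leq T} n^{-\rho_r}| \gg T^{-\delta_2}$ and a Type II part on which $\sum_{n \leq T^{\delta_1}} n^{-\rho_r} \ll T^{-\delta_2}$, and by \eqref{energy-triangle} it suffices to bound $E_1(W_I)$ and $E_1(W_{II})$ separately.

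For $W_I$, dyadic decomposition in the length scale $N$ (with $T^{\delta_1} \ll N \ll T$) and smooth Fourier expansion in $\log n$ produces, after a $T^{o(1)}$-refinement of $W_I$ into a $1$-separated set $W'_I$, a zeta large value pattern at scale $N$. As in \Cref{zero-from-large}, the range $N \asymp T$ is ruled out by Euler--Maclaurin, while the range $T^{1/2} \ll N \ll o(T)$ is dualised via the van der Corput $B$-process (or the approximate functional equation) into a sum of length $\asymp T/N$, reducing to the case $N \ll T^{1/2}$. In every surviving case, writing $T = N^{\tau + o(1)}$ (so $\tau \geq 1$, with $\tau \geq 2$ after dualising), \Cref{lve-def} gives
$$ E_1(W_I) \ll T^{o(1)} E_1(W'_I) \ll N^{\LV^*_\zeta(\sigma,\tau) + o(1)} = T^{\LV^*_\zeta(\sigma,\tau)/\tau + o(1)} \ll T^{B(1-\sigma) + o(1)}. $$
For $W_{II}$, mollifying by $\sum_{m \leq T^{\delta_2/2}} m^{-\rho_r}$ and using divisor bounds produces a mollified sum $1 + \sum_{T^{\delta_2/2} \leq n \leq T^{\delta_1 + \delta_2/2}} a_n n^{-\rho_r} = o(1)$ with $a_n \ll T^{o(1)}$, from which dyadic pigeonholing and Fourier expansion yield a general large value pattern at some scale $N$ with $T = N^{\tau + o(1)}$ and $\tau \in [1/(\delta_1+\delta_2/2),\, 2/\delta_2]$, arbitrarily large once $\delta_1, \delta_2$ are small enough. \Cref{lve-def} then gives $E_1(W_{II}) \ll T^{\LV^*(\sigma,\tau)/\tau + o(1)} \leq T^{B(1-\sigma) + \eps + o(1)}$. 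Combining via \eqref{energy-triangle} and sending $\eps \to 0$ completes the proof.

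The main obstacle is purely bookkeeping: one must check that the various $T^{o(1)}$-refinements of $W$ (needed to restore $1$-separation after Fourier expansion, to extract a single dyadic scale, and so on) only cost $T^{o(1)}$ multiplicative factors in $E_1$. This is automatic since passing to a subset only decreases $E_1$, and reassembling $T^{o(1)}$ subsets costs at most $T^{o(1)}$ via \eqref{energy-triangle}. Once this bookkeeping is in place, the argument is a direct transcription of \Cref{zero-from-large} with \Cref{lv-def} replaced by \Cref{lve-def}.
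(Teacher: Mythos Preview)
Your proposal is correct and follows essentially the same approach as the paper: adapt the proof of \Cref{zero-from-large} by replacing cardinality with additive energy throughout, using the quasi-triangle inequality \eqref{energy-triangle} and the comparability \eqref{energy-asymp} to absorb the $T^{o(1)}$ pigeonholing and ordinate-shift steps. One simplification you missed: because the lemma's statement takes $\sup_{\tau \geq 1}$ (not $\sup_{\tau \geq 2}$ as in \Cref{zero-from-large}), the paper's proof skips the Euler--Maclaurin and $B$-process reductions in the Type~I case entirely, simply taking $T = N^{\tau+o(1)}$ with $1 \leq \tau \leq 1/\delta_1$ after dyadic decomposition. Your more elaborate Type~I argument would in fact yield the sharper bound with $\sup_{\tau \geq 2}$, but this extra work is not needed for the result as stated.
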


    \begin{proof}
    Write the right-hand side as $B$, then $B \geq 0$ (from \eqref{lve-triv}) and we have
    \begin{equation}\label{lvze-bound}
        \LV^*_\zeta(\sigma,\tau) \leq B \tau
    \end{equation}
    for all $\tau \geq 1$, and
    \begin{equation}\label{lve-bound}
        \LV^*(\sigma,\tau) \leq (B+\eps) \tau
    \end{equation}
    whenever $\eps>0$ and $\tau$ is sufficiently large depending on $\eps$ (and $\sigma$).  It would suffice to show, for any $\eps>0$, that $N^*(\sigma,T) \ll T^{B+O(\eps)+o(1)}$ for unbounded $T$.

    By dyadic decomposition, it suffices to show for unbounded $T$ that the additive energy of imaginary parts of zeroes in $[T,2T]$ is $\ll T^{B+O(\eps)+o(1)}$.  As in the proof of Lemma \ref{zero-from-large}, we can assume the imaginary parts are $1$-separated (here we take advantage of the triangle inequality \eqref{energy-triangle} for additive energy).

    Suppose there is a zero $\sigma'+i t$ of this form.  Then by  \cite[Theorem 1.8]{ivic}, one has
    $$ \sum_{n \leq T} \frac{1}{n^{\sigma'+it}} \ll T^{-1}.$$
    Let $0 < \delta_1 < \eps$ be a small fixed quantity to be chosen later, and let $0 < \delta_2 < \delta_1$ be sufficiently small depending on $\delta_1,\delta_2$.  By the triangle inequality, and refining the sequence $t'$ by a factor of at most $2$, we either have
    $$ \bigg|\sum_{T^{\delta_1} \leq n \leq T} \frac{1}{n^{\sigma'+it}} \bigg| \gg T^{-\delta_2}$$
    for all zeroes, or \eqref{td} for all zeroes.

    Suppose we are in the former (``Type I'') case, we can dyadically partition and conclude from the pigeonhole principle that
    $$ \bigg| \sum_{n \in I} \frac{1}{n^{\sigma'+it}} \bigg| \gg T^{-\delta_2-o(1)}$$
    for some interval $I$ in some $[N,2N]$ with $T^{\delta_1} \ll N \ll T$, with at most $O(\log T)$ different choices for $I$.  Performing a Fourier expansion of $n^{\sigma'}$ in $\log n$ and using the triangle inequality one can then deduce that
    $$ \bigg| \sum_{n \in I} \frac{1}{n^{it'}} \bigg| \gg N^{\sigma'} T^{-\delta_2-o(1)}$$
    for some $t' = t + O(T^{o(1)})$; refining the $t$ by a factor of $T^{o(1)}$ if necessary, we may assume that the $t'$ are $1$-separated and that the interval $I$ is independent of $t'$, and by passing to a subsequence we may assume that $T = N^{\tau+o(1)}$ for some $1 \leq \tau \leq 1/\delta_1$, then
    $$ \bigg| \sum_{n \in I} \frac{1}{n^{it'}} \bigg| \gg N^{\sigma-\delta_2/\delta_1+o(1)}$$
    for all $t'$.  If we let $\Sigma'$ denote the set of such $t'$, then by Definition \ref{lve-def} we then have (for $\delta_2$ small enough) we have
    $$ E_1(\Sigma') \ll N^{\LV^*_\zeta(\sigma,\tau) + \eps + o(1)} \ll T^{\LV^*_\zeta(\sigma,\tau)/\tau + \eps + o(1)}.$$
    Let $\Sigma$ denote the set of imaginary parts of zeroes under consideration (after the passage to subsequences indicated previously).  By construction, each element of $\Sigma$ lies within $T^{o(1)}$ of an element of $\Sigma'$, and by the Riemann--von Mangoldt formula there are at most $T^{o(1)}$ elements of $\Sigma$ in any unit interval.  Standard additive energy calculations (e.g., using \eqref{energy-asymp}) then gives the bound
    $$ E_1(\Sigma) \ll T^{\LV^*_\zeta(\sigma,\tau)/\tau + \eps + o(1)}.$$
    and the claim follows in this case from \eqref{lvze-bound}.

    The Type II case similarly follows from \eqref{lve-bound} exactly as in the proof of Lemma \ref{zero-from-large}.
\end{proof}

By repeating the proof of \Cref{zero-large-cor-0}, we obtain

\begin{corollary}\label{zeroe-large-cor-0} Let $1/2 < \sigma < 1$ and $\tau_0 > 0$ be fixed.  Then
    $$ \A^*(\sigma)(1-\sigma) \leq \max \left(\sup_{2 \leq \tau < \tau_0} \LV^*_\zeta(\sigma,\tau)/\tau, \sup_{\tau_0 \leq \tau \leq 2\tau_0} \LV^*(\sigma,\tau)/\tau\right).$$
    \end{corollary}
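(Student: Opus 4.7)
The plan is to mirror the proof of \Cref{zero-large-cor-0} step by step, substituting the energy exponents $\LV^*$, $\LV^*_\zeta$, $\A^*$ for their non-starred counterparts and invoking \Cref{zeroe-from-large} in place of \Cref{zero-from-large} at the end. The two auxiliary tools used there --- \Cref{power-lemma} and \Cref{add-bound}(iii) --- both admit energy analogues that I would establish en route.

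Setting $B$ to be the right-hand side of the claimed inequality, the hypothesis immediately gives $\LV^*(\sigma,\tau) \leq B\tau$ on $[\tau_0,2\tau_0]$ and $\LV^*_\zeta(\sigma,\tau) \leq B\tau$ on $[2,\tau_0)$. The first step is an energy version of \Cref{power-lemma}, namely $\LV^*(\sigma,k\tau) \leq k\,\LV^*(\sigma,\tau)$ for any natural $k$. This follows from the same convolution argument: raising the Dirichlet polynomial to the $k$-th power rescales $N \mapsto N^k$ and $V \mapsto V^k$, but leaves the set $W$ --- and hence its additive energy $E_1(W)$ --- unchanged, modulo the divisor losses absorbed into $N^{o(1)}$. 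Iterating and covering $[\tau_0,\infty)$ by $\bigcup_k [k\tau_0, 2k\tau_0]$ as in the original proof gives $\LV^*(\sigma,\tau) \leq B\tau$ for all $\tau \geq \tau_0$, and hence $\limsup_{\tau \to \infty} \LV^*(\sigma,\tau)/\tau \leq B$.

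Combining with the trivial bound $\LV^*_\zeta \leq \LV^*$ and the hypothesis on $[2,\tau_0)$ then yields $\LV^*_\zeta(\sigma,\tau) \leq B\tau$ for all $\tau \geq 2$. To extend to the full range $\tau \geq 1$ required by \Cref{zeroe-from-large}, I would next show an energy analogue of the reflection identity \Cref{add-bound}(iii). Its proof via the approximate functional equation transfers each $t \in W$ to a new $t' = t + O(N^{o(1)})$, and such small perturbations alter $E_1$ only by a factor of $N^{o(1)}$, as is apparent from the Gowers-norm formulation \eqref{energy-asymp}. Consequently the reflection identity transfers to $\LV^*_\zeta$, supplying $\LV^*_\zeta(\sigma,\tau) \leq B\tau$ on $(1,2]$ as well; the conclusion then follows from \Cref{zeroe-from-large}.

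The main obstacle I anticipate is the careful verification of the energy reflection lemma: the chain of Fourier expansions, dyadic pigeonholing, and refinements of the $1$-separated set $W$ in the proof of \Cref{add-bound}(iii) each introduce perturbations whose effect on $E_1$ must be tracked, and one must check that the dyadic refinement producing the auxiliary sets does not destroy too much energy. These are routine consequences of the quasi-triangle inequality \eqref{energy-triangle} and the near-invariance of $E_1$ under $N^{o(1)}$ perturbations, but they constitute the main bookkeeping that a full write-up would have to carry through.
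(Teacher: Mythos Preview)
Your plan matches the paper's approach exactly: the paper's entire proof is the single line ``by repeating the proof of \Cref{zero-large-cor-0}'', so your step-by-step transcription is in fact more detailed than what the paper supplies. The energy power lemma goes through as you say (and is essentially \Cref{power-energy} in the paper); the set $W$ is only refined into $O(1)$ pieces by the pigeonholing on the $k$ dyadic subintervals of $[N^k,2^kN^k]$, and the quasi-triangle inequality \eqref{energy-triangle} absorbs that.

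The one place your justification is thinner than you suggest is the energy reflection. You write that the proof of \Cref{add-bound}(iii) ``transfers each $t\in W$ to a new $t'=t+O(N^{o(1)})$'', but that is only true through the approximate functional equation step. The subsequent Fourier expansion of $(M/m)^{1/2}1_{J_r}(m)$ involves a \emph{sharp} cutoff $1_{J_r}$ (with $J_r$ depending on $t'$), so the Fourier weight decays only like $1/(1+|t_1-t'|)$, and the set $W''$ is then extracted from the integrand by an $\ell^1$-type pigeonholing that relates $|W''|$ to $|W|$ rather than $E_1(W'')$ to $E_1(W)$. So $W''$ is genuinely not a small perturbation or $N^{o(1)}$-fold refinement of $W'$, and the quasi-triangle inequality alone does not close the gap.

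There is a cleaner way to avoid this altogether. The only reason reflection is needed is that \Cref{zeroe-from-large} is stated with $\sup_{\tau\geq 1}$ rather than $\sup_{\tau\geq 2}$. But its proof can be upgraded to $\tau\geq 2$ by importing the $B$-process manoeuvre from the Type~I case of \Cref{zero-from-large}: there one starts from a \emph{smooth} partition of unity $\psi(n/N)$, so after the $B$-process the reflected amplitude $\psi(2\pi t_r/mN)(2\pi t_r/mN)^{1-\sigma_r}$ is smooth in $m$, its Fourier expansion in $\log m$ decays rapidly, and one lands on $t'_r=t_r+O(T^{o(1)})$ with a sum over a \emph{fixed} interval $[M/10,10M]$. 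That perturbation genuinely preserves $E_1$ up to $T^{o(1)}$ by \eqref{energy-asymp}, giving $\tau\geq 2$ in \Cref{zeroe-from-large} and making the reflection step redundant --- exactly as it already is (if you look closely) in the proof of \Cref{zero-large-cor-0}.
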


Thus, to obtain upper bounds on $\A^*(\sigma)$, it suffices to control the quantities $\LV^*_\zeta(\sigma,\tau)$, $\LV^*(\sigma,\tau)$, and by \eqref{lvs-energy}, \eqref{lvsz-energy}, it suffices to control the large value energy regions $\Energy, \Energy_\zeta$.

In analogy with \Cref{power-lemma}, we would expect the region $\Energy$ to be closed under the operation $(\sigma,\tau,\rho,\rho^*,s) \mapsto (\sigma,\tau/k, \rho/k, \rho^*/k, s/k)$.  Due to some technical issues (having to do with the fact that the additive energy or double zeta sum may unexpectedly drop when passing to a subsequence), we were not able to prove this claim as stated; however we have the following serviceable substitute.

\begin{lemma}[Raising to a power]\label{power-energy}  If $(\sigma,\tau,\rho,\rho^*,s) \in \Energy$, and $k \geq 1$, then amongst all tuples $(\sigma,\tau/k,\rho',(\rho')^*,s') \in \Energy$ with $\rho' \leq \rho/k$, $(\rho')^* \leq \rho^*/k$, and $s' \leq s/k$, there exists a tuple with $\rho' = \rho/k$, there exists a tuple with $(\rho')^* = \rho^*/k$, and there exists a tuple with $s' = s/k$. (These may be three different tuples.)
\end{lemma}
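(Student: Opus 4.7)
The plan is to adapt the proof of \Cref{power-lemma} (raising a Dirichlet polynomial to the $k$-th power) to the energy setting: start from a pattern realising $(\sigma,\tau,\rho,\rho^*,s)$, convolve the coefficients with themselves $k$ times, subdivide into $k$ dyadic blocks, and then, for each of the three coordinates separately, pigeonhole on whichever block or subset of $W$ makes the desired quantity as large as possible.

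Concretely, take a large value pattern $(N, T, V, (a_n)_{n \in [N,2N]}, J, W)$ realising the given tuple, and define the $k$-fold Dirichlet convolution $b_m := \sum_{n_1 \cdots n_k = m} a_{n_1} \cdots a_{n_k}$ on $m \in [N^k, 2^k N^k]$. Divisor bounds give $|b_m| \ll N^{o(1)}$, and the identity $\sum_m b_m m^{-it} = (\sum_n a_n n^{-it})^k$ transfers the lower bound $|\sum_m b_m m^{-it}| \gg V^k$ to $t \in W$. Subdivide $[N^k, 2^k N^k]$ into the $k$ dyadic intervals $I_j := [2^{j-1} N^k, 2^j N^k]$; for each $t \in W$ let $j(t)$ be an index for which $I_{j(t)}$ contributes at least a fraction $1/k$ of the full sum, and set $W_j := \{t \in W : j(t) = j\}$. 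After normalising $b_m$ by its $N^{o(1)}$ bound and restricting to $I_j$, each non-empty $W_j$ yields a valid large value pattern at scale $M_j := 2^{j-1} N^k$ realising some tuple $(\sigma, \tau/k, \rho_j, \rho^*_j, s_j) \in \Energy$, with each of $\rho_j, \rho^*_j, s_j$ controlled above by its counterpart for $W$ thanks to the inclusion $W_j \subseteq W$ and the identification $N^k = M_j^{1+o(1)}$.

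For the tuple achieving $\rho' = \rho/k$, pick $j_1$ maximising $|W_j|$; since $|W_{j_1}| \geq |W|/k$ we obtain $\rho_{j_1} = \rho/k$ at scale $M_{j_1}$. For equality in $(\rho')^*$, apply the quasi-triangle inequality \eqref{energy-triangle} to $W = \bigsqcup_j W_j$ to find $j_2$ with $E_1(W_{j_2}) \gg E_1(W)/k^4$, which forces $\rho^*_{j_2} = \rho^*/k$. For equality in $s'$, apply the analogous triangle inequality for $S(M_{j_3}, \cdot)^{1/2}$ to extract $j_3$ with $S(M_{j_3}, W_{j_3}) \gg S(M_{j_3}, W)/k^2$, and pass to a subsequence along which $S(M_{j_3}, W)$ matches the rescaled target $N^{s+o(1)} = M_{j_3}^{s/k+o(1)}$. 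The main obstacle is the ``technical issue'' flagged in the lemma statement: in each of the three cases one must verify that the remaining two inequalities among $\rho' \leq \rho/k$, $(\rho')^* \leq \rho^*/k$, $s' \leq s/k$ continue to hold, even though the additive energy and the double zeta sum can drop unexpectedly when restricting from $W$ to $W_{j_c}$. This is handled by a further refinement to a subsequence along which all five exponents of the constructed sub-pattern stabilise to well-defined fixed limits, so that the three inequalities can then be verified at the level of these limits before comparison with $\rho/k$, $\rho^*/k$, and $s/k$.
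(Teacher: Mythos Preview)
Your approach is essentially the paper's: raise the Dirichlet polynomial to the $k$-th power, break $[N^k,2^kN^k]$ into $O(1)$ dyadic blocks, partition $W$ into corresponding pieces $W_j$, and then pigeonhole separately on each of the three statistics. Your arguments for $\rho'=\rho/k$ (cardinality) and $(\rho')^*=\rho^*/k$ (via the energy quasi-triangle inequality \eqref{energy-triangle}) are exactly what the paper does.

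The difficulty is in the $s$-coordinate. You assert that one can ``pass to a subsequence along which $S(M_{j_3},W)$ matches the rescaled target $N^{s+o(1)}$'', and earlier that each $s_j$ is ``controlled above by its counterpart for $W$ thanks to the inclusion $W_j\subseteq W$''. Neither step is justified: the hypothesis only controls $S(N,W)$ at the original scale $N$, whereas the new pattern requires $S(M_j,W_j)$ at scale $M_j\asymp N^k$, and these two double zeta sums are not related by monotonicity in $W$ or by passage to a subsequence. Indeed, the diagonal contribution alone gives $S(M_j,W_j)\geq |W_j|\,M_j^2$, so the new exponent obeys $s'\geq \rho'+2$; for the block with $\rho'=\rho/k$ this forces $s'\geq \rho/k+2$, which exceeds $s/k$ whenever $s<\rho+2k$ --- and the original pattern only guarantees $s\geq\rho+2$. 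Thus both the constraint $s'\leq s/k$ and the attained equality $s'=s/k$ fail as written. The paper's own proof is just as brief at this point and does not address the change of scale either, so the gap is inherited rather than introduced by you; but your explicit formulation makes visible that the $s$-claims need a separate argument (or a reformulation of the lemma) that neither proof supplies.
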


\begin{proof} By definition, there exists a large value pattern $(N,T,V,(a_n)_{n \in [N,2N]},J,W)$ with $N > 1$ unbounded, $T = N^{\tau+o(1)}$, $V = N^{\sigma+o(1)}$, $|W| = N^{\rho+o(1)}$, $E_1(W) = N^{\rho^*+o(1)}$, and $S(N,W) = N^{s+o(1)}$.  Observe that
    $$ \left( \sum_{n \in [N,2N]} a_n n^{-it} \right)^k = \sum_{n \in [N^k, 2^k N^k]} b_n n^{-it}$$
for some coefficients $b_n = O(n^{o(1)})$.  In particular, partitioning $[N^k, 2^k N^k]$ into $O(1)$ sub-intervals $[N',2N']$ with $N' = N^{k+o(1)}$, we can partition $W$ into $O(1)$ subcollections $W_{N'}$, such that
$$ \left| \sum_{n \in [N',2N']} b_n n^{-it} \right| \gg V^k = (N')^{\sigma+o(1)}$$
for all $t \in W_{N'}$.  Again by the pigeonhole principle, one of the $W_{N'}$ must have cardinality $N^{\rho+o(1)}$, one must have energy $N^{\rho^*+o(1)}$, and one must have double zeta sum $N^{s+o(1)}$ (but these may be different $W_{N'}$).  Each of these $W_{N'}$ then give the different conclusions to the lemma.
\end{proof}

Now we turn to the estimation of additive energy exponent $\A^*(\sigma)$.  Besides the trivial upper bound $\A^*(\sigma) \leq 3 \A(\sigma)$, the best upper bounds for $\A^*(\sigma)$ in the literature remain that of Heath--Brown \cite{heathbrown_zero_1979}, who showed for any fixed $1/2 \leq \sigma \leq 1$ that $\A^*(\sigma)$ could be bounded by
\begin{align*}
        \frac{10-11\sigma}{(2-\sigma)(1-\sigma)} & \hbox{ for } 1/2 \leq \sigma \leq 2/3;\\
        \frac{18-19\sigma}{(4-2\sigma)(1-\sigma)} & \hbox{ for } 2/3 \leq \sigma \leq 3/4;\\
        \frac{12}{4\sigma-1} & \hbox{ for } 3/4 \leq \sigma \leq 1.
\end{align*}
A key input in the analysis was the following relation between additive energy and size of a large value pattern:

\begin{theorem}[Heath-Brown relation]\label{hbt}\cite{heathbrown_zero_1979} If $(\sigma,\tau,\rho,\rho^*,s) \in \Energy$, then one has
    $$ \rho^* \leq 1-2\sigma + \frac{1}{2}\max\left(\rho+1, 2\rho, \frac{5}{4}\rho+\frac{\tau}{2}\right) + \frac{1}{2}\max\left(\rho^*+1, 4\rho, \frac{3}{4}\rho^*+\rho+\frac{\tau}{2}\right).$$
    \end{theorem}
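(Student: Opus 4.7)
Let $(N,T,V,(a_n)_{n \in [N,2N]},J,W)$ be a large value pattern realizing $(\sigma,\tau,\rho,\rho^*,s) \in \Energy$, and write $D(t) := \sum_{n \in [N,2N]} a_n n^{-it}$, so that $|D(t)| \geq V$ on $W$. The overall plan is to raise the large-value inequality to the fourth power across each additive-energy quadruple, apply two Cauchy--Schwarz inequalities to reduce the resulting sum to a product of two double zeta-type sums (one over $W$ itself, one over the sumset $W+W$ weighted by its additive representation function), and then bound each factor by a Heath--Brown-style estimate.

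Concretely, since $V^4 \leq |D(t_1)D(t_2)D(t_3)D(t_4)|$ whenever $(t_1,t_2,t_3,t_4) \in W^4$ satisfies $|t_1+t_2-t_3-t_4| \leq 1$, summing over the $E_1(W)$ such quadruples gives
\[
V^4 E_1(W) \leq \sum_{|t_1+t_2-t_3-t_4|\leq 1} |D(t_1)D(t_2)D(t_3)D(t_4)|.
\]
A first Cauchy--Schwarz splitting the quadruple into the pairs $(t_1,t_2)$ and $(t_3,t_4)$ joined by $t_1+t_2 \equiv t_3+t_4$ collapses this to $\sum_{t_1,t_2 \in W} |D(t_1)|^2 |D(t_2)|^2\, r(t_1+t_2)$, where $r(u) := |\{(t,t') \in W^2 : |t+t'-u|\leq 1\}|$ satisfies $\sum_u r(u) = |W|^2$ and $\sum_u r(u)^2 = E_1(W)$. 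A second Cauchy--Schwarz, after expanding $|D(t)|^2 = \sum_{n,m} a_n \bar a_m (n/m)^{-it}$ and separating the coefficient variables from the frequency variables, produces a bound of the form
\[
V^4 E_1(W)^2 \ll N^{o(1)}\, S(N,W) \cdot T_*(N,W),
\]
where $T_*(N,W) := \sum_{u_1,u_2} r(u_1) r(u_2) \bigl|\sum_{n \in [N,2N]} n^{-i(u_1-u_2)}\bigr|^2$ is a weighted double zeta sum over the sumset $W+W$ with the additive multiplicities $r(u)$ as weights.

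The first factor is estimated by the classical Heath--Brown double zeta sum bound (\cite[Theorem 1]{heathbrown_large_1979}; see also \cite[Lemma 11.5]{ivic})
\[
S(N,W) \ll \bigl(N^2 |W| + N|W|^2 + NT^{1/2}|W|^{5/4}\bigr) N^{o(1)},
\]
contributing the exponent $1 + \max(\rho+1,\, 2\rho,\, \tfrac{5\rho}{4}+\tfrac{\tau}{2})$. The second factor admits an analogous estimate with the cardinality $|W|$ replaced throughout by appropriate moments of $r$: the diagonal term becomes $N^2 \sum_u r(u)^2 = N^2 E_1(W)$; the bulk off-diagonal term becomes $N(\sum_u r(u))^2 = N|W|^4$; and the van der Corput term, after a H\"older interpolation $(\sum r)^{1/2}(\sum r^2)^{3/4} = |W| E_1(W)^{3/4}$, becomes $NT^{1/2}|W|E_1(W)^{3/4}$. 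The resulting bound $T_*(N,W) \ll N^{1 + \max(\rho^*+1,\, 4\rho,\, \tfrac{3\rho^*}{4}+\rho+\tfrac{\tau}{2}) + o(1)}$, substituted into the preceding display and then passed to logarithms, yields the claimed inequality after dividing by two.

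The principal technical challenge lies in the weighted Heath--Brown estimate for $T_*(N,W)$: establishing the third term $NT^{1/2}|W|E_1(W)^{3/4}$ requires a careful analysis of the contribution of close pairs $(u_1,u_2)$ in the weighted sumset $W+W$, with the H\"older interpolation mixing $\sum r$ and $\sum r^2$ at the exponents $\tfrac{1}{2}$ and $\tfrac{3}{4}$ replacing the unweighted van der Corput exponent $|W|^{5/4}$. This interpolation is what produces the characteristic self-referential appearance of $\rho^*$ on the right-hand side, making the conclusion a functional inequality in $\rho^*$ which must be solved case-by-case according to which branch of each maximum is dominant.
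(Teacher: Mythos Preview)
Your proposal and the paper's proof end at the same place: the inequality
\[
E_1(W)\ \ll\ T^{o(1)} N^{1-2\sigma}\,\bigl(|W|N+|W|^2+|W|^{5/4}T^{1/2}\bigr)^{1/2}\bigl(E_1(W)N+|W|^4+E_1(W)^{3/4}|W|\,T^{1/2}\bigr)^{1/2},
\]
which upon taking logarithms gives the stated bound on $\rho^*$. The paper simply \emph{quotes} this as \cite[(33)]{heathbrown_zero_1979} and compares exponents; you instead sketch a derivation of it. So your plan is strictly more ambitious than what the paper does, and it is in the right spirit: the two main ingredients---the Heath--Brown double zeta bound on $S(N,W)$ for the first factor, and a weighted analogue on the sumset $W+W$ with multiplicities $r(u)$ for the second, with the H\"older interpolation $(\sum r)^{1/2}(\sum r^2)^{3/4}=|W|\,E_1(W)^{3/4}$ producing the self-referential third term---are exactly what drives Heath--Brown's argument.

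One point to tighten. Your first Cauchy--Schwarz correctly gives
\[
V^4 E_1(W)\ \le\ \sum_{t_1,t_2\in W}|D(t_1)|^2|D(t_2)|^2\,r(t_1+t_2),
\]
but the passage from this to $V^4 E_1(W)^2 \ll S(N,W)\cdot T_*(N,W)$ by ``a second Cauchy--Schwarz separating coefficients from frequencies'' is not fully explained: a single Cauchy--Schwarz in the $(n,m)$ variables yields one weighted double zeta sum, not a clean product of $S(N,W)$ and $T_*(N,W)$, and the extra $E_1(W)$ on the left has to be accounted for. In Heath--Brown's original the bookkeeping is organized a bit differently (one works with $V^2 E_1(W)\le \sum_{t_1}|D(t_1)|^2\sum_{t_2}r(t_1+t_2)$ and then dualizes, which is where the weighted sum over the sumset appears). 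This is a presentational gap rather than a mathematical one: the endpoint inequality you state is correct and matches the cited source, and your identification of the interpolation responsible for the $\tfrac{3}{4}\rho^*+\rho+\tfrac{\tau}{2}$ term is on target.
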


\begin{proof} If $(\sigma,\tau,\rho,\rho^*,s)\in\Energy$ then by \Cref{lv-edef}, there exists a large value pattern $(N,T,V,(a_n)_{n \in [N,2N]},J,W)$ with $N>1$ unbounded, $V = N^{\sigma+o(1)}$, $T = N^{\tau+o(1)}$, $V = N^{\sigma+o(1)}$, $|W| = N^{\rho+o(1)}$, $E_1(W) = N^{\rho^*+o(1)}$ and $S(N,W) = N^{s+o(1)}$.  Applying \cite[(33)]{heathbrown_zero_1979} (and adjusting the notation appropriately), we conclude that
\begin{align*}
E_1(W) \ll T^\eps N^{1-2\sigma} &(|W|N + |W|^2 + |W|^{5/4} T^{1/2})^{1/2} \\
&\times (E_1(W)N + |W|^4 + E_1(W)^{3/4} N T^{1/2})^{1/2}.
\end{align*}
Comparing exponents, we obtain the claim.
\end{proof}

Given the work on large value estimates since the time of Heath-Brown's paper, it is not surprising that we are able to improve on these bounds in several ranges.  Our main result (\Cref{Add-est} below) is obtained using an optimisation routine that may be viewed as a higher-dimensional analogue of the method used to derive \Cref{beta-table}. Each theorem in the literature relating the quantities $\sigma$, $\tau$, $\rho$, $\rho^*$ and $s$, such as \Cref{hbt} or any large value estimate, is represented as a five-dimensional polytope containing feasible $(\sigma, \tau, \rho, \rho^*, s)$ tuples. The polytopes are combined by taking their intersection, then projected onto the $(\sigma, \tau, \rho^*)$ dimensions. For each fixed $\sigma$, an upper bound of $A^*(\sigma)(1-\sigma)$ may be calculated as $\sup \rho^*/\tau$ where the supremum is taken over all tuples in the polytope projection (with $\sigma$ fixed). 

This process generates a machine ``proof", which, while not certifiably correct due to issues of code fidelity, provides a blueprint for the rough steps required to arrive at a formal proof. The machine-generated proofs are then transcribed manually into human-readable proofs which are available in the ANTEDB. 

\begin{theorem}[New additive energy estimates]\label{Add-est}  Let $7/10 \leq \sigma < 1$ be fixed.
    \begin{itemize}
    \item[(i)] For $3/4 \le \sigma \le 5/6$ one has
    \[
    \A^*(\sigma)(1-\sigma) \le \max\left(\frac{18 - 19\sigma}{2(3\sigma - 1)}, \frac{4(10 - 9\sigma)}{5(4\sigma - 1)}\right).
    \]
    \item[(ii)] For $7/10 \le \sigma \le 3/4$, one has
    \[
    \A^*(\sigma)(1-\sigma) \le \max\left(\frac{5(18 - 19\sigma)}{2(5\sigma + 3)}, \frac{2(45 - 44\sigma)}{2\sigma + 15}\right).
    \]
    \item[(iii)] For $173/229 \le \sigma \le 443/586$, one has 
    \[
    \A^*(\sigma)(1-\sigma) \le \max\left(\frac{173 - 270\sigma}{16(93 - 125\sigma)}, \frac{653 - 890\sigma}{10(93 - 125\sigma)}, \frac{1151 - 1190\sigma}{20(15\sigma - 2)}\right).
    \]
    \item[(iv)] For $443/586 \le \sigma \le 373/493$, one has
    \[
    \A^*(\sigma)(1-\sigma) \le \max\left(\frac{593 - 810\sigma}{5(171 - 230\sigma)}, \frac{4(266 - 275\sigma)}{5(55\sigma - 7)}\right).
    \]
    \item[(v)] For $373/493 \le \sigma \le 103/136$, one has
    \[
    \A^*(\sigma)(1-\sigma) \le \max\left(\frac{533 - 730\sigma}{30(26 - 35\sigma)}, \frac{3(26 - 33\sigma)}{85\sigma - 62}, \frac{174 - 185\sigma}{31\sigma + 2}\right).
    \]
    \item[(vi)] For $103/136 \le \sigma \le 42/55$, one has
    \[
    \A^*(\sigma)(1-\sigma) \le \max\left(\frac{72 - 91\sigma}{7(11\sigma - 8)}, \frac{5(18 - 19\sigma)}{2(5\sigma + 3)}\right).
    \]
    \item[(vii)] For $42/55 \le \sigma \le 79/103$, one has
    \[
    \A^*(\sigma)(1-\sigma) \le \max\left(\frac{18 - 19\sigma}{6(15\sigma - 11)}, \frac{3(18-19\sigma)}{4(4\sigma-1)}\right).
    \]
    \item[(viii)] For $79/103 \le \sigma \le 84/109$, one has
    \[
    \A^*(\sigma)(1-\sigma) \le \max\left(\frac{18 - 19\sigma}{2(37\sigma - 27)}, \frac{5(18 - 19\sigma)}{2(13\sigma - 3)}\right).
    \]
    \item[(ix)] For $84/109 \le \sigma \le 5/6$, one has
    \[
    \A^*(\sigma)(1-\sigma) \le \max\left(\frac{18 - 19\sigma}{9(3\sigma - 2)}, \frac{4(10 - 9\sigma)}{5(4\sigma - 1)}\right).
    \]
    \end{itemize}
\end{theorem}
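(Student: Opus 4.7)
The plan is to apply \Cref{zeroe-large-cor-0} in each of the ranges (i)--(ix), reducing the estimation of $\A^*(\sigma)(1-\sigma)$ to controlling the supremum of $\LV^*(\sigma,\tau)/\tau$ on $[\tau_0, 2\tau_0]$ together with the supremum of $\LV^*_\zeta(\sigma,\tau)/\tau$ on $[2,\tau_0]$, for a choice of $\tau_0$ depending on $\sigma$. For the zeta-side term we combine the trivial relation $\LV^*_\zeta \leq 3\LV_\zeta$ from \eqref{lve-triv} with the twelfth moment bound \eqref{twelfth-bound} and the exponent-pair bound \eqref{lvz-340}. For several of the ranges the resulting choice of $\tau_0$ will be small enough that $[2,\tau_0]$ is empty, so only the $\LV^*$ contribution remains.

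The principal work lies in bounding $\LV^*$. Here we invoke the Heath--Brown relation \Cref{hbt}, which says that any $(\sigma,\tau,\rho,\rho^*,s) \in \Energy$ satisfies a polytope constraint linking $\rho^*$ to $\rho$, $\tau$, and $\sigma$. We pair this with a large value estimate $\rho \le \LV(\sigma,\tau)$ drawn from the toolkit of the previous section: \Cref{l2-mvt}, the Huxley bound \eqref{huxley-lvt}, the Heath--Brown bound \eqref{hb-opt}, the Jutila bound \eqref{jutila-lvt}, the Guth--Maynard bound \eqref{guth-maynard-lvt}, and the Bourgain large values theorem \Cref{bourgain-lvt}. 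Each such pairing produces a finite list of linear inequalities in $(\sigma,\tau,\rho,\rho^*)$; solving the resulting linear program in $\rho^*$ yields an upper bound $\rho^* \le c_0(\sigma) + c_1(\sigma)\tau$, whose supremum over $\tau \in [\tau_0, 2\tau_0]$ is attained at an endpoint or at an interior kink. Where necessary, \Cref{power-energy} is applied to extend the admissible range of $\tau$ by raising a Dirichlet polynomial to a $k$-th power, converting a bound at scale $\tau$ into one at scale $k\tau$.

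The nine cases correspond to distinct optimal pairings of $(\tau_0, \text{LV input})$ combined with the selection of which branch in \Cref{hbt} is active. Broadly, denominators involving $4\sigma-1$ descend from the sixth-power/Heath--Brown branch, denominators involving $5\sigma+3$ or $55\sigma-7$ descend from the Guth--Maynard input, and numerators of the form $18-19\sigma$ or $10-9\sigma$ encode the Heath--Brown quadratic-in-$\rho^*$ branch. In each case the proof reduces to (a) recording the choice of $\tau_0$ and the active $\LV$ estimate, (b) identifying which branches of the maxima in \Cref{hbt} are saturated, (c) verifying that the closed-form value of $\rho^*/\tau$ at this extremal configuration matches the stated bound, and (d) checking that the $\LV^*_\zeta$ supremum is dominated by this value (using the twelfth moment and \eqref{lvz-340} inputs).

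The main obstacle is the case analysis itself: \Cref{hbt} contributes a maximum over three by three combinations of branches, each of the large value inputs contributes its own piecewise structure, and the optimal $\tau_0$ varies discontinuously across the nine $\sigma$ subranges as different extremal vertices of the projected polytope take over. Identifying the correct vertex in each subrange is precisely the combinatorial problem that the authors delegated to the polytope-intersection routine described in the paragraph preceding the theorem; once the vertex is in hand, the verification of each individual inequality is elementary algebra. I would write up case (i) in full as a benchmark (where one of the two bounds recovers the classical Heath--Brown value $12/(4\sigma-1)$ on part of the interval), and then treat cases (ii)--(ix) by tabulating for each $\sigma$-range the choice of $\tau_0$, the active $\LV$ estimate, and the active branches of \Cref{hbt}.
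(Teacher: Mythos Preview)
Your overall architecture (apply \Cref{zeroe-large-cor-0}, feed large value bounds on $\rho$ into \Cref{hbt}, and use \Cref{power-energy} to cover $[\tau_0,2\tau_0]$) matches the paper's, but there is a genuine gap in how you treat the zeta side. You propose to control $\LV^*_\zeta$ via the trivial inequality $\LV^*_\zeta \le 3\LV_\zeta$ together with \eqref{twelfth-bound}. In case (i) this is too weak: with $\tau_0=8\sigma-4$ (the choice forced by the $\Energy$ analysis), the supremum of $3\LV_\zeta(\sigma,\tau)/\tau$ over $2\le\tau\le\tau_0$ is attained at $\tau=4\sigma-1$ and equals $12(1-\sigma)/(4\sigma-1)$, which is exactly the classical Heath--Brown value and is \emph{strictly larger} than the claimed term $4(10-9\sigma)/(5(4\sigma-1))$ throughout $3/4<\sigma<5/6$. (Your parenthetical remark that (i) ``recovers the classical Heath--Brown value $12/(4\sigma-1)$ on part of the interval'' is therefore off: (i) is a genuine improvement over Heath--Brown on the whole open interval.)

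What the paper actually does for $\Energy_\zeta$ is apply \Cref{hbt} \emph{again}, this time with zeta-specific inputs for $\rho$: the twelfth-moment bound $\rho\le 2\tau-12(\sigma-1/2)$ from \eqref{twelfth-bound}, and the squared Huxley bound $\rho\le 4-4\sigma$. Running \Cref{hbt} with these $\rho$-bounds (rather than crudely tripling $\rho$) is what produces the sharper zeta contribution $\max\bigl(\tfrac{45-46\sigma}{4(4\sigma-1)},\tfrac{4(10-9\sigma)}{5(4\sigma-1)}\bigr)$. So you need to use \Cref{hbt} on \emph{both} sides, not just the $\Energy$ side; the zeta moment bounds enter as constraints on $\rho$ inside \Cref{hbt}, not as bounds on $\rho^*$ directly.
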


\begin{proof}  We just describe the proof of (i) here; the proofs of the remaining claims use similar methods (as well as inputs such as the Guth--Maynard estimates in \cite{guth-maynard}) and can be found in the ANTEDB.

Throughout assume that $3/4 \le \sigma \le 5/6$. Choose
\[
\tau_0 = 8\sigma - 4.
\]
We will show that
\begin{equation}\label{imphb-lver-ineq}
\rho^*/\tau \le \begin{cases}
\dfrac{18 - 19\sigma}{2(3\sigma - 1)},&3/4 \le \sigma < 4/5,\\
\dfrac{7(1 - \sigma)}{3\sigma - 1},&4/5 \le \sigma \le 5/6,
\end{cases}
\end{equation}
for all $(\sigma, \tau, \rho, \rho^*, s) \in \Energy$ for which $\tau_0 \le \tau \le 2\tau_0$, and that
\begin{equation}\label{imphb-zlver-ineq}
\rho^*/\tau \le \max\left(\frac{45 - 46\sigma}{4(4\sigma - 1)}, \frac{4(10 - 9\sigma)}{5(4\sigma - 1)}\right),
\end{equation}
for all $(\sigma, \tau, \rho, \rho^*, s) \in \Energy_\zeta$ such that $2 \le \tau \le \tau_0$. The desired result (i) then follows from Corollary \ref{zeroe-large-cor-0} and computing the piecewise maximum of \eqref{imphb-lver-ineq} and \eqref{imphb-zlver-ineq}.

First, consider \eqref{imphb-zlver-ineq}. Suppose that $(\sigma, \tau, \rho, \rho^*, s)\in \Energy_\zeta$ with $3/4 \le \sigma \le 5/6$ and $2 \le \tau \le \tau_0$. Then, from \eqref{twelfth-bound} we have
\begin{equation}\label{hb-lv-rho-form}
\rho \le 2\tau - 12(\sigma - 1/2).
\end{equation}
Furthermore, by \eqref{huxley-lvt} and \Cref{power-lemma} with $k = 2$, one has $\rho \le 2\max(2 - 2\sigma, 4 - 6\sigma + \tau/2)$. However since $\tau \le \tau_0 = 8\sigma - 4$, this simplifies to
\begin{equation}
\label{huxley-lv-rho-form2}
\rho \le 4 - 4\sigma.
\end{equation}
Since $\sigma \ge 3/4$, this also implies that $\rho \le 1$. For future reference we also note that
\begin{equation}\label{zlver:tau-gradient-1}
\frac{4}{5} < \max\left(\frac{45 - 46\sigma}{4(4\sigma - 1)}, \frac{4(10 - 9\sigma)}{5(4\sigma - 1)}\right) < 2,\qquad (3/4 \le \sigma \le 5/6).
\end{equation}
By \Cref{hbt}, one has
\[
\rho^* \leq 1-2\sigma + \frac{1}{2}\max\left(\rho+1, 2\rho, \frac{5}{4}\rho+\frac{\tau}{2}\right) + \frac{1}{2}\max\left(\rho^*+1, 4\rho, \frac{3}{4}\rho^*+\rho+\frac{\tau}{2}\right).
\]
Since $\rho \le 1$, one has $\rho + 1 \ge 2\rho$. Thus the middle term in the first maximum may be omitted, and we are left with two cases to consider.

\textbf{Case 1:} If $\rho + 1 \ge 5\rho/4 + \tau/2$ then
\[
\rho^* \le 1 -2\sigma + \frac{\rho + 1}{2} + \frac{1}{2}\max\left(\rho^* + 1, 4\rho, \frac{3}{4}\rho^*+\rho +\frac{\tau}{2}\right).
\]
Solving for $\rho^*$ gives
\[
\rho^* \le \max\left(4 - 4\sigma + \rho, \frac{3}{2} - 2\sigma + \frac{5}{2}\rho, \frac{2}{5}(6 - 8\sigma + \tau + 4\rho)\right).
\]
Applying \eqref{huxley-lv-rho-form2} to each term,
\begin{align*}
\rho^* &\le \max\left(8 - 8\sigma, \frac{23}{2} - 12\sigma, \frac{2}{5}(22 - 24\sigma + \tau)\right) \\
&\le \max\left(\frac{45 - 46\sigma}{4(4\sigma - 1)}\tau, \frac{4(10 - 9\sigma)}{5(4\sigma - 1)}\tau\right),
\end{align*}
i.e.\ \eqref{imphb-zlver-ineq} holds. The last inequality may be verified by inspecting the growth rates with respect to $\tau$ of each term (using \eqref{zlver:tau-gradient-1}), and checking that the desired inequality holds at $\tau = 2$.

\textbf{Case 2:} If $\rho + 1 < 5\rho/4 + \tau/2$, then
\[
\rho^* \le 1 - 2\sigma + \frac{5}{8}\rho + \frac{\tau}{4} + \frac{1}{2}\max\left(\rho^* + 1, 4\rho, \frac{3}{4}\rho^*+\rho +\frac{\tau}{2}\right).
\]
Solving for $\rho$ gives
\[
\rho^* \le \max\left(3 - 4\sigma + \frac{\tau}{2} + \frac{5}{4}\rho, 1 - 2\sigma + \frac{\tau}{4} + \frac{21}{8}\rho, \frac{8 - 16\sigma + 4\tau + 9\rho}{5}\right).
\]
If $\tau \ge 4\sigma - 1$, then apply \eqref{huxley-lv-rho-form2} termwise to get
\begin{align*}
\rho^* &\le \max\left(8 - 9\sigma + \frac{\tau}{2}, \frac{23}{2} - \frac{25}{2} + \frac{\tau}{4}, \frac{4}{5}(11 - 13 \sigma + \tau)\right)\\
&\le \max\left(\frac{45 - 46\sigma}{4(4\sigma - 1)}\tau, \frac{4(10 - 9\sigma)}{5(4\sigma - 1)}\tau\right),
\end{align*}
since the RHS is increasing faster in $\tau$ and at $\tau = 4\sigma - 1$ we have equality.

On the other hand if $\tau < 4\sigma - 1$ then we apply \eqref{hb-lv-rho-form} termwise to get
\begin{align*}
    \rho^* &\le \max\left(\frac{21}{2} - 19\sigma + 3\tau, \frac{67 -134\sigma + 22\tau}{4}, \frac{2}{5}(31 - 62\sigma + 11\tau)\right)\\
    &\le \max\left(\frac{45 - 46\sigma}{4(4\sigma - 1)}\tau, \frac{4(10 - 9\sigma)}{5(4\sigma - 1)}\tau\right),
\end{align*}
by inspecting growth rates in $\tau$ and noting that at $\tau = 4\sigma - 1$ one has equality. 

Thus we have shown that if $(\sigma, \tau, \rho, \rho^*, s) \in \Energy_\zeta$ with $3/4 \le \sigma \le 5/6$ and $2 \le \tau \le 8\sigma - 4$, then
\[
\rho^*/\tau \le \min\left(\frac{45 - 46\sigma}{4(4\sigma - 1)}, \frac{4(10 - 9\sigma)}{5(4\sigma - 1)}\right),
\]
which is \eqref{imphb-zlver-ineq}.

Now consider \eqref{imphb-lver-ineq}. Suppose that $\tau_0 \le \tau \le 2\tau_0$ and $(\sigma, \tau, \rho, \rho^*, s) \in \Energy$. Note that the interval $[\tau_0, 2\tau_0]$ is covered by intervals $I_k := [(4\sigma - 2)k, (4\sigma - 2)(k + 1)]$ with $k = 2, 3$. Suppose that $\tau \in I_k$, and write 
\[
\tau' := \tau/k.
\]
Then, by \eqref{huxley-lvt} and $\tau' \ge 4\sigma - 2$ one has
\[
\rho/k \le \max(2 - 2\sigma, 4 - 6\sigma + \tau') = 4 - 6\sigma + \tau'.
\]
Also, from \eqref{huxley-lvt} and $\tau \le (4\sigma - 2)(k + 1)$ one has
\[
\rho/(k + 1) \le \max(2 - 2\sigma, 4 - 6\sigma + \tau/(k + 1)) \le 2 - 2\sigma
\]
so that for $k = 2,3$ one has $\rho/k \le (2 - 2\sigma)(k + 1)/k \le 3 - 3\sigma$.
In summary,
\begin{equation}\label{ze-ihb-rho-bound-k}
\rho/k \le \min(3 - 3\sigma, 4 - 6\sigma + \tau').
\end{equation}
Next, by \Cref{power-energy},
\[
(\sigma, \tau', \rho', \rho^*/k, s') \in \Energy
\]
for $\tau' := \tau/k$ and some $\rho' \le \rho/k$ and $s' \le s/k$.

Applying \Cref{hbt} to this tuple, then applying $\rho' \le \rho/k$, one has
\begin{align*}
\frac{\rho^*}{k} \leq 1-2\sigma &+ \frac{1}{2}\max\left(\frac{\rho}{k}+1, \frac{2\rho}{k}, \frac{5\rho}{4k} + \frac{\tau'}{2}\right) \\
&+ \frac{1}{2}\max\left(\frac{\rho^*}{k}+1, \frac{4\rho}{k}, \frac{3}{4}\frac{\rho^*}{k} +\frac{\rho}{k}+\frac{\tau'}{2}\right).
\end{align*}
By \eqref{ze-ihb-rho-bound-k} one has $\rho/k \le 1$ (since $\sigma \ge 3/4$) so there are only two cases to consider:

\textbf{Case 1:} $\rho/k + 1 \ge 5\rho/(4k) + \tau'/2$ then
\[
\frac{\rho^*}{k} \le \frac{3}{2} - 2\sigma + \frac{\rho}{2k} + \frac{1}{2}\max\left(\frac{\rho^*}{k}+1, \frac{4\rho}{k}, \frac{3}{4}\frac{\rho^*}{k} +\frac{\rho}{k}+\frac{\tau'}{2}\right).
\]
Solving for $\rho^*/k$, we get
\[
\frac{\rho^*}{k} \le \max\left(4(1 - \sigma) + \frac{\rho}{k}, \frac{(3 - 4\sigma) + 5\rho/k}{2}, \frac{2}{5}((6 - 8\sigma) + \tau' + \frac{4\rho}{k})\right).
\]
If $\tau' \ge 3\sigma - 1$ then \eqref{ze-ihb-rho-bound-k} reduces to $\rho/k \le 3-3\sigma$. Substituting this bound gives
\[
\rho^*/k \le \max(7 - 7\sigma, 9 - 19\sigma/2, 32(1 - \sigma)/5).
\]
However the RHS is bounded by 
\begin{equation}\label{ze-ihb-rho-1}
\frac{18 - 19\sigma}{6\sigma - 2}\tau'
\end{equation}
for all $\tau' \ge 3\sigma - 1$ since the desired bound holds at $\tau' = 3\sigma - 1$ (where one has equality).

On the other hand, if $4\sigma - 2 \le \tau' \le 3\sigma - 1$ then \eqref{ze-ihb-rho-bound-k} reduces to $\rho/k \le 4 - 6\sigma + \tau'$. Substituting this bound gives
\begin{equation}
\begin{split}
\rho^*/k &\le \max\left(8 -10\sigma + \tau',\frac{23 - 34\sigma + 5\tau'}{2}, \frac{2}{5}(22 - 32\sigma +  5\tau')\right)\\
&\le \frac{18 - 19\sigma}{6\sigma - 2}\tau'\label{ze-ihb-rho-2}
\end{split}
\end{equation}
where the last inequality may be established by checking that it holds at both $\tau' = 4\sigma - 2$ and $\tau' = 3\sigma - 1$ (where one has equality). To summarise, by taking $k = 2, 3$ in the \eqref{ze-ihb-rho-1} and \eqref{ze-ihb-rho-2}, one has 
\[
\rho^* \le \frac{18 - 19\sigma}{6\sigma - 2}\tau,\qquad (\tau_0 \le \tau \le 2\tau_0)
\]
in this case. 

\textbf{Case 2:} $\rho/k + 1 < 5\rho/(4k) + \tau'/2$ then
\[
\rho^*/k \leq 1-2\sigma + \frac{5\rho}{8k} + \frac{\tau'}{4} + \frac{1}{2}\max\left(\frac{\rho^*}{k}+1, \frac{4\rho}{k}, \frac{3}{4}\frac{\rho^*}{k} + \frac{\rho}{k}+\frac{\tau'}{2}\right).
\]
Solving for $\rho^*/k$ gives
\[
\rho^*/k \le \max\left(3 - 4\sigma + \frac{\tau'}{2} + \frac{5}{4}\frac{\rho}{k}, 1 - 2\sigma + \frac{\tau'}{4} + \frac{21}{8}\frac{\rho}{k}, \frac{1}{5}(8 - 16\sigma + 4\tau' + 9\frac{\rho}{k})\right).
\]
Proceeding as before, if $\tau' \ge 3\sigma - 1$ then \eqref{ze-ihb-rho-bound-k} becomes $\rho/k \le 3 - 3\sigma$, and substituting gives
\begin{align*}
\rho^*/k \le \max\left(\frac{27-31\sigma}{4} + \frac{\tau'}{2}, \frac{71-79\sigma}{8}+\frac{\tau'}{4}, 7-\frac{43}{5}\sigma + \frac{4}{5}\tau'\right).
\end{align*}
One may verify that the RHS is bounded by 
\[
\frac{18 - 19\sigma}{6\sigma - 2}\tau'
\]
(with some room to spare) by checking at the endpoint $\tau' = 3\sigma - 1$. 

Similarly, if $4\sigma - 2 \le \tau' \le 3\sigma - 1$ then using $\rho/k \le 4 - 6\sigma + \tau'$ from \eqref{ze-ihb-rho-bound-k} one has
\[
\rho^*/k \le \max\left(8 - \frac{23\sigma}{2} + \frac{7\tau'}{4}, \frac{23}{2} - \frac{71\sigma}{4} + \frac{23\tau'}{8}, \frac{44 - 70\sigma + 13\tau'}{5}\right).
\]
One can check that the RHS is bounded by 
\[
\frac{18 - 19\sigma}{6\sigma - 2}\tau'
\]
by checking the required inequalities hold at $\tau' = 4\sigma - 2$ and $\tau' = 3\sigma - 1$ (in each case, with some room to spare). 

Combining all the cases, by taking $k= 2,3$ we have shown that for $3/4 \le \sigma \le 4/5$ and $\tau_0 \le \tau \le 2\tau_0$ one has
\[
\rho^* \le \frac{18 - 19\sigma}{6\sigma - 2}\tau
\]
which is the first part of \eqref{imphb-lver-ineq}.

The case for $4/5 \le \sigma \le 5/6$ may be treated similarly. Here one needs to verify that
\[
\max(7 - 7\sigma, 9 - 19\sigma/2, 32(1 - \sigma)/5) \le \frac{7(1 - \sigma)}{3\sigma - 1}\tau',
\]
\[
\max\left(\frac{27-31\sigma}{4} + \frac{\tau'}{2}, \frac{71-79\sigma}{8}+\frac{\tau'}{4}, 7-\frac{43}{5}\sigma + \frac{4}{5}\tau'\right) \le \frac{7(1 - \sigma)}{3\sigma - 1}\tau'
\]
for $\tau' \ge 3\sigma - 1$, and that
\[
\max\left(8 -10\sigma + \tau',\frac{23 - 34\sigma + 5\tau'}{2}, \frac{2}{5}(22 - 32\sigma +  5\tau')\right) \le \frac{7(1 - \sigma)}{3\sigma - 1}\tau',
\]
\[
\max\left(8 - \frac{23\sigma}{2} + \frac{7\tau'}{4}, \frac{23}{2} - \frac{71\sigma}{4} + \frac{23\tau'}{8}, \frac{44 - 70\sigma + 13\tau'}{5}\right) \le \frac{7(1 - \sigma)}{3\sigma - 1}\tau'
\]
for $4\sigma - 2 \le \tau' \le 3\sigma - 1$. The treatment is analogous to before, so we omit the proof.
\end{proof}

Further additive energy estimates are recorded in the ANTEDB. \Cref{fig:additive-energy-estimate} displays currently known upper bounds on $\A^*(\sigma)$.

\begin{figure}
\centering
\includegraphics[width=10cm]{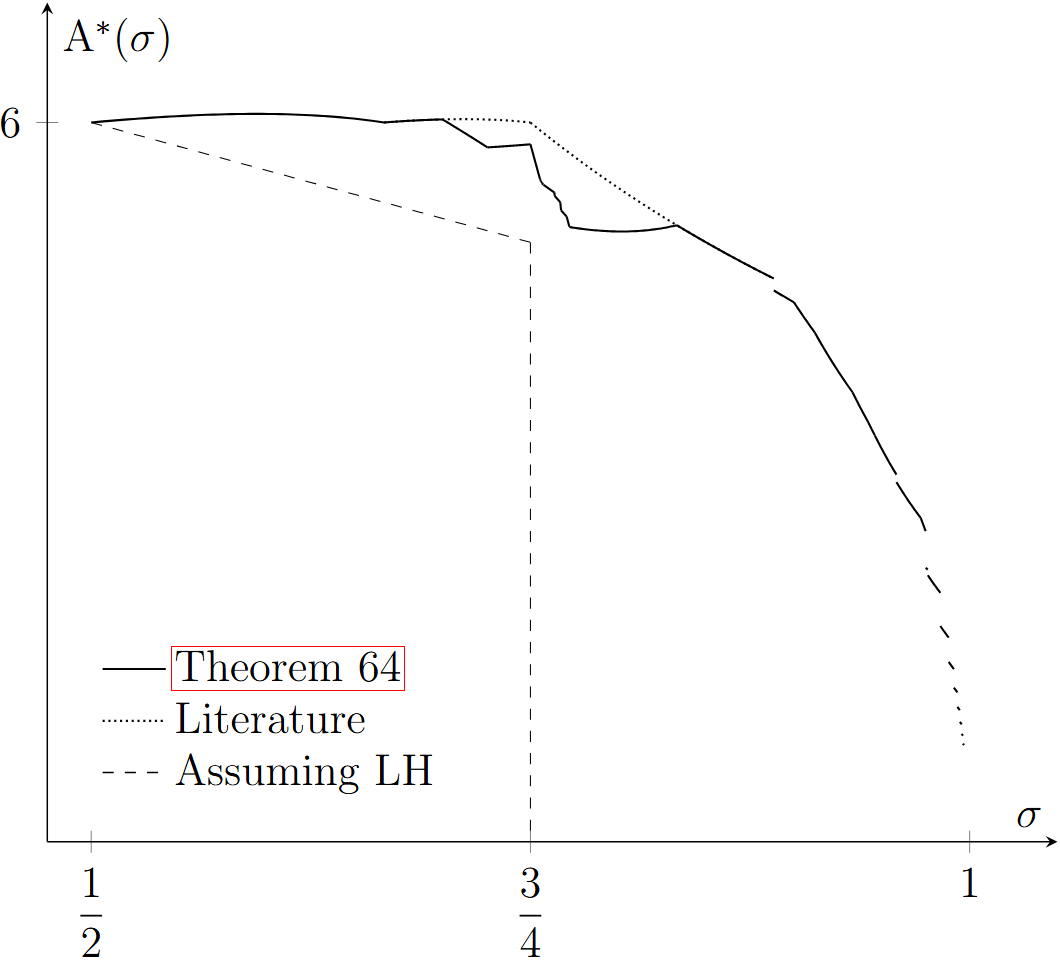}
\caption{Plot of bounds on $\A^*(\sigma)$ in \Cref{Add-est}, against existing literature bounds.}
\label{fig:additive-energy-estimate}
\end{figure}



\end{document}